\title[Hilbert modular varieties]
{On the cohomology groups of local systems over Hilbert modular
varieties via Higgs bundles}
\author[Stefan M\"{u}ller-Stach]{Stefan M\"{u}ller-Stach}
\address{Universit\"at Mainz,
Fachbereich 17, Mathematik, 55099 Mainz, Germany}
\email{stach@uni-mainz.de}
\author[Mao Sheng]{Mao Sheng}
\address{School of Mathematical Sciences,
University of Science and Technology of China, Hefei, 230026, China}
\email{msheng@ustc.edu.cn}
\author[Xuanming Ye]{Xuanming Ye}
\address{School of Mathematics and Computational Science,
Sun Yat-sen University, 510275 Guangzhou, P.R.
China}
\email{yexm3@mail.sysu.edu.cn}
\author[Kang Zuo]{Kang Zuo}
\address{Universit\"at Mainz,
Fachbereich 17, Mathematik, 55099 Mainz,
Germany}\email{zuok@uni-mainz.de}
\begin{document}
\theoremstyle{plain}
\newtheorem{thm}{Theorem}[section]
\newtheorem{theorem}[thm]{Theorem}
\newtheorem{lemma}[thm]{Lemma}
\newtheorem{corollary}[thm]{Corollary}
\newtheorem{proposition}[thm]{Proposition}
\newtheorem{addendum}[thm]{Addendum}
\newtheorem{variant}[thm]{Variant}
\theoremstyle{definition}
\newtheorem{construction}[thm]{Construction}
\newtheorem{notations}[thm]{Notations}
\newtheorem{question}[thm]{Question}
\newtheorem{problem}[thm]{Problem}
\newtheorem{remark}[thm]{Remark}
\newtheorem{remarks}[thm]{Remarks}
\newtheorem{definition}[thm]{Definition}
\newtheorem{claim}[thm]{Claim}
\newtheorem{assumption}[thm]{Assumption}
\newtheorem{assumptions}[thm]{Assumptions}
\newtheorem{properties}[thm]{Properties}
\newtheorem{example}[thm]{Example}
\newtheorem{conjecture}[thm]{Conjecture}
\numberwithin{equation}{thm}

\newcommand {\dt} {\bullet}
\newcommand{\sA}{{\mathcal A}}
\newcommand{\sB}{{\mathcal B}}
\newcommand{\sC}{{\mathcal C}}
\newcommand{\sD}{{\mathcal D}}
\newcommand{\sE}{{\mathcal E}}
\newcommand{\sF}{{\mathcal F}}
\newcommand{\sG}{{\mathcal G}}
\newcommand{\sH}{{\mathcal H}}
\newcommand{\sI}{{\mathcal I}}
\newcommand{\sJ}{{\mathcal J}}
\newcommand{\sK}{{\mathcal K}}
\newcommand{\sL}{{\mathcal L}}
\newcommand{\sM}{{\mathcal M}}
\newcommand{\sN}{{\mathcal N}}
\newcommand{\sO}{{\mathcal O}}
\newcommand{\sP}{{\mathcal P}}
\newcommand{\sQ}{{\mathcal Q}}
\newcommand{\sR}{{\mathcal R}}
\newcommand{\sS}{{\mathcal S}}
\newcommand{\sT}{{\mathcal T}}
\newcommand{\sU}{{\mathcal U}}
\newcommand{\sV}{{\mathcal V}}
\newcommand{\sW}{{\mathcal W}}
\newcommand{\sX}{{\mathcal X}}
\newcommand{\sY}{{\mathcal Y}}
\newcommand{\sZ}{{\mathcal Z}}
\newcommand{\A}{{\mathbb A}}
\newcommand{\B}{{\mathbb B}}
\newcommand{\C}{{\mathbb C}}
\newcommand{\D}{{\mathbb D}}
\newcommand{\E}{{\mathbb E}}
\newcommand{\F}{{\mathbb F}}
\newcommand{\G}{{\mathbb G}}
\newcommand{\HH}{{\mathbb H}}
\newcommand{\I}{{\mathbb I}}
\newcommand{\J}{{\mathbb J}}
\renewcommand{\L}{{\mathbb L}}
\newcommand{\M}{{\mathbb M}}
\newcommand{\N}{{\mathbb N}}
\renewcommand{\P}{{\mathbb P}}
\newcommand{\Q}{{\mathbb Q}}
\newcommand{\R}{{\mathbb R}}
\newcommand{\SSS}{{\mathbb S}}
\newcommand{\T}{{\mathbb T}}
\newcommand{\U}{{\mathbb U}}
\newcommand{\V}{{\mathbb V}}
\newcommand{\W}{{\mathbb W}}
\newcommand{\X}{{\mathbb X}}
\newcommand{\Y}{{\mathbb Y}}
\newcommand{\Z}{{\mathbb Z}}
\newcommand{\id}{{\rm id}}
\newcommand{\rank}{{\rm rank}}
\newcommand{\END}{{\mathbb E}{\rm nd}}
\newcommand{\End}{{\rm End}}
\newcommand{\Eis}{{\rm Eis}}
\newcommand{\Hg}{{\rm Hg}}
\newcommand{\tr}{{\rm tr}}
\newcommand{\Sl}{{\rm Sl}}
\newcommand{\Gl}{{\rm Gl}}
\newcommand{\Gr}{{\rm Gr}}
\newcommand{\Cor}{{\rm Cor}}
\newcommand{\Hom}{{\rm Hom}}
\maketitle

\footnotetext[1]{This work was supported by SFB/Transregio 45
Periods, Moduli Spaces and Arithmetic of Algebraic Varieties of the
DFG (Deutsche Forschungsgemeinschaft); The second named author was also supported by National Natural Science Foundation of China (Grant No. 11471298); The third named author was supported by
National Natural Science Foundation of China (Grant No. 11201491)}

\begin{abstract}
Let $X$ be a Hilbert modular variety and $\V$ a non-trivial local
system over $X$ with infinite monodromy. In this paper we study
Saito's mixed Hodge structure (MHS) on the cohomology group
$H^k(X,\V)$ using the method of Higgs bundles. Among other results
we prove the Eichler-Shimura isomorphism, give a dimension formula
for the Hodge numbers and show that the mixed Hodge structure is
split over $\R$. These results are analogous to \cite{MS} in the
cocompact case and complement the results in \cite{Fr} for constant
coefficients.
\end{abstract}

\section{Introduction}\label{introduction}
Consider the Lie group $G=SL(2,\R)^{n}\times U$, where $U$ is
connected and compact. In their classical work \cite{MS} Matsushima
and Shimura study the cohomology groups $H^*(X,\V)$ with values in a
local system $\V$ attached to a linear representation of $G$ on a
compact quotient $X$ of a product of upper half planes by a discrete
subgroup $\Gamma \subset G$, see \S1 in \cite{MS}. The main result
of \cite{MS} is a dimension formula for the Hodge numbers of the
pure Hodge structure on $H^*(X,\V)$.

The arguments and results of the present paper grew out of an
attempt to find a generalization of the results in \cite{MS}. The
use of the maximum principle in the proof of the vanishing result of
Theorem 3.1 in \cite{MS} presents an obvious difficulty for a direct
generalization to the non-compact case. The case of Hilbert modular
surfaces was already studied in the thesis \cite{Sheng} of the
second named author. The final approach we have taken is a mixture
of the one of Zucker in \cite{Zuc1} for general locally symmetric
varieties and the original theory of harmonic forms in \cite{MS} for
discrete quotients of products of upper half planes. The technique
of Higgs bundles made it possible to combine both methods
effectively. The vanishing theorem of Mok on locally homogenous
vector bundles \cite{Mo} in the case of Hilbert modular varieties is
indispensable to obtain our results for \emph{all} non-trivial local
systems with infinite monodromy groups, i.e., also non-regular ones.
We are aware of the fact that some of our results can be also
explained in an automorphic setting. Harris and Zucker
\cite{HZ1,HZ2,HZ3} have developed a general framework using
automorphic forms which is related to our work through the
BGG-complex. However our approach is purely Hodge theoretic and can
be applied to other Shimura varieties and to more general
non-locally-homogenous situations. For example, we have started to
apply the same machinery to orthogonal and unitary Shimura varieties
(see \cite{MMWYZ},\cite{MYZ},\cite{DPMVZ}).

Now let $X$ be a Hilbert modular variety and $\V$ an irreducible local system over $X$.
Then the cohomology group $H^k(X,\V)$ carries a natural real MHS, which is the principal object
of study in the present paper. The case of a constant local system
has been treated in the book \cite{Fr} by Freitag. The relevant result is Theorem 7.9  Ch.
III in ibid. where a formula for the Hodge numbers of the real
mixed Hodge structures $H^k(X,\R), 0\leq k\leq 2n$ is provided. Using with Proposition \ref{split MHS} of the current
paper and Propositions 7.2, 7.7 in ibid., one can even show that the real MHS on the cohomology group is actually split. Note that when $\V$ has finite
monodromy, it becomes trivial after a finite \'{e}tale base change. Thus we
shall assume throughout the paper that $\V$ is a non-trivial local
system with infinite monodromy. Any such irreducible $\V$ is induced by an irreducible linear
representation of $G$ and is of the form $\V_m$ for a certain
$n$-tuple $m=(m_1,\ldots,m_n)\in \N_{0}^{n}$. For such an $m$, one puts
$|m|=\sum_{i=1}^{n}m_i$, and for any subset $I\subset
\{1,\cdots,n\}$, $|m_I|=\sum_{i\in I}m_i$. Each $\V=\V_m$ underlies a natural real polarized variation of Hodge structure ($\R$-PVHS) $\V_{\R}$
of weight $|m|$. After the work of Deligne, Saito and Zucker \cite{De1,Sa1,Sa2,Sa3,Sa4,Zuc}, the cohomology group $H^k(X,\V_{\R})$ carries a natural
real MHS with weights $\geq |m|+k$ (Saito's MHS).
This MHS is defined over $\Q$ if all $m_i$ are equal. See \S\ref{preliminaries} for details. Our main results are summarized as follows:
\begin{theorem}\label{thm1}
Let $X$ be a Hilbert modular variety of dimension $n\geq 2$ and
$\V_m$ be the irreducible non-trivial local system determined by
$m=(m_1,\ldots,m_n)\in \N_{0}^{n}$. Then:
\begin{itemize}
\item[(i)] $H^{k}(X,\V_m)=0$ for $0\leq k\leq n-1$ and $k=2n$.
\item[(ii)] If $m_1=\cdots=m_n$, then for $n+1\leq k\leq 2n-1$
$$
h_k^{|m|+n,|m|+n}:=\dim_\C
F^{|m|+n}W_{2(|m|+n)}H^{k}(X,\V_m)=\dim_\C
H^{k}(X,\V_m)=\binom{n-1}{k-n}h,
$$
where $h$ is the number of cusps.
\item[(iii)] If not all $m_i$ are equal, then $H^{k}(X,\V_m)=0$ for $n+1\leq k\leq 2n-1$.
\item[(iv)] One has the formula
$$
\dim H^{n}(X,\V_m)=\delta(m)h+\sum_{I\subset \{1,\cdots,n\}}h(I,m).
$$
where $\delta(m)=1$ if $m_1=\cdots=m_n$ is satisfied and otherwise
zero, and $h(I,m)$ is the dimension of cusp forms on $\HH^n$ with
respect to $\Gamma_I$ (see \S\ref{intersection cohomology} and
\S\ref{final results} for more details). Moreover, for $P+Q=|m|+n$,
$$
h^{P,Q}_{n}:=\dim_\C Gr^{P}_{F}Gr^{Q}_{\bar
F}Gr^{W}_{|m|+n}H^n(X,\V_m)=\sum_{\stackrel{I\subset
\{1,\ldots,n\},}{ |m_I|+|I|=P}}h(I,m),
$$
and $h_n^{|m|+n,|m|+n}=\delta(m)h$. Otherwise $h_n^{P,Q}=0$.
\end{itemize}
\end{theorem}
For the convenience of the reader we give a short account on a
conjecture of Harris-Zucker \cite{HZ3}. Recall that the Zucker conjecture, which has been solved by Looijenga and Saper-Stern
via different methods, asserts the following statement.

\begin{theorem}[Looijenga \cite{Lo}, Saper-Stern \cite{SS}]\label{Zucker conjecture}
Let $M$ be a smooth arithmetic quotient of a Hermitian symmetric
domain with $M^*$ the Baily-Borel compactification. Let $\W$ be a
locally homogenous VHS over $M$. Let $g$ (resp. $h$) be a group
invariant metric on $M$ (resp. $\W$), and
let $H^k_{(2)}(M,\W)$ be the $L^2$-cohomology group of degree $k$
with coefficients in $\W$ with respect to the above metrics. Let
$IH^k(M^*, \W)$ be the $k$-th (middle perversity) intersection
cohomology. Then one has a natural isomorphism
$$
r_k: H^k_{(2)}(M,\W)\cong IH^k(M^*,\W).
$$
\end{theorem}
The case of Hilbert modular varieties had been known to Zucker
\cite{Zuc2} before the conjecture was proven in the general case. If $\W$ is assumed to be an $\R$-VHS,
then the $L^2$-cohomology $H^k_{(2)}(M,\W)$ carries a real Hodge structure by the theory of $L^2$-harmonic
forms and the intersection cohomology $IH^k(M^*,\W)$ a real Hodge structure by Saito's theory on mixed Hodge modules.
A priori, these two Hodge structures do not coincide under the isomorphism in the above theorem. The following conjecture remains open in general:
\begin{conjecture}[Harris-Zucker, Conjecture 5.3 \cite{HZ3}]\label{HZ conjecture}
Assume $\W$ to be an $\R$-VHS. Then the isomorphism in Theorem \ref{Zucker conjecture} is an isomorphism of real Hodge structures.
\end{conjecture}
Zucker showed some instances for the above conjecture, including the case of Hilbert modular surface with constant coefficients \cite{Zuc3}.
As a byproduct of our study of Saito's MHS on $H^k(X,\V_\R)$, we obtain the following case of the conjecture:
\begin{theorem}\label{thm4}
Let $X$ be a Hilbert modular variety and $X^*$ its Baily-Borel compactification. Let $\V_\R$ be an irreducible $\R$-VHS with infinite monodromy.
Then the natural isomorphism $r_k: H_{(2)}^{k}(X^*,\V_\R)\cong IH^{k}(X^*,\V_\R) $ is an isomorphism of real Hodge structures.
\end{theorem}
Henceforth, we identify these two Hodge structures under the natural isomorphism in Theorem \ref{Zucker conjecture}. Our next result is to show that
Saito's MHS is split over $\R$. Let $j: X\hookrightarrow X^*$ be the natural inclusion. Then it induces an injective morphism of MHS
$IH^k(X^*,\V_{\R})\cong H_{(2)}^k(X^*,\V_{\R})\to
H^{k}(X,\V_{\R})$ (see Proposition \ref{injectivity}). We denote again by $IH^k(X^*,\V_{\R})$ the image of the
embedding in the following. The theory of Eisenstein cohomology (see \cite{Ha2},
\cite{Schw}) provides a decomposition $H^{k}(X,\V_\R)=H^{k}_{!}(X,\V_{\R})\oplus H_{\Eis}^{k}(X,\V_\R)$,
where $H^{k}_{!}(X,\V_{\R})$ is the image of $H_{c}^{k}(X,\V_\R)$,
the cohomology of $\V_{\R}$ with compact supports, in
$H^{k}(X,\V_\R)$.
\begin{theorem}\label{thm2}
For $n\leq k\leq 2n-1$ let $(H^k(X,\V_\R),W_.,F^.)$ be Saito's MHS.
Then:
\begin{itemize}
\item [(i)] For $n+1\leq k\leq 2n-1$, one has $H^k(X,\V_\R)=H^k_{\Eis}(X,\V_\R)$ and the MHS on
$H^k(X,\V_\R)$ is pure Hodge-Tate of type $(|m|+n,|m|+n)$.
\item [(ii)] $IH^n(X^*,\V_{\R})=H^{n}_{!}(X,\V_{\R})$ and
$H^n(X,\V_\R)=IH^n(X^*,\V_{\R})\oplus H^n_{\Eis}(X,\V_\R)$ is a
natural splitting of MHS over $\R$ into two pieces with weights
$|m|+n$ and $2(|m|+n)$. 
\end{itemize}
\end{theorem}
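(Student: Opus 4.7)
The plan is to deduce both parts of Theorem~\ref{thm2} from the explicit Hodge-number computation of Theorem~\ref{thm1} combined with two general facts: (a) the interior cohomology $H^k_{!}(X,\V_\R)$ is pure of weight $|m|+k$, and (b) the canonical Eisenstein decomposition $H^k(X,\V_\R)=H^k_{!}(X,\V_\R)\oplus H^k_{\Eis}(X,\V_\R)$ of \cite{Ha2,Schw}.

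For part (i), Theorem~\ref{thm1}(ii)--(iii) shows that in the range $n+1\le k\le 2n-1$ the group $H^k(X,\V_\R)$ is either zero or concentrated in Hodge bidegree $(|m|+n,|m|+n)$; hence the MHS is pure of weight $2(|m|+n)$. Since $H^k_{!}$ is pure of the strictly smaller weight $|m|+k$, it must vanish, and the entire cohomology is Eisenstein and Hodge--Tate of the stated type.

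For part (ii), we first identify $IH^n(X^*,\V_\R)$ with $H^n_{!}(X,\V_\R)$. Proposition~\ref{injectivity} realises $IH^n$ as a sub-MHS of $H^n$, pure of weight $|m|+n$ by Saito's purity. The natural factorization $H^n_c\to IH^n\to H^n$ gives $H^n_{!}\subseteq IH^n$, and the reverse inclusion follows from Zucker's conjecture -- known for Hilbert modular varieties -- identifying $IH^n$ with the image of $L^2$-cohomology; alternatively, one can match dimensions using Theorem~\ref{thm1}(iv)--(v) after subtracting the Eisenstein contribution already controlled by part (i) of the present theorem.

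It remains to upgrade the real Eisenstein decomposition $H^n=H^n_{!}\oplus H^n_{\Eis}$ to a splitting of $\R$-MHS. By Theorem~\ref{thm1}(iv)--(v), the only Hodge numbers of $H^n$ outside weight $|m|+n$ lie in bidegree $(|m|+n,|m|+n)$, so the weight filtration has only two non-trivial graded pieces, at the non-adjacent weights $|m|+n$ and $2(|m|+n)$, with $\Gr^W_{|m|+n}=H^n_{!}$ and $\Gr^W_{2(|m|+n)}$ pure Hodge--Tate. Any real complement of $H^n_{!}$ which is itself a sub-MHS then necessarily realises the weight-filtration splitting. The subtle step -- and the one we expect to require the most care -- is verifying that the Higgs-theoretic model of $H^n_{\Eis}$ used in the proof of Theorem~\ref{thm1} does provide such a sub-MHS complement over $\R$, so that the Eisenstein decomposition is a splitting of $\R$-MHS rather than merely of underlying $\R$-vector spaces.
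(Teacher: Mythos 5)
Your proposal inverts the paper's logical order and in doing so leaves the real content unproved. In the paper, Theorem \ref{thm1} is \emph{deduced from} Theorem \ref{thm2} (see the proof of Theorem \ref{dimension formula}, which cites Theorem \ref{mixed hodge structure of cohomology}); in particular the Hodge numbers $h_k^{|m|+n,|m|+n}=\dim H^k$ for $n+1\le k\le 2n-1$ and the full list of $h_n^{P,Q}$ are only obtained \emph{after} one knows the purity in (i) and the splitting in (ii). So deducing Theorem \ref{thm2} from Theorem \ref{thm1} is circular unless you supply an independent computation of those Hodge numbers, which you do not. The actual engine of the paper's proof is the pair of containments $H^k_{\Eis}(X,\V_m)\subset F^{|m|+n}H^k(X,\V_m)$ (Theorem \ref{type of Eisenstein coho}, proved by exhibiting explicit Eisenstein series representatives $E(\omega_a)$ lying in $A^{n,0}_{\bar X}(\log S)\wedge\overline{A^{|a|,0}_{\bar X}(*S)}\otimes E^{|m|,0}_m$) and $H^n_!\subset IH^n\subset W_{|m|+n}$ (Proposition \ref{injectivity} plus Harris--Zucker); these are then fed into the purely linear-algebraic Lemma \ref{lemma on MHS} and Proposition \ref{split MHS}. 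The Hodge bound on Eisenstein classes is precisely the step you flag as ``subtle'' and defer — but it is not a verification one can postpone; it is the theorem. Knowing the dimensions of the graded pieces $Gr^P_FGr^Q_{\bar F}Gr^W_{P+Q}$ (even granting Theorem \ref{thm1}) does not determine whether the extension $0\to W_{|m|+n}\to H^n\to Gr^W_{2(|m|+n)}\to 0$ splits over $\R$ compatibly with $F$: a two-step MHS with prescribed Hodge numbers need not be split.

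Two smaller points. First, your claim that $IH^n\subseteq H^n_!$ ``follows from Zucker's conjecture'' is not justified: Theorem \ref{Zucker conjecture} identifies $IH^n(X^*,\V)$ with $L^2$-cohomology, but says nothing about whether an $L^2$-class, viewed in $H^n(X,\V)$, lifts to compactly supported cohomology. In the paper the equality $IH^n=H^n_!$ is an \emph{output} of Proposition \ref{split MHS}: one only knows a priori $H^n_!\subseteq IH^n\subseteq W_{|m|+n}$, and the splitting argument forces $H^n_!=W_{|m|+n}$, hence equality throughout. Second, for part (i) the correct non-circular route is Theorem \ref{improvement of Li-Schwemer} (namely $H^k=H^k_{\Eis}$ for $k\ne n$, which rests on Corollary \ref{vanishing of intersection cohomology}, i.e.\ $IH^k=0$ for $k\ne n$, not on purity-weight bookkeeping for $H^k_!$), combined with $H^k_{\Eis}\subset F^{|m|+n}$ and Lemma \ref{lemma on MHS}. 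Your weight-comparison argument for killing $H^k_!$ presupposes the purity you are trying to establish.
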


Due to the splitting of the MHS $(H^k(X,\V_\R),W_.,F^.)$ over $\R$,
one has a bigrading $H^k(X,\V_m)=\bigoplus_{P,Q}H^{P,Q}_{k}$ with
$H^{P,Q}_{k}=F^P\cap \bar F^{Q}\cap W_{P+Q,\C}$ satisfying
$$
W_{l,\C}=\bigoplus_{P+Q\leq l}H^{P,Q}_k,\ F^{P}=\bigoplus_{r\geq
P}H^{r,s}_k.
$$
In addition, we can give an algebraic description of the (non-zero)
Hodge $(P,Q)$-components $H^{P,Q}_{k}$. This result can be regarded
as a generalized \emph{Eichler-Shimura isomorphism} for
Hilbert modular varieties:
\begin{theorem}\label{thm3}
Let $\bar X$ be a smooth toroidal compactification of $X$ (see
\cite{AMRT}) with boundary divisor $S=\bar X-X$. For $1\leq i\leq n$
let $\sL_i$ be the good extension (see \cite{Mum}) to $\bar X$ of
the locally homogenous line bundle over $X$ determined by the
automorphy factor $c_iz_i+d_i$ (using the notation of
\S\ref{preliminaries}). Then one has the following natural
isomorphisms:
\begin{itemize}
\item [(i)] For $n+1\leq k\leq 2n-1$,
$H^{k}(X,\V_m)=H_k^{|m|+n,|m|+n}\cong H^{k-n}(\bar X,
\bigotimes_{i=1}^{n}\sL^{m_i+2}_{i})$.
\item [(ii)] $H_n^{|m|+n,0}\cong H^{0}(\bar X, \sO_{\bar X}(-S)\otimes
\bigotimes_{i=1}^{n}\sL^{m_i+2}_{i}),\ H_n^{|m|+n,|m|+n}\cong
H^{0}(S, \bigotimes_{i=1}^{n}\sL^{m_i+2}_{i}|_S)$, and for $0\leq
P\leq |m|+n-1, \ P+Q=|m|+n$,
$$
H_n^{P,Q}\cong \bigoplus_{\stackrel{I\subset \{1,\ldots,n\},}{
|m_I|+|I|=P}}H^{k-|I|}(\bar X, \bigotimes_{i\in
I}\sL^{m_i+2}_{i}\otimes \bigotimes_{i\in I^c}\sL^{-m_i}_{i}).
$$
\end{itemize}
\end{theorem}

In the above results, Theorem \ref{thm1}, (i) and the first half of
Theorem \ref{thm2}, (i) for regular local systems are special cases
of Li-Schwermer \cite{LS} (see also Saper \cite{Sap}). Wildeshaus
has recently informed us that the main result in \cite{BW} also
implies Theorem \ref{thm1}, (ii). Combined with Lemma \ref{lemma on
MHS}, one is able to show then the second half of Theorem
\ref{thm2}, (i) for regular local systems.

The paper is organized as follows: Section \ref{preliminaries}
contains the basic set-up. In section \ref{logarithmic higgs
cohomology} we compute the logarithmic Higgs cohomology and present
an algebraic description of the gradings of the Hodge filtration on
cohomology. Section \ref{intersection cohomology} studies the pure
Hodge structure on the $L^2$-cohomology which relies on the theory
of $L^2$-harmonic forms. This provides an $L^2$-generalization of
the results in \cite{MS}. Section \ref{Eisenstein cohomology}
introduces Eisenstein cohomology. This has been extensively
investigated in \cite{Ha1}, \cite{Fr} for constant coefficients and
in \cite{Zuc2, HZ1,HZ2,HZ3}, \cite{LS} for non-constant
coefficients. We complement these results by computing the Hodge
type of Eisenstein cohomology. The last section combines all results
and contains the proofs of the above theorems.

\textbf{Acknowledgements.} We would like to thank Michael Harris and
J\"{o}rg Wildeshaus for their interest and comments on this paper.
The referee has made a tremendous effort to make the presentation as
clear as possible. His/Her advice about Hodge structures on
$L^2$-cohomology and intersection cohomology greatly improved
this paper. In particular, Theorem \ref{thm4} came out during the revision of the paper.
We thank him/her heartily.

\section{Preliminaries and Saito's mixed Hodge structure}\label{preliminaries}

Let $F\subset \R$ be a totally real number field of degree $n \geq
2$ over $\Q$, with the set of real embeddings ${\rm
Hom}_{\Q}(F,\R)=\{\sigma_1=id,\ldots,\sigma_n\}$. Let $\sO_F$ be the
integer ring of $F$ and $\sO_F^*$ be the unit group of $\sO_F$. For
an element $a\in F$ one puts $a^{(i)}=\sigma_i(a)$, the $i$-th
Galois conjugate of $a$. Let $G=R_{F|\Q}SL_2$ be the $\Q$-algebraic
group obtained by Weil restriction. The set of real points $G(\R)$
of $G$ is identified with $G_1\times \cdots \times G_n$, where each
$G_i$ is a copy of $SL(2,\R)$. The subset $G(\Q)\subset G(\R)$ is
then given by
$$
\Biggl\{ \left( \left(
     \begin{array}{cc}
       a^{(1)} & b^{(1)} \\
       c^{(1)} & d^{(1)} \\
     \end{array}
   \right)
,\ldots,\left(
     \begin{array}{cc}
       a^{(n)} & b^{(n)} \\
       c^{(n)} & d^{(n)} \\
     \end{array}
   \right) \right) \in G_1\times \cdots\times G_n|\ a,b,c,d\in F \Biggr\}.
$$
Now let $\HH^{n}=\HH_1\times \cdots\times \HH_n$ be the product of
$n$ copies of the upper half plane with coordinates
$$
z=(z_1=x_1+iy_1,\ldots,z_l=x_l+iy_l,\ldots,z_n=x_n+iy_n).
$$
The group $G(\R)$ acts on $\HH^n$ by a product of linear fractional
transformation. Namely, for $g=\Biggl( g_1=\left(
\begin{array}{cc}
a_{1} & b_1 \\
c_1 & d_1 \\
\end{array}
\right), \ldots, g_n=\left(
\begin{array}{cc}
a_n & b_n \\
c_n & d_n \\
\end{array}
\right) \Biggr) \in G(\R)$ and $z\in \HH^n$, the action is given by
$g\cdot z=(g_1\cdot z_1,\ldots,g_n\cdot z_n)$, where $g_i\cdot
z_i=(a_iz_i+b_i)(c_iz_i+d_i)^{-1}$. The action is transitive and the
isotropy subgroup of $G(\R)$ at the base point $z_0=(i,\ldots,i)$ is
$K=SO(2)^{n}$, a maximally compact subgroup. $K$ acts on $G$ by
right multiplication and one identifies the set $G/K$ of left cosets
naturally with $\HH^n$. Let $\Gamma\subset G(\Q)$ be a torsion-free
subgroup which is commensurable with $G(\Z)\subset G(\Q)$. It is
called a \emph{Hilbert modular group} in this paper and will be
fixed throughout. By the theorem of Baily-Borel, the quotient space
$X_\Gamma:=\Gamma\backslash G(\R)/K$ is naturally a smooth
quasi-projective variety, which is called the \emph{Hilbert modular
variety} for $\Gamma$. As $\Gamma$ is fixed, we shall denote
$X_\Gamma$ simply by $X$. The topological space $X$ is non-compact
and admits several natural compactifications. The Baily-Borel
compactification $X^*$ of $X$ is obtained by adding a number $h$ of
\emph{cusps} as a set (see \cite{Fr}) and has the structure of a
projective variety by Baily-Borel. It is however singular, and
admits a natural family of resolutions of singularities, the
so-called smooth toroidal compactifications (see \cite{AMRT} for
general locally symmetric varieties and \cite{Eh} more details for
Hilbert modular varieties). Let $\bar X$ be such a smooth
compactification, and let $S=\bar X-X$ be the divisor at infinity,
which has simple normal crossings. In addition, we also use the
Borel-Serre compactication $X^\sharp$ in \S\ref{Eisenstein
cohomology}. It is a smooth compact manifold with boundary, which
contains $X$ as the interior open subset. The boundary $\partial
X^\sharp=X^\sharp-X$ has $h$ components in total, with each
component an $(S^1)^{n}$-bundle over $(S^1)^{n-1}$.

For $A\subset \C$ a $\Q$-subalgebra, we define an $A$-local system
over $X$ as a locally constant sheaf of free $A$-modules of finite
rank with respect to the analytic topology on $X$. Let $0$ be the
point of $X$ given by the $\Gamma$-equivalence class of $z_0\in
\HH^{n}$. Then it is well-known that an $A$-local system over $X$
corresponds to a representation $\pi_1(X,0)\to GL(A)$. In this paper
$A$ is either $\Q$, $\R$ or $\C$. Since $\pi_1(X,0)$ is naturally
identified with $\Gamma$, by the super-rigidity theorem of Margulis
(see \cite{Ma}), equivalence classes of complex local systems with
infinite monodromy groups over $X$ are in one-to-one correspondence
with equivalence classes of finite dimensional complex
representations of $G(\R)$. Let $\V$ be the complex local system
corresponding to the irreducible representation $\rho: G(\R)\to
GL(V)$. By Schur's lemma, there exists an $n$-tuple
$m=(m_1,\ldots,m_n)$ of non-negative integers and $n$ copies $V_i$
of $\C^2$ for $i=1,\ldots,n$ such that $\rho=\rho_{m_1}\otimes
\cdots\otimes \rho_{m_n}$, where for each $i$, $\rho_{m_i}: G_i\to
GL(S^{m_i}V_i)$ is isomorphic to the $m_i$-th symmetric power of the
standard complex representation of $SL(2,\R)$. A local system $\V$
is called \emph{regular} if $\rho$ is a regular representation,
i.e., its highest weight is contained in the interior of the Weyl
chamber. It is clear that $\V=\V_m$ is regular if and only if each
$m_i$ in the above is positive. To summarize, for each $m\in
\N_{0}^{n}$, there is a unique complex local system $\V_m$ over $X$
up to isomorphism, and any complex local system over $X$ is a finite
direct sum of such. For $(0,\ldots,m_i,\ldots,0)$ we denote the
corresponding local system by $\V_{i,m_i}$. So
$\V_m=\V_{1,m_1}\otimes \cdots\otimes \V_{n,m_n}$. The complex local
system $\V=\V_m$ is the complexification of a natural real local
system $\V_{\R}$. Moreover, $\V_{\R}$ is naturally an $\R$-PVHS,
which is a special case considered by Zucker (see \cite{Zuc1}). This
can be seen as follows. Let $ e_1=\left(\begin{array}{c}
1 \\
0 \\
\end{array} \right),
\ e_2=\left( \begin{array}{c}
0 \\
1 \\
\end{array} \right)
$ be the standard basis of $\C^2$ on which $SL(2,\R)$ acts by matrix
multiplication. Then $\R^2=\R e_1 + \R e_2 \subset \C^2$ is an
invariant $\R$-structure. Define a symplectic form $\omega$ on
$\R^2$ such that $\{e_1,e_2\}$ is the symplectic basis for $\omega$.
Let $\C^{1,0}=\C\{e_1-ie_2\}$ and $\C^{0,1}=\C\{e_1+ie_2\}$. Then
the decomposition $\R^2\otimes_{\R}\C=\C^{1,0}\oplus \C^{0,1}$
defines a polarized weight one Hodge structure on $\R^2$, and this
decomposition has the special property that it is also the
eigen-decomposition of the induced action of $SO(2)$ by restriction.
For each $1\leq i\leq n$, the $i$-th factor of $G(\R)$ acts on $V_i$
via the standard representation and trivially on any other factor.
Applying the foregoing construction on $\C^2$ to $V_i$, one obtains
a polarized weight one Hodge structure on the  fiber of the constant
bundle $\HH^n\times V_i$ at $z_0\in \HH^n$. By using the homogeneity
property of $\HH^n$, one defines a $\R$-PVHS on the constant bundle
$\HH^n\times V_i$, and it descends to a $\R$-PVHS on $\V_{i,1}$ over
$X$ (see \S4 in \cite{Zuc1}). Taking the $m_i$-th symmetric power,
one obtains a $\R$-PVHS of weight $m_i$ on $\V_{i,m_i}$, and further
by taking tensor products one obtains a $\R$-PVHS of weight
$|m|=\sum_{i=1}^{n}m_i$ on $\V_m$ as claimed. It is clear that
$\V_{\R}$ is in fact defined over $F\subset \R$, and is even defined
over $\Q$ if (and only if) $m_1=\cdots=m_n$ holds.

Now we consider an even more general setting. Let $M$ be a
quasi-projective manifold of dimension $d$ and
$(\W_{\R},\nabla,F^\cdot)$ a $\R$-PVHS over $M$ of weight $n$. Let
$\bar M$ be a smooth, projective compactification of $M$ such that
$S=\bar M-M$ is a simple normal crossing divisor. For simplicity of
exposition, we assume that the local monodromy around each
irreducible component of $S$ is unipotent. Put $\W_{an}=\W_{\R}\otimes_{\R}\sO_{M_{an}}$, where
$\sO_{M_{an}}$ is the sheaf of germs of holomorphic functions on
$M$. Deligne's canonical extension (see Ch II, \S4 in \cite{De0})
gives a unique extended vector bundle $\bar \W_{an}$ of $\W_{an}$
over $\bar M$, together with a flat logarithmic connection $\bar
\nabla: \bar \W_{an}\to \bar \W_{an}\otimes \Omega^1_{\bar
M_{an}}(\log S)$. Using this we obtain the logarithmic de Rham
complex $\Omega^*_{\log}(\bar \W_{an},\bar \nabla)$. By Deligne in
loc. cit., the complex computes the cohomology groups
$H^{\cdot}(M,\W_{\R})$. Schmid's nilpotent orbit theorem implies
that the Hodge filtration $F^\cdot$ extends to a filtration $\bar
F^\cdot$ of holomorphic subbundles (not merely subsheaves) of $\bar \W_{an}$ as well (see
\S4 in \cite{Sch}). By GAGA, the extended holomorphic objects over
$\bar M$ are in fact algebraic. One defines a Hodge filtration on
the logarithmic de Rham complex by $ F^r\Omega^*_{\log}(\bar
\W_{an},\bar \nabla)=\Omega^*_{\bar M}(\log S)\otimes \bar F^{r-*}$,
which is a subcomplex by Griffiths transversality. After Saito (see
\cite{Sa1}-\cite{Sa4}), there is a naturally defined weight
filtration $W_\cdot$ on the logarithmic de Rham complex.

For convenience,
we shall briefly recall the construction of Saito's MHS on $H^*(M,\W_{\R})$ (all the details can be found in \cite{Ar}).

Extend $(\W_{an},\nabla)$ to a vector bundle $\bar \W_{an}^I$ with a logarithmic connection such that the eigenvalues of its
residues lie in a half open interval $I$ of length one. Note that $\bar \W_{an}
=\bar \W_{an}
^{[0,1)}$
and define $\sM: = \bigcup_I \bar \W_{an}
^I\subseteq j_*\W_{an}
$.
Then $\sM$ is a $D_M$-module which corresponds to the perverse sheaf
$\R j_* \W_{\R}[n]\otimes \C$. Filter this by
$$F_p\sM =\sum_i F_iD_MF_{p-i}\bar \W_{an}^{(-1,0]}$$
Then according to \cite{Sa2}:
\begin{thm}
There exist compatible filtrations $W$ on $\R j_* \W_{\R}[n]$ and $\sM$ such that
$(\R j_* \W_{\R}[n],   W, \sM, F)$ becomes a mixed Hodge module.
\end{thm}
Then one deduces Saito's  MHS on $H^*(M,\W_{\R})$
by elaborating on a remark in \cite{Sa2}. There is a natural isomorphism in the derived category of sheaves of abelian groups:
$$\alpha:\R j_* \W_{\R}\otimes \C\cong \Omega_{\log}^*(\bar \W_{an},\bar\nabla).$$
Using the isomorphism, the filtration on the left is transported into the Hodge filtration on the logarithmic de Rham complex described as above.
There is also the induced weight filtration $W_{\cdot}$ on the complex. One constructs $W_{\cdot}$ by taking the filtration induced by
$(\Omega_M^*\otimes W_{*+n} \sM)[n]$
under the inclusion
$$\Omega_{\bar M}^*(\log S)\otimes \bar \W_{an}\subset
\Omega_M^*\otimes \sM[n].$$
We recall at this point that Deligne introduced a device, called a {\em cohomological mixed Hodge complex} \cite[\S 8.1]{De1}, for
producing mixed Hodge structures. This consists of
a bifiltered complex $(A_\C,W_\C, F^{\cdot})$ of sheaves of  $\C$-vector spaces,
a filtered complex of $(A,W)$ of sheaves over $\R$, and
a filtered quasi-isomorphism $(A,W)\otimes \C\cong (A_\C,W_\C)$.
The crucial axioms are that
\begin{itemize}
\item  this datum should induce a pure weight $i+k$ real Hodge
  structure on $H^i(Gr^W_kA)$;
\item the filtration induced by $F^{\cdot}$ on $Gr^W_k$ is strict, i.e.,
the map $H^i(F^pGr^W_kA)\to H^i(Gr^W_kA)$ is injective.
\end{itemize}

\begin{proposition} \label{MHC} With these filtrations
  $$(\R j_* \W_{\R}, W_{\cdot}; \Omega_{\bar M}(\log S)^\dt\otimes  \bar \W_{an},  W_{\cdot}, F^{\cdot};\alpha)$$
 becomes a cohomological mixed Hodge complex.
\end{proposition}

To verify the above axioms, one observes that $Gr^{ W}_k\R j_*\W_{\R}$
decomposes into a direct sum of intersection cohomology complexes associated to pure variations of Hodge structure of the correct weight. An appeal to
theorem of \cite{CKS2} or \cite{KK}  shows that $H^i(Gr_k^{ W}\R j_*\W_{\R})$ carries a pure Hodge structure of weight  $k+i$. Strictness  follows from
similar considerations.

From \cite[\S 8.1]{De1}, we obtain Saito's MHS on $H^k(M,\W_{\R})$:

\begin{corollary}
Notation as above. Then the cohomology group $H^k(M,\W_{\R})$ admits a real mixed Hodge structure of weight $n+k$.
Namely, the Hodge filtration is induced by $F^{\cdot}$ under the isomorphism
$$H^k(M ,  \W_{\R} \otimes \C)\cong \mathbb{H}^k(\Omega_{\log}^*(\bar \W_{an},\bar \nabla)),$$
and the weight filtration is determined by
$$W_{l+k} H^k(M,\W_{\R}  \otimes \C): = \mathbb{H}^k(X, W_l(\Omega_{\log}^*(\bar \W_{an},\bar \nabla)).$$
\end{corollary}

Finally, we remark that in the simplest case $\W_{\R}=\R$, $W_{\cdot}$ above
coincides with the filtration $W$ defined by Deligne \cite[\S 3.1]{De1}. In
particular, Saito's MHS coincides with Deligne's in this case.

It is this MHS that we intend to understand properly in the case of Hilbert modular varieties.

After taking the graded object associated to the pair $(\bar
\W_{an},\bar \nabla)$ with respect to the filtration $\bar F^\cdot$,
one obtains the logarithmic Higgs bundle
$(E=\bigoplus_{p+q=n}E^{p,q},\theta=\bigoplus_{p+q=n}
\theta^{p,q})$. Precisely, $E^{p,q}= \bar F^p /\bar F^{p+1} $ and
$\theta^{p,q}=Gr^p_{\bar F^\cdot}\bar \nabla: E^{p,q}\to
E^{p-1,q+1}\otimes \Omega^1_{\bar M}(\log S)$ which is $\sO_{\bar
M}$-linear. The Higgs field is integrable, i.e., $\theta \wedge
\theta =0$, and so one can form the \emph{logarithmic Higgs complex}
$\Omega^*_{\log}(E,\theta)$, which is the logarithmic version of the
Higgs complex over a compact case. The complex appears on page 24 of
\cite{Sim} under the name holomorphic Dolbeault complex. More
specifically, the $p$-th term of the complex
$\Omega^p_{\log}(E,\theta)=E\otimes \Omega^p_{\bar M}(\log S)$ and
the differential $\Omega^p_{\log}(E,\theta)\to
\Omega^{p+1}_{\log}(E,\theta)$ is the composite
\begin{eqnarray*}
   E\otimes \Omega^p_{\bar M}(\log S)& \stackrel{\theta\otimes id}{\longrightarrow} & [E\otimes \Omega^1_{\bar M}(\log S)]\otimes \Omega^p_{\bar M}(\log
S) \\
    &=&  E\otimes [\Omega^1_{\bar M}(\log S)\otimes\Omega^p_{\bar M}(\log
S)]\stackrel{id\otimes pr}{\longrightarrow}E\otimes
\Omega^{p+1}_{\bar M}(\log S).
\end{eqnarray*}
Note that the sheaves and the differentials in this complex are
algebraic. The hypercohomology of this complex is called
\emph{logarithmic Higgs cohomology}. One observes that the
logarithmic Higgs complex is a direct sum
$\bigoplus_{P=0}^{d+n}\Omega^*_{P}(E,\theta)$ of subcomplexes, where
$\Omega^{l}_{P}(E,\theta)=E^{P-l,n-P+l}\otimes \Omega^{l}_{\bar
M}(\log S)$. We define $\sC^{P,l}(E,\theta)$ to be the $l$-th
cohomology sheaf of the subcomplex $\Omega^{*}_{P}(E,\theta)$. The
relation of logarithmic Higgs cohomology with Saito's MHS is given
by the following

\begin{proposition}\label{logarithmic higgs cohomology computes the grading of hodge
filtration} Let $\W=\W_{\R}\otimes \C$ and $F^\cdot$ be the Hodge
filtration on $H^k(M,\W)$ as above. For $0\leq k\leq d$, one has a
natural isomorphism
$$
Gr_{F^{\cdot}}^{P}H^k(M,\W)\cong \HH^{k}(\bar M, \Omega^*_{P}(E,\theta)).
$$
\end{proposition}
\begin{proof}
Deligne Scholie 8.1.9 (v) \cite{De1} asserts that the Hodge filtration on a cohomological mixed Hodge complex
degenerates at the $E_1$-term. So the statement is just a direct consequence of Proposition \ref{MHC}.
\end{proof}
When $\W$ is constant, the Higgs field is trivial. Therefore the
logarithmic Higgs cohomology is just the usual sheaf cohomology. In
another important case the above result can also be improved:
\begin{proposition}\label{degeneration of ss}
Assume that $M$ is a smooth arithmetic quotient of a Hermitian
symmetric domain and $\bar M$ a smooth toroidal compactification.
Moreover, assume that $\W_{\R}$ is locally homogenous. Then one has a natural isomorphism
$$
Gr_{F}^{P}H^k(M,\W_{\R})\cong \bigoplus_{l=0}^{d}H^{k-l}(\bar M,
\sC^{P,l}(E,\theta)).
$$
\end{proposition}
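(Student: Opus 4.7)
The plan is to combine Proposition \ref{logarithmic higgs cohomology computes the grading of hodge filtration}, which already identifies $Gr_F^P H^k(M,\W)$ with the hypercohomology $\HH^k(\bar M,\Omega^*_P(E,\theta))$, with a formality statement for the subcomplex $\Omega^*_P(E,\theta)$ in the locally homogeneous setting. Concretely, the task is to produce a canonical isomorphism in the derived category of $\bar M$
$$
\Omega^*_P(E,\theta) \;\simeq\; \bigoplus_{l=0}^{d}\sC^{P,l}(E,\theta)[-l];
$$
taking hypercohomology then yields the claimed direct sum decomposition. Equivalently, one must show that the hypercohomology spectral sequence
$$
E_2^{p,q} = H^p(\bar M,\sC^{P,q}(E,\theta)) \;\Longrightarrow\; \HH^{p+q}(\bar M,\Omega^*_P(E,\theta))
$$
degenerates at $E_2$ and that the induced filtration admits a canonical splitting.

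The essential input is that under the hypothesis, the Higgs bundle $(E,\theta)$ is a Mumford-style good extension of a locally homogeneous bundle. Writing the Hermitian symmetric domain as $G(\R)/K$ with Harish-Chandra decomposition $\mathfrak g_\C = \mathfrak p^+\oplus \mathfrak k_\C\oplus \mathfrak p^-$, each $E^{p,q}$ descends from a $K$-representation $V^{p,q}$, and the Higgs field $\theta^{p,q}\colon E^{p,q}\to E^{p-1,q+1}\otimes\Omega^1_{\bar M}(\log S)$ is induced, under the identification $T_M\simeq G\times_K \mathfrak p^+$, by the $K$-equivariant contraction $\mathfrak p^- \otimes V^{p,q}\to V^{p-1,q+1}$. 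Thus $\Omega^*_P(E,\theta)$ is, on $M$, the locally homogeneous sheafification of a complex of $K$-representations with constant (Lie-theoretic) differentials. A Kostant/BGG-type argument decomposes such a complex into a direct sum of subcomplexes, each concentrated in a single cohomological degree (determined by the length of the relevant Weyl element) and with unique cohomology given by a locally homogeneous line bundle. This gives the derived-category formality on the open part $M$.

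The main obstacle will be to extend this BGG splitting from $M$ across the boundary divisor $S$ coherently, so that it takes place in the derived category of $\bar M$ and identifies each summand with the Mumford good extension of the corresponding line bundle. One has to verify that the BGG idempotents, being $K$-equivariant, extend through the toroidal compactification \`a la \cite{Mum}, and in particular that the cohomology sheaves $\sC^{P,l}(E,\theta)$ computed on $\bar M$ are exactly these extended line bundles. This compatibility is local at the cusps and requires the explicit toroidal model of $\bar M$ together with the multiplicative structure of Mumford's extensions; it is the step where the locally homogeneous hypothesis, and not merely the Hermitian symmetric nature of $M$, is genuinely used. Once the splitting is established on $\bar M$, hypercohomology together with Proposition \ref{logarithmic higgs cohomology computes the grading of hodge filtration} yields the proposition.
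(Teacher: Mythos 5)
Your proposal follows essentially the same route as the paper: reduce via Proposition \ref{logarithmic higgs cohomology computes the grading of hodge filtration} to a formality statement for $\Omega^*_P(E,\theta)$ on $\bar M$, obtain the splitting on the open part from homogeneity, and carry it across $S$ using Mumford's good extensions (of $\Omega^1_{\bar M}(\log S)$, of $E$ via the CKS norm estimates, and of the cohomology sheaves). The one difference is in how the splitting on $M$ is produced: where you invoke Kostant/BGG idempotents, the paper argues more elementarily that in a group-invariant local frame $\theta$ has constant matrix, so by complete reducibility one gets a holomorphic \emph{and} metric decomposition ${\rm Ker}\,\theta=\sC\oplus{\rm Im}\,\theta$ exhibiting each $\sC^{P,l}(E,\theta)$ as a direct summand of $\Omega^l_P(E,\theta)$ and the inclusion $\bigoplus_l\sC^{P,l}(E,\theta)[-l]\hookrightarrow\Omega^*_P(E,\theta)$ as a quasi-isomorphism; this sidesteps the Weyl-element bookkeeping and makes the extension of the splitting across the boundary immediate, since constancy of $\theta$ persists in the extended invariant frame.
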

The proof is based on the one of Proposition 5.19 in \cite{Zuc1} for
compact $M$. The new ingredient is the relation between the Mumford extension
of Higgs complex over $M$ and the logarithmic Higgs complex over $\bar M$.
\begin{lemma}\label{Mumford extension}
Let $\Omega^*(E^0,\theta^0)$ be the Higgs complex of
$\W_{\R}$ over $M$, and $\Omega_{\log}^*(E,\theta)$ the logarithmic
Higgs complex of $\W_{\R}$ over $\bar M$. Then the cohomology sheaf of the logarithmic Higgs complex is the good extension
of the corresponding cohomology sheaf in the sense of Mumford \cite{Mum}.
\end{lemma}
\begin{proof}
The Higgs complex is just the restriction of the logarithmic
Higgs complex to $M$. We consider the terms in the log complex. Each term is a tensor product of a wedge power
of $\Omega^1_{\bar{M}}(\log S)$ and the Deligne-Schmid extension of
$E^0=E|_M$. Proposition 3.4 in \cite{Mum} shows that
$\Omega^1_{\bar{M}}(\log S)$ is the good extension of
$\Omega^1_{M}$. The estimates of the Hodge metric, which is the
invariant metric on $E^0$ up to scalar, provided by Theorem 5.21 in
\cite{CKS}, shows that the sections of $E$ are at most of
logarithmic growth around $S$ with respect to the Hodge metric. By
the characterization of the good extension in Proposition 1.3,
\cite{Mum}, one concludes that $E$ is the good extension of $E^0$.
We consider also the bundle homomorphism: we may choose a
local basis of group invariant sections for $E^0$ and conclude that
$\theta^0$ is a constant morphism in this basis, since, by
homogeneity, $\theta^0$ is determined by its value at one point. In
the extended group invariant basis $\theta$ is also constant on
$\bar M$. Therefore, it follows that the $l$-th cohomology sheaf
$\sC^{P,l}(E,\theta)$ of $\Omega_P^*(E,\theta)$ is the good
extension of $l$-th cohomology of $\Omega_P^*(E^0,\theta^0)$ to
$\bar M$, which is a direct summand of $\Omega_P^l(E,\theta)$.
Moreover, the inclusion
$\bigoplus_{l}\sC^{P,l}(E,\theta)[-l]\hookrightarrow
\Omega_P^*(E,\theta)$ is a quasi-isomorphism.
\end{proof}

\section{Logarithmic Higgs cohomology over a Hilbert modular
variety}\label{logarithmic higgs cohomology}

Let $\V=\V_m$ (resp. $\V_{i,m_i}$) be an irreducible complex local
system over the Hilbert modular variety $X$. After a possible finite
\'{e}tale base change of $X$, the local monodromies of $\V$ at
infinity can be made unipotent, so we assume this from now on. This
assumption can be removed once a result is unaffected by finite
\'{e}tale base change. Let $(E_m,\theta_m)$ (resp.
$(E_{i,m_i},\theta_{i,m_i})$) be the resulting logarithmic Higgs
bundle over $\bar X$ by the construction in \S\ref{preliminaries}.
It is clear that the construction is compatible with direct sums and
tensor products, and hence
$$
(E_m,\theta_m)=(E_{1,m_1},\theta_{1,m_1})\otimes \cdots \otimes
(E_{n,m_n},\theta_{n,m_n}).
$$
In this section the logarithmic Higgs cohomology of $(E_m,\theta_m)$
will be determined.

We start with a basic property of the set
$\{(E_{i,1},\theta_{i,1}),\ 1\leq i\leq n\}$ of logarithmic Higgs
bundles.
\begin{definition}
For $1\leq i\leq n$ define $\sL_i$ to be the good extension of the
locally homogenous line bundle over $X$ corresponding to the
automorphy factor $c_i+d_iz_i$.
\end{definition}
The $\sL_i$'s are locally free, either by the proof of Main Theorem 3.1 \cite{Mum}, or by noticing that
$\sL_i$ is the extended Hodge filtration of $\bar \V_{i,1,an}$ by the nilpotent orbit theorem.
\begin{proposition}\label{basic logarithmic higgs bundles}
For each $i$, $E_{i,1}=E^{1,0}_{i,1}\oplus E^{0,1}_{i,1}$ with
$E^{1,0}_{i,1}=\sL_i$ and $E^{0,1}_{i,1}=\sL_i^{-1}$. There exists a
natural isomorphism $\Omega^1_{\bar X}(\log S)=
\bigoplus_{i=1}^{n}\sL_i^2$ such that $\theta_{i,1}^{1,0}:
\sL_{i}\to \sL_{i}^{-1}\otimes \Omega^1_{\bar X}(\log S)$ is the
composition of the tautological maps:
$$\sL_i\stackrel{\cong}{\longrightarrow} \sL_i^{-1}\otimes
\sL_i^2\hookrightarrow \sL_{i}^{-1}\otimes \Omega^1_{\bar X}(\log
S).$$
\end{proposition}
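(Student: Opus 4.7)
The strategy is to establish the statement first over $X$ by working on the universal cover $\HH^n$, and then to extend it across the toroidal boundary by the uniqueness of Mumford's good extensions, exactly as in the proof of Proposition~\ref{degeneration of ss}. To begin, the construction in \S\ref{preliminaries} gives the Hodge decomposition of the fiber of $\V_{i,1}$ at $z_0$ as $\C^2=\C\{e_1+ie_2\}\oplus\C\{e_1-ie_2\}$, and by $G(\R)$-homogeneity the Hodge filtration propagates to all of $\HH^n$. Since both Hodge subspaces are $SO(2)^n$-stable, one obtains two line sub/quotient bundles on $X$; setting $\sL_i:=E_{i,1}^{1,0}|_X$, the polarization on $\V_{i,1}$ identifies $E_{i,1}^{0,1}|_X$ with $\sL_i^{-1}$, yielding the direct sum decomposition over $X$.

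Next, I would compute the automorphy factor of $\sL_i$ viewed as a $\Gamma$-equivariant line bundle on $\HH^n$. A direct calculation with the $SL(2,\R)$-action on the line $\C(e_1+ie_2)$ shows that $\sL_i$ carries the weight-one modular automorphy factor $(c_iz_i+d_i)^{-1}$ in the $i$-th variable and is trivial in the others, so that $\sL_i^2$ acquires the automorphy factor $(c_iz_i+d_i)^{-2}$, which is precisely the transformation law of $dz_i$. Because $\Gamma\subset SL(2,\R)^n$ acts componentwise, the splitting $\Omega^1_{\HH^n}=\bigoplus_{i=1}^n\pi_i^*\Omega^1_{\HH_i}$ descends to $\Omega^1_X=\bigoplus_i\sL_i^2$. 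To extend this identification to $\bar X$, one observes that both sides are good extensions of their restrictions to $X$: the left hand side by Proposition~3.4 of \cite{Mum}, and each $\sL_i$ by Schmid's nilpotent orbit theorem combined with the Cattani--Kaplan--Schmid growth estimate \cite{CKS}, exactly as invoked in the proof of Proposition~\ref{degeneration of ss}. Uniqueness of good extensions (Proposition~1.3 of \cite{Mum}) then forces the claimed isomorphism $\Omega^1_{\bar X}(\log S)\simeq\bigoplus_{i=1}^n\sL_i^2$ on all of $\bar X$.

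For the Higgs field, $\theta_{i,1}^{1,0}$ is the $\sO$-linear map on $\Gr_F$ induced by the Gauss--Manin connection $\nabla_{i,1}$, which on $\HH^n$ is simply the trivial connection on the constant bundle $\HH^n\times V_i$. Differentiating an explicit trivializing section of $\sL_i$ and projecting modulo $F^1$, one verifies that the image lies in the $i$-th cotangent summand $\sL_i^{-1}\otimes\sL_i^2$ and coincides there with the tautological identification $\sL_i\simeq\sL_i^{-1}\otimes\sL_i^2$; the full logarithmic Higgs field is then determined by its restriction to $X$ together with the good-extension property. The main obstacle is the extension step: one must verify that the natural isomorphism over $X$ respects Mumford's logarithmic-growth conditions on both sides, which depends nontrivially on Schmid's theory and the standard estimates on the invariant Hodge metric. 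The automorphy-factor computation on $\HH^n$ and the explicit form of $\theta_{i,1}^{1,0}$ are both routine once this good-extension framework is in place.
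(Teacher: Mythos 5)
Your argument is correct and follows essentially the same route as the paper's (very terse) proof: the paper simply cites the fact that the period map for Hilbert modular varieties is an embedding --- which is exactly the content of your automorphy-factor/Kodaira--Spencer computation identifying $\theta_{i,1}^{1,0}$ with the tautological map onto the $i$-th summand of $\Omega^1_X$ --- together with uniqueness of the Mumford good extension, which is precisely your extension step across $S$. The only quibble is a harmless convention issue in whether $\sL_i$ carries the factor $(c_iz_i+d_i)$ or $(c_iz_i+d_i)^{-1}$ (frame versus section), which does not affect the conclusion $\Omega^1_X\simeq\bigoplus_{i=1}^n\sL_i^2$.
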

\begin{proof} This follows from the fact that the period map for a Hilbert modular
variety is an embedding together with uniqueness of the Mumford
extension.
\end{proof}
\begin{proposition}\label{cohomology sheaves matrix}
For any subset $I\subset \{1,\ldots,n\}$ define $I^c$ to be the
complement of $I$, $|m_I|=\sum_{i\in I}m_i$ and
$\sC_{I}=\bigotimes_{i\in I}\sL_i^{m_i+2} \otimes \bigotimes_{i\in
I^{c}} \sL_i^{-m_i}$. Then one has the formula
$$
\sC^{P,l}(E_m,\theta_m)=\bigoplus_{\stackrel{I\subset
\{1,\ldots,n\},}{|m_I|+|I|=P,|I|=l}}\sC_{I}.
$$
\end{proposition}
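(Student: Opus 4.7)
The plan is to decompose the Higgs subcomplex $\Omega^*_P(E_m,\theta_m)$ over $\bar X$ as a tensor product of $n$ two-term complexes, one per factor, and then apply Künneth. Using the decomposition $(E_m,\theta_m)=\bigotimes_i (E_{i,m_i},\theta_{i,m_i})$ together with $(E_{i,m_i},\theta_{i,m_i})\cong S^{m_i}(E_{i,1},\theta_{i,1})$, Proposition \ref{basic logarithmic higgs bundles} gives the explicit Hodge decomposition $E_{i,m_i}=\bigoplus_{k=0}^{m_i}\sL_i^{m_i-2k}$ with $E_{i,m_i}^{m_i-k,k}=\sL_i^{m_i-2k}$. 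The Leibniz rule for the symmetric power, combined with Proposition \ref{basic logarithmic higgs bundles}, shows that $\theta_{i,m_i}$ takes image in the $\sL_i^2$-direct summand of $\Omega^1_{\bar X}(\log S)$ and restricts on $\sL_i^{m_i-2k}$ to a scalar $(m_i-k)$ multiple of the tautological isomorphism $\sL_i^{m_i-2k}\xrightarrow{\sim} \sL_i^{m_i-2k-2}\otimes \sL_i^2$ for $0\le k<m_i$, and to zero for $k=m_i$.

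Using $\Omega^l_{\bar X}(\log S)=\bigoplus_{|J|=l}\bigotimes_{j\in J}\sL_j^2$, I would then reorganize the terms of $\Omega^*_P(E_m,\theta_m)$: a direct summand indexed by $J\subset\{1,\ldots,n\}$ of size $l$ and by a Hodge multi-index $(p_i)$ with $\sum p_i=P-l$ corresponds, via the substitution $P_i=p_i+1$ for $i\in J$ and $P_i=p_i$ for $i\notin J$, to the $(l_i)$-piece (where $l_i=1$ if $i\in J$ and $0$ otherwise) of the tensor product complex $K^*_{P_1,1}\otimes\cdots\otimes K^*_{P_n,n}$, where
$$K_{P_i,i}^*\ :\ E_{i,m_i}^{P_i,\,m_i-P_i}\ \longrightarrow\ E_{i,m_i}^{P_i-1,\,m_i-P_i+1}\otimes\sL_i^2$$
is placed in degrees $0$ and $1$ (with negative Hodge pieces taken to be zero) and has differential the $\sL_i^2$-component of $\theta_{i,m_i}$. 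Since $\theta_m=\sum_i 1\otimes\cdots\otimes\theta_{i,m_i}\otimes\cdots\otimes 1$ under the tensor decomposition, and each $\theta_{i,m_i}$ contributes only in the $\sL_i^2$-direction, one obtains an isomorphism of complexes
$$\Omega^*_P(E_m,\theta_m)\ \cong\ \bigoplus_{P_1+\cdots+P_n=P} K^*_{P_1,1}\otimes\cdots\otimes K^*_{P_n,n}.$$

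The cohomology sheaves of each $K^*_{P_i,i}$ are immediate from the first paragraph: the complex is acyclic unless $P_i\in\{0,m_i+1\}$, while $H^0(K^*_{0,i})=\sL_i^{-m_i}$ and $H^1(K^*_{m_i+1,i})=\sL_i^{m_i+2}$. Since every term is a locally free $\sO_{\bar X}$-module of finite rank, Künneth for the tensor product of complexes applies and yields
$$\sC^{P,l}(E_m,\theta_m)=\bigoplus_{\sum P_i=P,\ \sum l_i=l}\bigotimes_{i=1}^{n} H^{l_i}(K^*_{P_i,i}),$$
and only the multi-indices with each $P_i\in\{0,m_i+1\}$ survive. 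Encoding $I=\{i:P_i=m_i+1\}$, the surviving constraints become $|I|=l$ and $|m_I|+|I|=P$, and the corresponding tensor factor is $\bigotimes_{i\in I}\sL_i^{m_i+2}\otimes\bigotimes_{i\in I^c}\sL_i^{-m_i}=\sC_I$; summing over such $I$ gives the claimed formula. The main obstacle is the tensor decomposition in the second paragraph: verifying that the Higgs differentials match (including Koszul signs) under the multi-index reorganization is a direct but careful check based on Proposition \ref{basic logarithmic higgs bundles}, and is essentially all the content beyond the straightforward Künneth step.
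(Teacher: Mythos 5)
Your proof is correct, but it takes a genuinely different route from the paper's. The paper argues over the open part: by the proof of Proposition \ref{degeneration of ss}, each $\sC^{P,l}(E_m,\theta_m)$ is the Mumford good extension of the corresponding cohomology sheaf of the Higgs complex over $X$; by homogeneity the latter is determined by its fibre at the base point of $\HH^n$, where the K\"unneth decomposition of Proposition \ref{Kunneth decomposition} for $D_i=\HH$ reduces everything to the (omitted) rank-one computation on each factor. This route leans on the analytic input already assembled for Proposition \ref{degeneration of ss} (good extensions, the CKS norm estimates). You instead work directly with the logarithmic Higgs complex over $\bar X$ as an explicit complex of line bundles: Proposition \ref{basic logarithmic higgs bundles} plus the symmetric-power/tensor compatibilities identify $\Omega^*_P(E_m,\theta_m)$ with a direct sum of tensor products of two-term complexes $K^*_{P_i,i}$, each of which is either contractible (the differential is a nonzero multiple of the tautological isomorphism, for $1\leq P_i\leq m_i$) or has cohomology a single line bundle concentrated in one degree ($P_i\in\{0,m_i+1\}$); since every $K^*_{P_i,i}$ is split, the K\"unneth formula for tensor products of complexes of locally free sheaves applies with no Tor corrections, and the bookkeeping $I=\{i:P_i=m_i+1\}$ gives exactly the stated answer. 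Your approach is more elementary and self-contained (no appeal to good extensions or homogeneity beyond what is encoded in Proposition \ref{basic logarithmic higgs bundles}), and it produces the identification over $\bar X$ directly; the paper's approach is shorter on the page because it recycles the framework needed for Proposition \ref{degeneration of ss} anyway. The only point you rightly flag --- matching Koszul signs in the tensor decomposition --- is harmless: the signs can be absorbed into the direct-sum identification, and in any case a sign change of the differential of a two-term complex does not alter its cohomology sheaves.
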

\begin{proof}
By Lemma \ref{Mumford extension}, each
$\sC^{P,l}(E_m,\theta_m)$ is the good extension of the corresponding
cohomology sheaf for the Higgs complex, the restriction of
$\Omega_P^*(E_m,\theta_m)$ to $X$. By the homogeneity of the Higgs
complex, it suffices to carry out the computation at $0$, or
equivalently the computation of the pull-back Higgs complex over
$\HH^{n}$ at $z_0$. For simplicity, we shall not change the previous
notation when we carry out the calculation over $\HH^n$. First we
observe that over $\HH^{n}$ the Higgs complex decomposes
$$
\Omega^{*}_{P}(E_m,\theta_m)=\bigoplus_{P_1+\cdots+P_n=P}
\Omega^{*}_{P_1}(E_{1,m_1},\theta_{1,m_1})\otimes
\cdots\otimes\Omega^{*}_{P_n}(E_{n,m_n},\theta_{n,m_n}).
$$
Now, by the homogeneity of the Higgs complex, the condition in Appendix 8 in \cite{EV} is satisfied.
In fact, it suffices to check this condition at the point $z_0$.
Therefore, it follows from Appendix 8 in \cite{EV} that each cohomology sheaf of the previous complex decomposes into the following
form:
$$
\sC^{P,l}(E_m,\theta_m)=\bigoplus_{\stackrel{P_1+\cdots
+P_n=P,}{l_1+\cdots+l_n=l}}\sC^{P_1,l_1}(E_{1,m_1},\theta_{1,m_1})\otimes\cdots\otimes
\sC^{P_n,l_n}(E_{n,m_n},\theta_{n,m_n}).
$$
Now we calculate the single factor
$\sC^{P_1,l_1}(E_{1,m_1},\theta_{1,m_1})$: From the Higgs complex
$$
\begin{adjustbox}{scale=0.8}
\xymatrix{
\,
& \,
& \mathcal{L}^{m_1}_1 \ar[d]^{\cong}
& \oplus
& \cdots
& \oplus
& \mathcal{L}^{-m_1+2}_1 \ar[d]^{\cong}
& \oplus
& \mathcal{L}^{-m_1}_1 \ar[d]^{}
\\
\mathcal{L}^{m_1}_1 \otimes \Omega_\mathbb{H}
& \oplus
& \mathcal{L}^{m_1-2}_1 \otimes \Omega_\mathbb{H}
& \oplus
& \cdots
& \oplus
& \mathcal{L}^{-m_1}_1 \otimes \Omega_\mathbb{H}
& \oplus
& 0,
}
\end{adjustbox}
$$

and the isomorphism $\Omega_{\HH}\cong \sL_1^2$, we see that
$$
\sC^{P_1,l_1}(E_{1,m_1},\theta_{1,m_1})=\left\{
                                          \begin{array}{lll}
                                            \sL_1^{-m_1} , & \hbox{if $P_1=0,l_1=0$;} \\
                                             \sL_1^{m_1+2} , & \hbox{if
                                             $P_1=m_1+1,l_1=1$;}\\
                                             0, &\hbox{otherwise.}
                                          \end{array}
                                        \right.
$$
The remaining step is elementary.
\end{proof}
Since $C_I$ is a tensor product of powers of $\sL_i$'s, it is locally free.
The above proposition and Proposition \ref{degeneration of ss} imply
the following
\begin{corollary}\label{algebraic description}
Let $0\leq k\leq 2n$ and $F^.$ be the Hodge filtration on
$H^k(X,\V_m)$. For each $0\leq P\leq |m|+k$, one has a natural
isomorphism
$$
Gr_{F}^{P}H^k(X,\V_m)\cong \bigoplus_{\stackrel{I\subset
\{1,\ldots,n\},}{|m_I|+|I|=P}}H^{k-|I|}(\bar X,\sC_I).
$$
\end{corollary}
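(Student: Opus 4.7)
The plan is to deduce this corollary as a direct combination of Proposition \ref{degeneration of ss} (which gives the degeneration of the Higgs-to-de Rham spectral sequence in the locally homogeneous setting) and Proposition \ref{cohomology sheaves matrix} (which identifies the Higgs cohomology sheaves explicitly as direct sums of the line bundles $\sC_I$).

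First I would verify that the hypotheses of Proposition \ref{degeneration of ss} are met in our situation. The Hilbert modular variety $X$ is a smooth arithmetic quotient of the hermitian symmetric domain $\HH^n$, and $\bar X$ is a smooth toroidal compactification; moreover, the $\R$-PVHS $\V_m$ constructed in \S\ref{preliminaries} is manifestly locally homogeneous, being built from tensor and symmetric powers of the tautological VHS associated to the standard representation of each factor $G_i$ of $G(\R)$. Hence Proposition \ref{degeneration of ss} applies and yields the natural isomorphism
$$
Gr_{F}^{P}H^k(X,\V_m)\simeq \bigoplus_{l=0}^{n}H^{k-l}(\bar X, \sC^{P,l}(E_m,\theta_m)).
$$

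Next I would substitute the explicit description of the cohomology sheaves given by Proposition \ref{cohomology sheaves matrix}. Since
$$
\sC^{P,l}(E_m,\theta_m)=\bigoplus_{\stackrel{I\subset\{1,\ldots,n\},}{|m_I|+|I|=P,\ |I|=l}}\sC_{I},
$$
the constraint $|I|=l$ allows us to replace the summation index $l$ by $|I|$, so that pulling the direct sum out of the cohomology and re-indexing the union over all $l$ as a single sum over subsets $I$ with $|m_I|+|I|=P$ gives
$$
Gr_{F}^{P}H^k(X,\V_m)\simeq \bigoplus_{\stackrel{I\subset \{1,\ldots,n\},}{|m_I|+|I|=P}}H^{k-|I|}(\bar X,\sC_I),
$$
which is exactly the claimed isomorphism.

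There is essentially no obstacle here, since the two inputs have already been established and the argument is a clean concatenation followed by a bookkeeping reindexing. The only thing to double-check is that the naturality of the isomorphism in Proposition \ref{degeneration of ss} is compatible with the K\"{u}nneth-type splitting of $\sC^{P,l}$ from Proposition \ref{cohomology sheaves matrix}, but this is automatic because the splitting comes from a decomposition of the Higgs complex itself into a direct sum of subcomplexes (as in Proposition \ref{Kunneth decomposition}), and sheaf hypercohomology commutes with finite direct sums.
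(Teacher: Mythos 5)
Your proposal is correct and is essentially identical to the paper's own (very brief) proof, which likewise just concatenates Proposition \ref{degeneration of ss} with Proposition \ref{cohomology sheaves matrix} and reindexes the sum over $l$ by the subsets $I$ with $|I|=l$. The hypothesis check and the remark about compatibility with the direct-sum splitting of the Higgs complex are fine and consistent with what the paper takes for granted.
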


\section{A vanishing theorem of Mok}
In this section we deduce a certain vanishing result of global
sections of $\sC_I$ from a vanishing theorem of N. Mok, which
contribute to a priori information about the sheaf cohomologies in
Corollary \ref{algebraic description}. We first recall the following
\begin{definition}\label{properly seminegative}
Let $M$ be a complex manifold and $(F,h)$ a hermitian holomorphic
vector bundle over $M$. Let $\Theta$ be the curvature tensor of
$(F,h)$. $(F,h)$ is said to be semi-positive (in the sense of
Griffiths) at the point $x\in M$, if for any non-zero tangent vector
$v\in T_{X,x}$ and any non-zero vector $e\in F_x$, $\Theta(e,\bar
e,v,\bar v)\geq 0$. It is said to be properly semi-positive if
furthermore for certain non-zero vectors $e_0$ and $v_0$ and has
$\Theta(e_0,\bar e_0, v_0, \bar v_0)=0$.
\end{definition}
The significance of the notion \emph{properly semi-positive} in the
case of locally hermitian symmetric domains lies in the following
theorem due to N. Mok, as a direct consequence of his metric
rigidity theorem on irreducible locally homogenous bundles:
\begin{theorem}[Mok]\label{vanishing result of Mok}
Let $\Omega$ be a hermitian symmetric domain and $\Gamma$ a
torsion-free discrete subgroup of $G=Aut^0(\Omega)$ such that the
quotient $M=\Gamma \backslash \Omega$ is irreducible (i.e., not a
product of two complex manifolds of positive dimensions) and has
finite volume with respect to the canonical metric. Let $(F,h)$ be a
non-trivial irreducible locally homogenous vector bundle over $M$
with an invariant hermitian metric $h$. If $(F,h)$ is properly
semi-positive at one point (and hence for all points), then
$H^0(X,F)=0$.
\end{theorem}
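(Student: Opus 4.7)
The plan is to derive the vanishing from Mok's metric rigidity theorem for irreducible locally homogeneous Hermitian bundles on irreducible finite-volume locally Hermitian symmetric spaces. Assume, toward a contradiction, that $s \in H^0(M, F)$ is non-zero.

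First I would apply the Bochner--Kodaira identity to the holomorphic section $s$. Using $\bar\partial s = 0$ together with Griffiths semi-positivity of $(F,h)$, one obtains the pointwise $(1,1)$-form identity
\begin{equation*}
\partial\bar\partial \|s\|^2_h \;=\; \langle D's,\, D's\rangle_h \;-\; \langle \Theta(F)\,s,\,s\rangle_h,
\end{equation*}
in which both terms on the right carry controlled signs. Tracing against the canonical K\"{a}hler--Einstein metric on $M$ and integrating over a smooth exhaustion of $M$ by relatively compact subdomains, I would then show that the boundary contributions vanish in the limit; this step uses the finite volume of $M$ together with Mok's sharp asymptotic estimates on the invariant metric $h$ and on the K\"{a}hler--Einstein volume form near the cusps. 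The limiting identity forces both $D's \equiv 0$ and $\langle \Theta(F) s, s\rangle_h \equiv 0$ pointwise on $M$.

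Next I would exploit irreducibility. The first conclusion says that $s$ is parallel with respect to the Chern connection, so $\sO_M\cdot s$ is a flat invariant holomorphic sub-line-bundle of $F$; by the orthogonal splitting principle for parallel subbundles, it splits off as a direct summand of $(F,h)$. Irreducibility of the locally homogeneous bundle $(F,h)$ then forces $F = \sO_M\cdot s$ to be itself the trivial line bundle with flat invariant metric, so its curvature vanishes identically. This contradicts the hypothesis that $(F,h)$ is properly semi-positive at some (hence, by homogeneity, every) point, completing the argument.

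The main obstacle will be the boundary analysis in the integration-by-parts step on the non-compact finite-volume manifold $M$. For compact $M$ this reduces to Nakano's classical Bochner principle; in the finite-volume non-compact setting it requires the delicate control of the asymptotic behaviour of invariant metrics near the cusps that sits at the technical core of Mok's metric rigidity theory, and it is precisely this analytic input that upgrades semi-positivity on a single point to a global rigidity conclusion.
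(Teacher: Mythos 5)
There is a genuine gap, and it sits at the heart of your argument. The paper itself does not prove this statement --- it only cites Mok's book (Ch.\ 10, \S 2.2--2.3 of \cite{Mo}) --- so your proposal is not an alternative route to the paper's proof but an attempt at a from-scratch argument, and that attempt fails. For a holomorphic section $s$ of a Hermitian bundle one has
$i\partial\bar\partial\|s\|_h^2(v,\bar v)=\|D'_v s\|^2-\Theta(s,\bar s,v,\bar v)$,
and with the paper's sign convention Griffiths \emph{semi-positivity} means $\Theta(s,\bar s,v,\bar v)\geq 0$. So the two terms on the right-hand side enter with \emph{opposite} signs, and after tracing and integrating (granting all your boundary analysis) you obtain only the equality of two non-negative integrals, $\int_M\|D's\|^2=\int_M \mathrm{tr}_\omega\,\Theta(s,\bar s)$, from which neither $D's\equiv 0$ nor $\Theta(s,\bar s)\equiv 0$ follows. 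The Bochner mechanism you invoke yields $H^0$-vanishing for semi-\emph{negative} bundles (where $\|s\|^2$ is plurisubharmonic); for positive bundles it proves nothing of the kind --- ample line bundles are positive and have many sections.

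A second, independent defect is that your argument nowhere uses the irreducibility of $M$ (equivalently of the lattice $\Gamma$), and the theorem is false without it: on a product $C_1\times C_2$ of two compact quotients of $\HH$, the line bundle $p_1^*K_{C_1}$ is non-trivial, irreducible, locally homogeneous and properly semi-positive (its curvature annihilates the $C_2$-directions), yet it has $h^0(C_1,K_{C_1})>0$ holomorphic sections. Any proof that never appeals to irreducibility of $M$ must therefore be wrong. Mok's actual proof runs through his metric rigidity theorem, whose engine is an integral formula over the characteristic bundle combined with Moore's ergodicity theorem for irreducible lattices; the null direction of the curvature (the ``properly'' in properly semi-positive) is what makes that integral formula bite. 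Finally, a smaller point: your closing contradiction is drawn against the wrong hypothesis --- by the paper's Definition \ref{properly seminegative} a flat bundle \emph{is} properly semi-positive, so the contradiction at the end must come from the assumed non-triviality of $F$, not from proper semi-positivity.
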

\begin{proof}
See the beginning of \S2.3 Chapter 10, \cite{Mo} which is a direct
application of Theorem 1 \S2.2 in loc. cit..
\end{proof}
The following proposition gives a sufficient condition for a
semi-positive locally homogenous bundle being properly
semi-positive.
\begin{proposition}\label{criterion of properly semipositive}
Let $M$ be as above, and $(F,h)$ an irreducible semi-positive
locally homogenous bundle over $M$. If there is a Higgs bundle
$(E,\theta)$ corresponding to a locally homogenous VHS $\W$ over $M$
such that $F\subset E^{p,q}$ and there exist non-zero vectors $e \in
F_{0}$ and $v \in T_{X,0}$ such that $\theta_{v}(e)=0$ (where
$\theta_{v}(e)$ is given by $\theta(e)(v)$), then $(F,h)$ is
properly semi-positive.
\end{proposition}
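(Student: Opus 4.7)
The plan is to combine the Griffiths curvature formula for the Hodge summand $E^{p,q}$ with the Gauss--Codazzi equation for the sub-bundle $F \subset E^{p,q}$, and then to observe that the hypothesis $\theta_v(e)=0$ forces the curvature pairing to be \emph{non-positive}, which together with semi-positivity pins it to zero.

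Concretely, I would first recall that for the Higgs bundle $(E,\theta)=\bigoplus E^{p,q}$ of the polarized VHS $\W$ with its Hodge metric, the Chern connection on each summand $E^{p,q}$ has Griffiths curvature
\[
\Theta_{E^{p,q}}(e,\bar e, v, \bar v) = \|\theta_v(e)\|^2 - \|\theta_v^*(e)\|^2,
\]
where $\theta_v\colon E^{p,q}\to E^{p-1,q+1}$ is contraction with $v$ and $\theta_v^*\colon E^{p,q}\to E^{p+1,q-1}$ is the metric adjoint of the Higgs field at the neighboring level (so the only positive contribution to the curvature comes from $\theta_v(e)$). For the holomorphic sub-bundle $F\subset E^{p,q}$ with the induced metric, the classical Gauss--Codazzi identity reads
\[
\Theta_F(e,\bar e, v, \bar v) = \Theta_{E^{p,q}}(e,\bar e, v, \bar v) - \|A(v)(e)\|^2,
\]
where $A\colon F\to (E^{p,q}/F)\otimes \Omega^1_M$ is the second fundamental form. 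Chaining the two identities yields
\[
\Theta_F(e,\bar e, v, \bar v) = \|\theta_v(e)\|^2 - \|\theta_v^*(e)\|^2 - \|A(v)(e)\|^2.
\]

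Plugging in the hypothesized non-zero $e\in F_0$ and $v\in T_{X,0}$ with $\theta_v(e)=0$, the right-hand side collapses to $-\|\theta_v^*(e)\|^2 - \|A(v)(e)\|^2\le 0$, while the assumed semi-positivity of $F$ forces the left-hand side to be $\ge 0$. Hence $\Theta_F(e,\bar e, v, \bar v) = 0$, which is exactly the definition of proper semi-positivity at the point $0$; as a bonus, one reads off $\theta_v^*(e)=0$ and $A(v)(e)=0$ as well. By local homogeneity, proper semi-positivity at $0$ propagates to every point of $M$, which is all that is needed to feed into Mok's vanishing theorem.

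I expect the only real technical point to be pinning down the sign conventions: the Griffiths formula must place $\|\theta_v(e)\|^2$ as the \emph{positive} term, and Gauss--Codazzi must bound $\Theta_F$ from \emph{above} (not below) by $\Theta_{E^{p,q}}|_F$ so that the second fundamental form enters with a minus sign. Both are standard in the Griffiths--Schmid framework, but they must be aligned with the polarization of $\W$ fixed in Section~\ref{preliminaries}; once that bookkeeping is done, the chain of inequalities is essentially forced.
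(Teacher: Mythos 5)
Your proof is correct and follows essentially the same route as the paper: apply Griffiths' curvature formula to $E^{p,q}$, note that $\theta_v(e)=0$ kills the only positive term, and sandwich the result between the resulting $\leq 0$ and the assumed semi-positivity $\geq 0$. The only (harmless) difference is in handling the sub-bundle: the paper observes that the second fundamental form of $F\subset E^{p,q}$ vanishes outright because $F$ is an irreducible (hence orthogonal, holomorphic) summand of the homogeneous bundle $E^{p,q}$, whereas you retain the Gauss--Codazzi term $-\|A(v)e\|^2$ and use only that it enters with a favorable sign, which is marginally more robust but yields the same conclusion.
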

\begin{proof}
Because $F$ is an irreducible component of $E^{p,q}$, the second
fundamental form vanishes (see Chapter I in \cite{Ko}). So we can
use Griffiths curvature formula (see Lemma 7.18 in \cite{Sch} for
example) for $E^{p,q}$ to calculate the curvature of $F$:
$$
\Theta(e,\overline{e'})=(\theta(e), \theta(e'))-(\theta^{\dag}(e),
\theta^{\dag}(e')) \ \ \forall e,e' \in E^{p,q},
$$
where $\theta^{\dag}$ is the $\Hom(E^{p,q},E^{p+1,q-1})$-valued
$(0,1)$-form determined by requiring $\theta^{\dag}_{\bar v}$ be the
adjoint of $\theta_v$ relative to the Hodge metric. By assumption,
$\theta_{v}(e)=0$. So
$$
\Theta(e,\bar e, v,\bar{v})=(\theta_{v}(e),
\theta_{v}(e))-(\theta^{\dag}_{\bar{v}}(e),
\theta^{\dag}_{\bar{v}}(e))=-(\theta^{\dag}_{\bar{v}}(e),
\theta_{\bar{v}}^{\dag}(e))\leq 0.
$$
Since $F$ is semi-positive, $\Theta(e,\bar e, v,\bar{v})\geq 0$.
Therefore $\Theta(e,\bar e, v,\bar{v})=0$.
\end{proof}
Note that the line bundle $\sC_I$ is of the form
$\bigotimes_{i=1}^{n}\sL_{i}^{s_i}$ for an $n$-tuple of integers
$(s_1,\ldots,s_n)$.
\begin{proposition}\label{Mok vanishing}
For an $n$-tuple $(s_1,\ldots,s_n) \in \N_{0}^n \setminus
\{(0,\ldots,0)\}$ one has the vanishing
$$
H^0(X,\bigotimes_{i=1}^{n}\sL_{i}^{s_i}|_{X})=0
$$
if one of the entries $s_i$ is zero.
\end{proposition}
\begin{proof}
Take $m=(s_1,\ldots,s_n)$. By the discussion at the beginning of
\S\ref{logarithmic higgs cohomology}, it is clear that
$\bigotimes_{i=1}^{n}\sL_{i}^{s_i}|_X$ is the first Hodge bundle,
namely the $E^{|m|,0}$ part of the corresponding Higgs bundle to
$\V_m$. It follows from the Griffiths curvature formula as in the
proof of Proposition \ref{criterion of properly semipositive} that
$E^{|m|,0}$ is semi-positive. Assume $s_1=0$ without loss of
generality. Let $e_{i}$ be a nonvanishing local section of $\sL_i$
at $z_0$. Via the natural projection map, $X$ and $\HH^{n}$ are
analytically local isomorphic to each other. A generator of the
tangent subspace $T_{\HH_1,i}\subset T_{\HH^{n},z_0}$ gives rise to
a tangent vector $v$ of $T_{X,0}$. By Proposition \ref{basic
logarithmic higgs bundles}, the Higgs field $\theta_{i,1}$ along the
direction $v$ acts trivially on $e_{i}$ for $2\leq i\leq n$. For
$\theta_m=\bigotimes_{i=2}^{n}\theta_{i,1}^{ s_i}$, it follows that
for the local section $e=\bigotimes_{i=2}^{n}e_i^{ s_i}$ of
$\bigotimes_{i=1}^{n}\sL_{i}^{s_i}$, $\theta_{m,v}(e)=0$. By
Proposition \ref{criterion of properly semipositive} and Theorem
\ref{vanishing result of Mok}, the proposition follows.
\end{proof}
We remark that the condition on the irreducibility of $\Gamma$ in
the above result is crucial: for a product of modular curves the
analogous statement does not hold.

\section{$L^2$-cohomology of local systems over Hilbert modular variety and the pure Hodge structure}
\label{intersection cohomology} Let $g$ (resp. $h$) be the group
invariant metric on $X$ (resp. $\V_m$) (unique up to constant), and
let $H^k_{(2)}(X^*,\V_m)$ be the $L^2$-cohomology group of degree
$k$ with coefficients in $\V_m$ with respect to the invariant
metrics. Our principal aim in this section is to study the pure Hodge
structure on $H^k_{(2)}(X^*,\V_m)$ given by theory of $L^2$-harmonic forms.

Let us remind the reader that the local system
$\V_m$ in our case has been assumed to be non-trivial from the
beginning, i.e., $m\neq 0$. Let $D$ be the $C^{\infty}$-
Gau{\ss}-Manin connection and its $(1,0)$-part and $(0,1)$-part
decomposition reads
$$ D = d' + d''. $$ Let $\bar \partial$ be the holomorphic
structure of $E_m$ and $\partial$ the $(1,0)$-part of the metric
connection of the Hodge metric. We define
$$ \ D'=\partial +\bar \theta, \ D''=\bar \partial +
\theta. $$ Our notation follows \S4 in \cite{Sim} which differs
unfortunately from Zucker \cite{Zuc1}. On page 256 loc. cit. the
operators $D',D'', d'_{\rho}, d^{''}_{\rho}$ are defined, which are
$\partial, \bar \partial, \theta, \bar\theta$ respectively. The
operators $\mathfrak{D}', \mathfrak{D}''$ defined on page 265 loc.
cit. are our $D',D''$ respectively. For any of the above operators
$\mathcal{D}$, we denote its $L^{2}$-adjoint by $\mathcal{D}^{*}$
and Laplacian operator by $\square_{\mathcal{D}}$.

For a subset $I\subset \{1 ,\ldots,n\}$, we denote by $\sC_I^0$ the
restriction of $\sC_I$ to $X$, which is a locally homogenous line
bundle over $X$, and by $H^*_{(2)}(X,\sC_I^0)$ the $L^2$-Dolbeault
cohomology of $\sC_I^0$ with respect to the \emph{invariant} metric
$g$ of $X$ and the group invariant metric $h$ on $\sC_I^0$. That is,
it is the cohomology of the complex
$(A^{0,*}_{(2)}(X,\sC_{I}^0),\bar
\partial)$, where $A^{0,i}_{(2)}(X,\sC_I^0)$ is the space of $L^2$-integrable
smooth $(0,i)$-forms $\phi$ over $X$ with coefficients in $\sC_I^0$
such that $\bar \partial\phi$ is $L^2$. Let $\mathfrak h^i_{\bar
\partial,(2)}(X,\sC_I^0)\subset A^{0,i}_{(2)}(X,\sC_I^0)$ be the
subspace of $L^2$-$\bar
\partial$-harmonic forms. By the standard $L^2$-harmonic theory, one
has a natural injection $\mathfrak h^i_{\bar
\partial,(2)}(X,\sC_I^0)\to H^i_{(2)}(X,\sC_I^0)$, and it is an
isomorphism iff the range of $\bar \partial$ is closed (in the
Hilbert space of weakly differentiable $L^2$-forms). But we know
from the finite dimensionality of $H_{(2)}^*(X^*,\V_m)$ that the
range of $D$ is closed, and hence the range of $\square_{D}$ is
closed. As shown in page 609 \cite{Zuc3}
$\square_{D}=2\square_{D^{''}}$ holds in the strict operator sense,
it follows that $\square_{D^{''}}$ has closed range and thus the
range of $D^{''}$ is closed as well. Now that $\theta$ acts on
$\sC_I^{0}$ by zero, the range of $\bar
\partial$ is closed. In summary, one has a natural isomorphism
$$
H^i_{(2)}(X,\sC_I^0)\cong \mathfrak h^i_{\bar
\partial,(2)}(X,\sC_I^0),
$$
and particularly the finite dimensionality of
$H^i_{(2)}(X,\sC_I^0)$.
\begin{theorem}\label{vanishing of cohomologies of cohomology sheaf}
Let $I\subset \{1,\ldots,n\}$ be a subset. Then for $i\neq n-|I|$,
it holds that
$$
\dim H^i_{(2)}(X,\sC_I^0)=0.$$
\end{theorem}
In order to prove it, we need to employ the full machinery of
$L^2$-harmonic theory. Let $A^{n}_{(2)}(X,\V_m)$ (resp.
$A^{p,q}_{(2)}(X,\V_m)$) be the space of smooth $L^2$-$n$-forms
(resp. type $(p,q)$-forms) on $X$ with coefficients in $\V_m$.
Denote the subspace of $L^2$-harmonic forms by
$$
\mathfrak{h}_{(2)}^{n}(X,\V_m)= \{ \alpha \in A^{n}_{(2)}(X,\V_m) |
\square_{D} \alpha =0 \},
$$
similarly the subspace of $L^2$-harmonic $(p,q)$-forms by
$\mathfrak{h}_{(2)}^{p,q}(X,\V_m)$. By Zucker \cite{Zuc1}, one has
natural isomorphisms
$$
H_{(2)}^n(X^*,\V_m)\cong
\mathfrak{h}_{(2)}^{n}(X,\V_m)=\bigoplus_{p+q=n}\mathfrak{h}_{(2)}^{p,q}(X,\V_m).
$$
By abuse of notation, in the following we denote again by
$(E_m,\theta_m)$ the restriction of $(E_m,\theta_m)$ to $X$, which
is a Higgs bundle over $X$. We consider the cohomology sheaf of the
Higgs complex of $(E_m,\theta_m)$ over $X$ at the $i$-th place:
$$
E_m\otimes
\Omega^{i-1}_X\stackrel{\theta_m^{i-1}}{\longrightarrow}E_m\otimes
\Omega^{i}_X\stackrel{\theta_m^{i}}{\longrightarrow} E_m\otimes
\Omega^{i+1}_X,
$$
and denote by $\theta_m^{*,i}$ the adjoint of $\theta_m^i$ with
respect to the Hodge metric on $E_m$ and the invariant metric on
$X$. We have the following
\begin{lemma}\label{criterion of cohomology sheaf}
The $i$-th cohomology sheaf of the Higgs complex can be
characterized by those sections of $E_m\otimes \Omega^{i}_X$
satisfying the equations $\theta_m^i=\theta_m^{*,i-1}=0$.
\end{lemma}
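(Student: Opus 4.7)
The plan is to reduce the lemma to a pointwise application of the finite-dimensional Hodge decomposition for a complex of Hermitian vector spaces. The crucial starting point is that $\theta_m$ is $\sO_X$-linear: the Higgs complex $(E_m \otimes \Omega^\bullet_X, \theta_m)$ consists of locally free $\sO_X$-modules equipped with $\sO_X$-linear differentials, so its $i$-th cohomology sheaf is determined by the fibrewise complex of finite-dimensional vector spaces. This is the step that turns an a priori sheaf-theoretic statement into elementary linear algebra.

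Next I would equip each fiber $(E_m \otimes \Omega^j_X)_x$ with the Hermitian inner product coming from the Hodge metric on $E_m$ together with the invariant metric on $\Omega^j_X$. With respect to these inner products, $\theta^{*,i-1}_{m,x}$ is by construction the Hermitian adjoint of the $\C$-linear map $\theta^{i-1}_{m,x}$. The integrability $\theta_m \wedge \theta_m = 0$ yields $\theta^i_{m,x} \circ \theta^{i-1}_{m,x} = 0$, so at every $x$ we have a bona fide complex of Hermitian vector spaces.

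With this setup, the standard finite-dimensional Hodge decomposition yields an orthogonal splitting
$$(E_m \otimes \Omega^i_X)_x = \mathrm{im}(\theta^{i-1}_{m,x}) \oplus \bigl(\ker \theta^i_{m,x} \cap \ker \theta^{*,i-1}_{m,x}\bigr) \oplus \mathrm{im}(\theta^{*,i}_{m,x}),$$
from which the inclusion of the middle summand into $\ker \theta^i_{m,x}$ followed by the quotient to $\ker \theta^i_{m,x}/\mathrm{im}(\theta^{i-1}_{m,x})$ is an isomorphism onto the $i$-th cohomology at $x$. Because this decomposition depends only on the smoothly varying metrics and the $\sO_X$-linear bundle maps $\theta^j_m$, the identification is canonical in $x$ and assembles into the sheaf-level characterization asserted in the lemma.

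I do not anticipate a serious obstacle here: the argument is an almost mechanical transfer of elementary Hodge-theoretic linear algebra through the fibrewise description. The one point worth verifying carefully is the compatibility of the pointwise identification with the sheaf structure, but this follows from the $\sO_X$-linearity of $\theta_m$ together with the smooth dependence of the Hodge metric on the base point, so the resulting subsheaf $\{s \mid \theta^i_m(s) = 0 = \theta^{*,i-1}_m(s)\}$ of $E_m \otimes \Omega^i_X$ is naturally isomorphic to the $i$-th cohomology sheaf of the Higgs complex, as required.
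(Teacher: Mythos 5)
Your overall strategy---identify the cohomology sheaf with $\ker\theta_m^i\cap(\mathrm{im}\,\theta_m^{i-1})^{\perp}$ via an orthogonal decomposition with respect to the Hodge metric, and then observe that $(\mathrm{im}\,\theta_m^{i-1})^{\perp}=\ker\theta_m^{*,i-1}$---is the same as the paper's. But the step you single out as crucial, namely that $\sO_X$-linearity of $\theta_m$ by itself reduces the computation of the cohomology sheaf to fibrewise linear algebra, is not correct as stated, and it is exactly at this point that the paper instead invokes \emph{homogeneity}. For a complex of locally free sheaves with $\sO_X$-linear differentials the cohomology sheaf is not in general the sheaf of sections of the fibrewise cohomology: for $\sO_{\C}\xrightarrow{\ \cdot z\ }\sO_{\C}$ the cohomology sheaf at the second spot is the skyscraper $\sO_{\C}/z\sO_{\C}$, a torsion sheaf, which cannot be a subsheaf of $\sO_{\C}$ cut out by pointwise linear equations; the ranks of the fibrewise maps jump and the fibrewise kernels do not assemble into the kernel sheaf. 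What your reduction actually requires is that $\theta_m^{i-1}$ and $\theta_m^{i}$ have locally constant rank, so that their kernels and images are genuine subbundles and the cohomology sheaf is locally free and computed fibrewise. In the present situation this holds because the Higgs bundle, the Higgs field and the Hodge metric are all locally homogeneous (the field is determined by its value at one point, as in the proof of Proposition \ref{degeneration of ss}); that is the content of the phrase ``by the homogeneity'' in the paper's one-line argument.

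A second, related point needs attention: your pointwise Hodge decomposition a priori produces only a \emph{smooth} orthogonal complement of $\mathrm{im}\,\theta_m^{i-1}$ inside $\ker\theta_m^i$, because the Hodge metric, and hence $\theta_m^{*,i-1}$, is not holomorphic. To characterize the holomorphic (coherent) cohomology sheaf by the equations $\theta_m^i=\theta_m^{*,i-1}=0$ one must know that this orthogonal complement is a holomorphic subbundle. Again homogeneity supplies this: the splitting at the base point is invariant under the isotropy group and propagates to a group-invariant, hence simultaneously holomorphic and metric, decomposition $\ker\theta_m^i=\sC\oplus\mathrm{im}\,\theta_m^{i-1}$, which is what the paper asserts. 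With these two ingredients made explicit your argument coincides with the paper's; without them the ``mechanical transfer'' to fibrewise linear algebra does not go through.
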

\begin{proof}
Let $\sC=\frac{{\rm Ker}\ \theta_m^i}{{\rm Im}\ \theta_m^{i-1}}$ be
the cohomology sheaf. By the homogeneity, one has the holomorphic
and metric decomposition of Hermitian vector bundles
$$
{\rm Ker}\ \theta_m^i=\sC\oplus {\rm Im}\ \theta_m^{i-1}.
$$
So $\sC={\rm Ker}\  \theta_m^i\cap ({\rm Im}\
\theta_m^{i-1})^{\bot}$. On the other hand, one has clearly that
${\rm Ker}\ \theta_m^{*,i-1}= ({\rm Im}\ \theta_m^{i-1})^{\bot}$.
The lemma follows.
\end{proof}

\begin{lemma}\label{two spaces of harmonic forms}
For each $I$, one has the inclusion $\mathfrak h^i_{\bar
\partial,(2)}(X,\sC_I^0)\subset \mathfrak h^{|I|,i}_{(2)}(X,\V_m)$.
\end{lemma}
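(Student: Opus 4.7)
The plan is as follows. By Proposition \ref{cohomology sheaves matrix} combined with Lemma \ref{criterion of cohomology sheaf}, $\sC_I^0$ sits naturally as a holomorphic sub-bundle of $E_m^{|m_I|,|m|-|m_I|}\otimes \Omega^{|I|}_X$. Absorbing the $\Omega^{|I|,0}$-factor into the form-type yields an $L^2$-isometric inclusion $A^{0,i}(X,\sC_I^0)\hookrightarrow A^{|I|,i}(X,E_m)\subset A^{|I|,i}(X,\V_m)$; under this inclusion a section $\omega$ on the left becomes an $E_m$-valued $(|I|,i)$-form on the right. It therefore suffices to show that if $\omega$ is $L^2$-$\bar\partial$-harmonic in $\sC_I^0$, then $\square_D\omega=0$, where $D$ is the flat connection on $\V_m$.

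The main tool will be Simpson's K\"ahler identities for the Hodge metric on a polarized VHS. Decomposing $D=D'+D''$ with $D''=\bar\partial_{E_m}+\theta_m$ the total Dolbeault--Higgs operator on $\bigoplus_{p,q}A^{p,q}(X,E_m)$, one has $\square_D=2\square_{D''}$, so it is enough to verify $D''\omega=0$ and $(D'')^*\omega=0$.

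The relation $D''\omega=0$ breaks into two vanishings: $\theta_m\wedge\omega=0$ because $\sC_I^0\subset\ker\theta_m^{|I|}$ by Lemma \ref{criterion of cohomology sheaf}, and $\bar\partial_{E_m}\omega$ agrees with $\bar\partial_{\sC_I^0}\omega=0$ since $\sC_I^0$ is a holomorphic sub-bundle of $E_m\otimes\Omega^{|I|}_X$. For $(D'')^*\omega=\bar\partial_{E_m}^*\omega+\theta_m^*\omega$ (adjoints as operators on forms), the summand $\theta_m^*\omega$ vanishes because, under the absorption of $\Omega^{|I|,0}$ into the form-type, the formal adjoint of wedging with $\theta_m$ restricts on $\sC_I^0$-valued forms to the sheaf-theoretic adjoint $\theta_m^{*,|I|-1}$ of the Higgs complex, and $\sC_I^0\subset\ker\theta_m^{*,|I|-1}$ again by Lemma \ref{criterion of cohomology sheaf}. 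The summand $\bar\partial_{E_m}^*\omega$ vanishes because the $K$-isotypic decomposition of the homogeneous bundle $E_m^{|m_I|,|m|-|m_I|}\otimes\Omega^{|I|}_X$ is orthogonal and parallel, so $\sC_I^0$ is a parallel summand whose second fundamental form in the ambient bundle is zero; consequently $\bar\partial_{E_m}^*\omega=\bar\partial_{\sC_I^0}^*\omega=0$.

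The main obstacle is the careful bookkeeping in the non-compact $L^2$-setting. Simpson's identity $\square_D=2\square_{D''}$ on $X$ depends on Zucker's $L^2$-theory for VHS over quasi-projective varieties, the same input used in Theorem \ref{Zucker conjecture} and Theorem \ref{pure hodge structure on intersection}. The identification of the sheaf-theoretic Higgs adjoint $\theta_m^{*,|I|-1}$ with the form-operator adjoint of $\theta_m\wedge(\cdot)$ is a pointwise verification that exploits the explicit $K$-homogeneous structure from Proposition \ref{basic logarithmic higgs bundles} and the K\"unneth decomposition of Proposition \ref{Kunneth decomposition}; once this local matching is in hand the inclusion follows directly from the computations above.
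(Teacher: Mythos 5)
Your argument is correct, and it is in substance a self-contained reconstruction of exactly what the paper delegates to the literature: the published proof consists only of the remark that $\sC_I^0$ is a (holomorphic and metric) direct summand of the $|I|$-th cohomology sheaf of the subcomplex $\Omega^*_{|m_I|+|I|}(E_m,\theta_m)$, followed by a citation of (5.14) and (5.22) in \cite{Zuc1}. Those two references contain precisely your two steps: the $L^2$ K\"ahler identity $\square_D=2\square_{D''}$ for locally homogeneous VHS (the paper uses the same input, via Corollary 3.20 of \cite{Zuc1}, in the proof of Proposition \ref{pq component of harmonic forms}), and the fact that the homogeneous splitting ${\rm Ker}\,\theta_m^{|I|}=\sC_I^0\oplus{\rm Im}\,\theta_m^{|I|-1}$ from Lemma \ref{criterion of cohomology sheaf} is simultaneously holomorphic and orthogonal, which is what makes $\theta_m\wedge(\cdot)$, its pointwise adjoint, and the second fundamental form all vanish on $\sC_I^0$-valued forms, so that $\bar\partial$-harmonicity in $\sC_I^0$ upgrades to $D''$-harmonicity, hence $D$-harmonicity, in $\V_m$. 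The one point you should make explicit rather than wave at is the non-compact bookkeeping you mention at the end: the invariant metric on $X$ is complete, so by Gaffney/Andreotti--Vesentini the $L^2$-harmonic forms are exactly the closed and co-closed $L^2$ forms and the formal adjoint computations above are legitimate; this is the content of Zucker's $L^2$-theory that the citation silently supplies. With that caveat your derivation is a valid, and more transparent, substitute for the paper's two-line proof.
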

\begin{proof}
This follows from (5.22) and (5.14) in \cite{Zuc1}. We need to
explain the notation. By Proposition \ref{cohomology sheaves
matrix}, $\sC_I^0$ is a direct summand of the $|I|$-th cohomology
sheaf of the Higgs subcomplex $\Omega_{|m_I|+|I|}^*(E_m,\theta_m)$.
By (5.22) and (5.14) in \cite{Zuc1}, it follows that
$$
\mathfrak h^i_{\bar
\partial,(2)}(X,\sC_I^0)\subset \mathfrak h^{|I|,i}_{(2)}(X,\V_m).
$$
\end{proof}
Let $I=(i_1,\ldots,i_p),J=(j_1,\ldots,j_q)$ be two multi-indices
with $1\leq i_1<\cdots<i_p\leq n$ and $1\leq j_1<\cdots<j_q\leq n$.
Put $dz_{I}=dz_{i_1}\wedge \cdots\wedge dz_{i_p}$ and
$\overline{dz_{J}}=d\bar z_{j_1}\wedge\cdots\wedge d\bar z_{j_q}$.
Let $\mathfrak h_{(2)}(I,J;\V_m)$ be the subspace of $ \mathfrak
h^{p,q}_{(2)}(X,\V_m)$ consisting of those elements whose pull-back
to $\HH^{n}$ are of form $f_{I,\bar J}dz_I\wedge \overline{dz_J}$.
The following lemma is the $L^2$-analogue of Proposition 1.2 in
\cite{MS} and its proof holds verbatim for $L^2$-harmonic forms.

\begin{lemma}\label{fine decomposition of harmonic forms}
One has an orthogonal decomposition into a direct sum of subspaces
of $L^2$-harmonic forms for each $(p,q)$:
$$
\mathfrak h^{p,q}_{(2)}(X,\V_m)=\bigoplus_{\stackrel{I,J\subset
\{1,\ldots,n\},}{|I|=p,|J|=q}}\mathfrak h_{(2)}(I,J;\V_m).
$$
\end{lemma}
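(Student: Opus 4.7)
The plan is to follow the Matsushima--Shimura argument verbatim, passing to the universal cover $\HH^n$ where the product structure makes the fine decomposition transparent, and then descending to $X$ using $\Gamma$-invariance. The only point not already addressed in the compact case of \cite{MS} is that both the pointwise Hodge inner product and the $L^2$-integrability condition must be compatible with the bi-type decomposition, and both are immediate from the product structure of the metric.

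First I would lift any $\omega \in \mathfrak h^{p,q}_{(2)}(X,\V_m)$ to a $\Gamma$-invariant smooth $\V_m$-valued form $\tilde\omega$ on $\HH^n$ (the local system pulls back to the constant sheaf $V_m$ since $\HH^n$ is simply connected). Using the global frame $\{dz_i, d\bar z_i\}_{i=1}^n$ adapted to the splitting $\HH^n = \HH_1\times\cdots\times\HH_n$, one writes uniquely
$$
\tilde\omega \;=\; \sum_{|I|=p,\,|J|=q} f_{I,\bar J}\,dz_I\wedge\overline{dz_J},
$$
with smooth $V_m$-valued coefficients $f_{I,\bar J}$; denote the $(I,J)$-summand by $\tilde\omega_{I,J}$. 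Since $\Gamma\subset SL(2,\R)^n$ acts factor by factor, each $\gamma \in \Gamma$ sends $dz_i$ and $d\bar z_i$ to scalar multiples of themselves. Therefore $\gamma^*$ preserves the fine bidegree $(I,J)$, and each $\tilde\omega_{I,J}$ is again $\Gamma$-invariant and descends to a well-defined smooth $\V_m$-valued form $\omega_{I,J}$ on $X$.

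The second step is to show that the decomposition $\omega = \sum \omega_{I,J}$ is simultaneously $L^2$-orthogonal and preserved by the Laplacian $\square_D$. The product Poincar\'{e} metric on $\HH^n$ is K\"{a}hler, and the frame elements $dz_I\wedge\overline{dz_J}$ are pointwise orthogonal for distinct multi-indices $(I,J)$ regardless of the Hodge metric on $V_m$; orthogonality is therefore preserved upon integrating over a fundamental domain, so each $\omega_{I,J}$ is automatically $L^2$. Moreover, on the K\"{a}hler product $\HH^n$ the $\bar\partial$-Laplacian splits as $\square_{\bar\partial}=\sum_{i=1}^n\square_{\bar\partial,i}$, each summand acting only in the $i$-th factor and hence preserving the multi-type $(I,J)$. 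By the K\"{a}hler identities $\square_D = 2\square_{\bar\partial}$, so the full Laplacian respects the fine decomposition, and from $\square_D\omega=0$ one obtains $\square_D\omega_{I,J}=0$ for every $(I,J)$. Assembling these facts yields the orthogonal direct sum in the statement.

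The only place one might worry, compared to the cocompact treatment in \cite{MS}, is whether truncating a harmonic form to a fine-type summand could destroy $L^2$-integrability or harmonicity. Both concerns are ruled out at once by the pointwise orthogonality of the frame together with the factor-by-factor nature of the product metric, the product connection, and the $\Gamma$-action; I do not anticipate any further obstacle, and this is precisely the sense in which the \cite{MS} proof ``goes through verbatim.''
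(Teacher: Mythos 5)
Your overall strategy is exactly the one the paper intends: the paper's proof of this lemma is a one-line deferral to Proposition 1.2 of \cite{MS} with the remark that the argument carries over verbatim to $L^2$-harmonic forms, and your lift to $\HH^n$, the observation that $\Gamma$ acts factor-by-factor so that each fine summand $\tilde\omega_{I,J}$ is again $\Gamma$-invariant, and the pointwise orthogonality of the frame $dz_I\wedge\overline{dz_J}$ (which settles both the $L^2$-orthogonality and the $L^2$-integrability of the summands) are precisely the points one must make explicit. On a complete manifold the passage from $\square_D\omega_{I,J}=0$ to genuine $L^2$-harmonicity is also unproblematic, so the only new feature of the non-compact setting is handled correctly.

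There is, however, one step that fails as written: the identity $\square_D=2\square_{\bar\partial}$ is false for a non-trivial local system. The K\"ahler-type identities valid here are Simpson's (or Zucker's, Corollary 3.20 in \cite{Zuc1}): $\square_D=2\square_{D''}$ with $D''=\bar\partial+\theta$, where $\theta$ is the Higgs field; the operator $\bar\partial$ alone does not control $\square_D$ unless $\theta=0$, i.e.\ unless $\V_m$ is unitary, which is excluded by hypothesis. Your argument therefore only establishes the fine decomposition for constant coefficients. The repair is not automatic but is available from the paper's setup: one has $\square_{D''}=\square_{\bar\partial}+(\theta\theta^*+\theta^*\theta)$ after the cross terms cancel, and by Proposition \ref{basic logarithmic higgs bundles} the Higgs field decomposes as $\theta=\sum_i\theta_i\,dz_i$ with $\theta_i$ acting only on the $i$-th tensor factor of $V_m$; for $i\neq j$ the operators $\theta_i\,dz_i\wedge(\cdot)$ and the adjoint of $\theta_j\,dz_j\wedge(\cdot)$ anticommute (the endomorphism parts commute, the wedge and contraction anticommute), so the off-diagonal contributions to $\theta\theta^*+\theta^*\theta$ cancel and $\square_{D''}$ does preserve the multi-type $(I,J)$. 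You need to include this verification — or argue directly with $D''$ rather than $\bar\partial$ — for the claim ``the full Laplacian respects the fine decomposition'' to be justified.
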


The next proposition gives the $L^2$-analogue of Proposition 4.1-4.3
in \cite{MS} in the case of non-trivial local systems.
\begin{proposition}\label{pq component of harmonic forms}
Let $I, J$ be as above. Then $\mathfrak h_{(2)}(I,J;\V_m)=0$ unless
$I\cup J$ (as a set) is equal to $\{1,\ldots,n\}$ and $I \cap J
=\emptyset$.
\end{proposition}

\begin{proof} Let $\alpha=\alpha_{I,\bar J}\in \mathfrak h_{(2)}(I,J;\V_m)$.
We prove the statement by induction on $q=|J|$. When $q=0$,
$\alpha\in A^{p,0}(X,\V_m)=A^{0,0}(X,E_m\otimes \Omega^p_X)$.
Because $\alpha$ is $D$-harmonic and equivalently $D''$-harmonic,
one has $\bar \partial (\alpha)=0$, $\theta(\alpha)=0$ and
$\theta^*(\alpha)=0$ (see Corollary 3.20 in \cite{Zuc1}). This
implies that $\alpha$ is a global holomorphic section of $E_m\otimes
\Omega^p_X$ and by Lemma \ref{criterion of cohomology sheaf} it is
even a global section of the cohomology sheaf of the Higgs complex
at the $p=|I|$-th place. Thus it must be a global section of
$\sC^0_I=\bigotimes_{i\in I}\sL_i^{m_i+2} \otimes \bigotimes_{i\in
I^c} \sL_i^{-m_i}|_{X}$ by consideration of $\{I,J;\V_m\}$-type.\\

{\bf Case 1:} $I\neq \{1,\ldots,n\}$ and $m_i=0$ for all $i\in I^c$.
In this case, $\sC_I^0$ is of the type in Proposition \ref{Mok
vanishing}. Since $\alpha\in \mathfrak h^0_{\bar
\partial,(2)}(X,\sC_I^0)\subset
H^0(X, \sC_I^0)$, $\alpha=0$ by Proposition \ref{Mok vanishing}.\\

{\bf Case 2:} $m_i\neq 0$ for certain $i\in I^c$. In this case, we
recall the decomposition of the differential operator $D$ over the
space $A^{p}_{(2)}(X,\V_m)$ 
$$
D=D'+D''=d'+d''; \ D'=\partial +\bar \theta, \ D''=\bar \partial +
\theta,\ d'=\partial +\theta, d''=\bar \partial + \bar \theta.
$$
$\Box_{D}(\alpha)=0$ implies that $d^{''}(\alpha)=0$. Since
$\bar\partial \alpha=0$, it follows that $\bar \theta(\alpha)=0$.
Now we take a smooth open neighborhood $U$ of $0\in X$ with the
local coordinates $\{z_1,\ldots,z_n\}$ and $e$ be a holomorphic
basis of the line bundle $C_{I}^0$ on $U$. Write
$\alpha=f(z_1,...,z_{n})e$ over $U$. Assuming the following claim,
one has
$$
\bar \theta (\alpha)=\bar \theta(f(z)e)=\bar f(z) \bar \theta(e)=0,
$$
and then $f=0$. So $\alpha=0$. The following claim is a direct
consequence of Proposition \ref{basic logarithmic higgs bundles}.
\begin{claim} One has $\bar{\theta}(e) \neq 0$, where $e$ is as above and
$$
\bar \theta:  A_{(2)}^{(p,0)}(E^{|m_I|,|m|-|m_I|})\to
A_{(2)}^{(p,1)}(E^{|m_I|+1,|m|-|m_I|-1}).
$$
\end{claim}
\begin{proof}
Let $\theta_i=\theta_{\partial_{z_i}}$ be the Higgs field along the
tangent direction $\partial_{z_i}$. Let $e_i$ (resp. $e_i^*$) be a
local basis of $\sL_i$ (resp. $\sL_i^{-1}$) over $U$ such that
$\theta_i(e_i)=e_i^*$. By Proposition \ref{basic logarithmic higgs
bundles}, one has $\theta_i(e_j)=0$ for $j\neq i$, and of course
$\theta_i(e_j^*)=0$ for any $j$. Put $e_I^{m_I}=\bigotimes_{i\in
I}e_i^{\otimes m_I}$ and $e_{I^c}^{*m_{I^c}}=\bigotimes_{i\in
I^c}e_i^{*m_i}$. Then one has $e=dz_I\otimes e_I^{m_I}\otimes
e_{I^c}^{*m_{I^c}}$ up to an invertible holomorphic function, which
does not affect the proof. Recall the local formula of $\bar \theta$
(see \S1 in \cite{Sim}):
$$
\bar \theta=\sum_{i}\bar \theta_i d\bar{z}_i,
$$
where $\bar \theta_i$ is the adjoint of the matrix $\theta_i$ with
respect to the Hodge metric. By the product rule for the Higgs field
with respect to tensor products (see \S1 in \cite{Sim}), one has for
$i\in I^c$,
\begin{eqnarray*}
  \bar \theta_i(dz_I\otimes e_I^{m_I}\otimes
e_{I^c}^{*m_{I^c}})&=& dz_I\otimes \bar \theta_i(e_I^{m_I})\otimes
e_{I^c}^{*m_{I^c}}+dz_I\otimes e_I^{m_I}\otimes \bar \theta_i
(e_{I^c}^{*m_{I^c}}) \\
   &=& dz_I\otimes e_I^{m_I}\otimes \bar \theta_i
(e_{I^c}^{*m_{I^c}})\\
    &=& \frac{m_i}{4y_i^2}\cdot dz_I\otimes e_I^{m_I}\otimes e_{I^c\backslash \{i\}}^{*m_{I^c\backslash \{i\}}}\otimes (e_i^{*m_i-1}\otimes
    e_i).
\end{eqnarray*}
Similarly one gets $ \bar \theta_i(dz_I\otimes e_I^{m_I}\otimes
e_{I^c}^{*m_{I^c}})=0$ for $i\in I$. Thus one has the formula
$$
\bar \theta(e)=\sum_{i\in I^c,m_i\neq
0}\frac{m_i}{4y^2_i}(dz_I\wedge d\bar{z}_i\otimes e_I^{m_I}\otimes
e_{I^c\backslash \{i\}}^{*m_{I^c\backslash \{i\}}}\otimes
e_i^{*m_i-1}\otimes e_i).$$ By the assumption of the case, the above
expression is non-zero. The claim is proved.
\end{proof}

In summary, the space $\mathfrak h_{(2)}(I,\emptyset;\V_m)$ is zero
unless $I$ is the whole set. This proves the $q=0$ case. Now we
assume $q>0$. There are two possibilities, namely the case $I\cap
J\neq \emptyset$ and the case $I\cap J=\emptyset$. Consider the
former case. Let $\Lambda$ be the adjoint of the Lefschetz operator
$L=\wedge \omega$ on the space of differential forms, where $\omega$
is the K\"{a}hler form of the metric $g$ on $X$. By the standard
$L^2$-harmonic theory, $\Lambda(\alpha)$ is again an $L^2$-harmonic
form. As $I\cap J\neq \emptyset$, $\Lambda (\alpha)=0$ if and only
if $\alpha=0$. Write $\Lambda(\alpha)=\sum_{I',J'}\beta_{I',\bar
J'}$. Then $|J'|=q-1$ and $I' \cup J'$ is not equal to
$\{1,...,n\}$. So one proves by induction that each term
$\beta_{I',\bar J'}$ of $\Lambda(\alpha)$ is zero and hence
$\alpha=0$. Consider the latter case. Let $\HH^{n}_{\bar J}$ be the
complex manifold whose underlying riemannian structure is the same
as that of $\HH^{n}$ but the complex structure differs from the
usual one by that at the $j$-th factor for $j\in J$, one takes the
complex conjugate complex structure of $\HH$ (see \S4 in \cite{MS}).
One puts $X_{\bar J}=\Gamma\backslash \HH^{n}_{\bar J}$. As observed
in \S4 \cite{MS}, such an operation identifies the space of
$L^2$-harmonic forms $\mathfrak{h}^n_{(2)}(X,\V_m)$ with
$\mathfrak{h}^n_{(2)}(X_{\bar J},\V_m)$, but maps the subspace of
type $\{I,J;\V_m\}$ on $X$ to the subspace of type $\{I\cup
J,\emptyset;\V_m\}$ on $X_{\bar J}$. This allows us to reduce the
proof to the case $q=0$ on $X_{\bar J}$. However the above
arguments, particularly the truth of Proposition \ref{Mok
vanishing}, holds for $X_{\bar J}$ as well. Therefore the second
case also follows.
\end{proof}

Now we can proceed to the proof of Theorem \ref{vanishing of
cohomologies of cohomology sheaf}.
\begin{proof}
Let $\alpha\in \mathfrak h^i_{(2),\bar
\partial}(X,\sC_I)$ be the $L^2$-harmonic representative of a non-zero
cohomology class of $H^i_{(2)}(X,\sC_I^0)$. By Lemma \ref{two spaces
of harmonic forms}, $\alpha\in \mathfrak h^{p,q}_{(2)}(X,\V_m)$
where we rewrite $p=|I|,q=i$. By Lemma \ref{fine decomposition of
harmonic forms}, one writes
$\alpha=\sum_{I,J,|I|=p,|J|=q}\alpha_{I,\bar J}$ into sum of
$L^2$-harmonic forms with $\alpha_{I,\bar J}$ type $\{I,J;\V_m\}$.
By Proposition \ref{pq component of harmonic forms}, $\alpha_{I,\bar
J}=0$ unless $J$ is the complement of $I$. This implies that if
$q\neq n-p$, namely $i\neq n-|I|$, $\alpha=0$ and hence the theorem.
\end{proof}
In the course of the above proof, we have introduced for a subset
$J\subset\{1,\cdots,n\}$ a complex manifold $X_{\bar J}$. It is
again a discrete quotient of $\HH^n$: Putting $\Gamma_{J}$ to be
$g_{J}\Gamma g_{J}^{-1}$ where $g_J=(g_1,\cdots,g_n)\in
GL_2(\R)^{\times n}$ with
$$
g_i=\left\{
  \begin{array}{ll}
    \left(
                           \begin{array}{cc}
                             1 & 0 \\
                             0 & -1 \\
                           \end{array}
                         \right), & i\in J; \\
     id, & i\notin J.
  \end{array}
\right.,
$$
one sees easily that $X_{\bar J}=\Gamma_J\backslash \HH^n$. By
Margulis's arithmeticity theorem (see \cite{Ma}), $\Gamma_J\subset
G(\R)$ is again an arithmetic lattice. One defines the locally
homogenous line bundle $\sL_{i}$ over $X_{\bar J}$ as well as its
good extension to a smooth toroidal compactification $\bar{X}_{\bar
J}$ of $X_{\bar J}$. As a byproduct of the previous proof, we obtain
the following result:
\begin{proposition}\label{byproduct}
$$
\dim H^{n-|I|}_{(2)}(X,\sC_{I}^0)=\dim H^0_{(2)}(X_{\bar I^c},
\bigotimes_{j=1}^{n}\sL_{j}^{m_j+2}|_{X_{\bar I^c}}).
$$
\end{proposition}
\begin{proof}
Using the trick of taking the conjugate complex structures on
$\HH^{n}$ at the factors $I^c$ (see the proof of Proposition \ref{pq
component of harmonic forms}), one obtains an identification of the
space of harmonic forms of type $\{I,I^c;\V_m\}$ on $X$ with that of
type $\{(1,\ldots,n),\emptyset; \V_m\}$ on $X_{\bar I^c}$.
\end{proof}
The space of $L^2$-sections admits a natural analytical description
using a smooth toroidal compactification.
\begin{proposition}\label{L2 condition is algebraic}
For each $J\subset \{1,\cdots,n\}$, one has a natural isomorphism
$$
H_{(2)}^0(X_{\bar J}, \bigotimes_{j=1}^{n}\sL_{j}^{m_j+2}|_{X_{\bar
J}})\cong H^0(\bar{X}_{\bar J}, \sO_{\bar{X}_{\bar J}}(-S_{\bar
J})\otimes \bigotimes_{j=1}^{n}\sL_{j}^{m_j+2}),
$$
where $S_{\bar J}=\bar{X}_{\bar J}-X_{\bar J}$ is the boundary
divisor.
\end{proposition}
\begin{proof}
Obviously it suffices to show the statement for $J=\emptyset$, i.e.,
for $X_{\bar J}=X$. Let $\iota: X\to \bar X$ be the inclusion. One
has the relation
$$
H_{(2)}^0(X, \sC^0_{\{1,\ldots,n\}})\subset
A^{0,0}_{(2)}(X,\sC^0_{\{1,\ldots,n\}}) \subset A^{0}(\bar X,
\iota_{*}\sC^0_{\{1,\ldots,n\}}).
$$
We define $\Omega^{0}_{(2)}(\sC^0_{\{1,\ldots,n\}})$ to be the
subsheaf of $\iota_{*}\sC^0_{\{1,\ldots,n\}}$ consisting of germs of
$L^{2}$-holomorphic sections. It is clear that one has a natural
isomorphism
$$
H_{(2)}^0(X, \sC^0_{\{1,\ldots,n\}})\cong H^0(\bar
X,\Omega^{0}_{(2)}(\sC^0_{\{1,\ldots,n\}})).
$$
Recall that
$\sC^0_{\{1,\ldots,n\}}=\bigotimes_{j=1}^{n}\sL_{j}^{m_j+2}|_X$. In
the following we show that
$$
\Omega^{0}_{(2)}(\bigotimes_{j=1}^{n}\sL_{j}^{m_j+2}|_X)=\sO_{\bar
X}(-S)\otimes \bigotimes_{j=1}^{n}\sL_{j}^{m_j+2}.
$$
The question is local at infinity, and we shall only show the case
where $m=(1,0,\ldots,0)$, since the proof in general case is
completely analogous. It suffices also to consider a small
neighborhood $U$ of a maximal singular point $P\in S$. Let
$\{u_1,\ldots,u_n\}$ be a set of local coordinates of $U$ such that
$S\cap U$ is defined by $\prod_{i=1}^{n}u_i=0$. We also put
$\omega_{pc}=\sum_{i=1}^{n}\frac{\sqrt{-1}}{\pi}\frac{du_{i} \wedge
d\bar{u}_i}{|u_{i}|^{2}|\log|u_{i}||^{2}}$, the Poincar\'{e} metric
on $U-S\cong (\triangle^*)^{n}$, and
$\omega=c_1\sum_{i=1}^{n}\frac{dz_{i} \wedge d\bar{z}_i}{y^{2}_{i}}$
(in the following the $c_i$ are certain constants), the invariant
metric. Their volume forms are computed respectively by
$$
{\rm Vol}_{\omega_{pc}} = \frac{c_2\bigwedge_{i=1}^{n}(du_{i}\wedge
d\bar{u}_i)}{\prod_{i=1}^{n}|u_{i}|^{2}|\log|u_{i}||^{2}},\quad {\rm
Vol}_{\omega}= \frac{c_3\bigwedge_{i=1}^{n}(dz_{i}\wedge
d\bar{z}_i)}{\prod_{i=1}^{n}y_i^{2}}.
$$
By the theory of toroidal resolutions of a cusp singularity (see
\cite{AMRT} and \cite{El}), one has the following formula for the
change of local coordinates:
$$
2\pi\sqrt{-1}\cdot z_{i}= \sum_{j=1}^{n} a_{i,j} \log u_{i},
$$
where $a_{i,j}>0$ for all $i,j$. Comparing the real parts of the
above equality, one obtains
$$
y_{i}= \sum_{j=1}^{n} -a'_{i,j}\log |u_{i}|
$$ with $a'_{i,j} =\frac{a_{i,j}}{2\pi}$. The estimate of Hodge norms in
Theorem 5.21, \cite{CKS} is taken over the following type of region
$$
D_{\epsilon}=\{(u_{1},\ldots, u_{n}) \in (\triangle^{*})^{n} |\
\frac{\log|u_{1}|}{\log|u_{2}|} > \epsilon,\ldots,
\frac{\log|u_{n-1}|}{\log|u_{n}|}>\epsilon, -\log|u_{n}|>\epsilon \}
$$
for some $\epsilon>0$. For an element $\sigma\in S_n$, the
permutation group over $n$ elements, we put $D_{\sigma}$ to be the
region obtained by permutating the indices of $\{u_i\}$ in the
definition of $D_{\epsilon}$. So $D_{id}=D_{\epsilon}$, and note
that $\{D_{\sigma}\}_{\sigma\in S_n}$ is an open covering of a small
neighborhood of $P$ for suitable chosen $\epsilon$. By shrinking $U$
if necessary, it covers $U$. It is clear that the
square-integrability over $U$ is equivalent to that over each
$D_{\sigma}$. Now let $v_i$ be a local trivializing section of
$\sL_i$ over $U$, and write $f(u_1,\ldots,u_n)v_1^{3}\otimes
v_{2}^{2}\otimes \cdots \otimes v_{n}^{2}$ for an element in
$\iota_{*}(\sL_{1}^{3}\otimes \sL^{2}_{2}\otimes \cdots \otimes
\sL^{2}_{n})|_X(U)$. By Theorem 5.21, \cite{CKS}, $||v_{i}||^{2}
\sim |\log|u_{1}||$ over $D_{id}$. The condition that
$fv_1^{3}\otimes v_{2}^{2}\otimes \cdots \otimes v_{n}^{2}$ being
$L^2$ over $D_{id}$ means that
$$
\int_{D_{id}}|f|\cdot||v_1^{3}\otimes v_{2}^{2}\otimes \cdots
\otimes v_{n}^{2}||{\rm Vol}_{\omega}<\infty.
$$
Since over $D_{id}$,
$$
\frac{{\rm Vol}_{\omega}}{{\rm Vol}_{\omega_{pc}}} \sim
\frac{\prod_{i=1}^{n}|\log |u_{i}||^{2}}{\prod_{i=1}^{n}y_{i}^{2}}
=\frac{\prod_{i=1}^{n}|\log
|u_{i}||^{2}}{\prod_{i=1}^{n}(\sum_{j=1}^{n} -a'_{i,j}\log
|u_{i}|)^{2}} \sim  \frac{\prod_{i=1}^{n}|\log |u_{i}||^{2}}{|\log
|u_{1}||^{2n}}
$$
holds, it follows that
\begin{eqnarray*}
   \int_{D_{id}}|f|\cdot||v_1^{3}\otimes v_{2}^{2}\otimes \cdots
\otimes v_{n}^{2}||{\rm Vol}_{\omega} &\sim&  \int_{D_{id}}
|f|\cdot|\log|u_1||^{2n+1}
{\rm Vol}_{\omega}\\
    &\sim&\int_{D_{id}}
|f|\cdot|\log|u_1||^{3}\cdot\prod_{i=1}^{n}|\log|u_{i}||^{2}{\rm
Vol}_{\omega_{pc}} \\
&\sim& \int_{D_{id}}
\frac{|f|\cdot|\log|u_1||^{3}}{\prod_{i=1}^{n}|u_{i}|^{2}}\bigwedge_{i=1}^{n}(du_i\wedge
d\bar u_i).
\end{eqnarray*}
It is clear now that over $D_{id}$, the above section is $L^2$ if
and only if $f=u_1\cdot f'$ for certain holomorphic $f'$. Running
the above arguments for the other regions $D_{\sigma}$, one knows
that the section is $L^2$ over all $D_{\sigma}$ if and only if
$f=(u_1,\ldots,u_n)\cdot f''$ for certain holomorphic $f''$. This
shows the equality
$$
\Omega^{0}_{(2)}(\bigotimes_{j=1}^{n}\sL_{j}^{m_j+2}|_X)=\sO_{\bar
X}(-S)\otimes \bigotimes_{j=1}^{n}\sL_{j}^{m_j+2}
$$
over $U$. By the previous discussion, the above equality actually
holds over $\bar X$, which shows the proposition.
\end{proof}

\begin{corollary}\label{vanishing of intersection cohomology}
For $k\neq n$, $H_{(2)}^k(X^*, \V_m)=0$.
\end{corollary}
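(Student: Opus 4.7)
The strategy is to reduce intersection cohomology to a space of $L^2$-harmonic forms and then invoke the type vanishing already established earlier in this section. By Zucker's theorem (Theorem \ref{Zucker conjecture}, verified for Hilbert modular varieties in \cite{Zuc2}) together with the standard $L^2$-Hodge decomposition for the K\"ahler metric on $X$ with coefficients in a polarized VHS, one has
$$
IH^k(X^*, \V_m) \;\simeq\; H^k_{(2)}(X, \V_m) \;\simeq\; \mathfrak{h}^k_{(2)}(X, \V_m) \;=\; \bigoplus_{p+q=k} \mathfrak{h}^{p,q}_{(2)}(X, \V_m).
$$
This reduces the problem to proving that each $(p,q)$-component on the right vanishes as soon as $p+q\neq n$.

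I would then apply Lemma \ref{fine decomposition of harmonic forms} to refine each $(p,q)$-piece according to the multi-indices carrying the holomorphic and anti-holomorphic directions after pullback to $\HH^n$:
$$
\mathfrak{h}^{p,q}_{(2)}(X, \V_m) \;=\; \bigoplus_{|I|=p,\,|J|=q} \mathfrak{h}_{(2)}(I, J; \V_m).
$$
By Proposition \ref{pq component of harmonic forms}, the summand $\mathfrak{h}_{(2)}(I, J; \V_m)$ vanishes unless $I\cup J$ is a permutation of $\{1,\ldots,n\}$, i.e.\ unless $I$ and $J$ are disjoint subsets with $I\sqcup J=\{1,\ldots,n\}$. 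In particular, a non-zero summand forces $p+q=|I|+|J|=n$.

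Combining the two decompositions yields $\mathfrak{h}^k_{(2)}(X, \V_m) = 0$ for every $k\neq n$, and hence $IH^k(X^*, \V_m)=0$ for such $k$. There is essentially no new obstacle to confront here: all the hard analysis, namely Mok's vanishing (Proposition \ref{Mok vanishing}) and the trick of replacing the complex structure by its conjugate on selected factors of $\HH^n$ to reduce the general $(I,J)$-case to the case $J=\emptyset$, has been absorbed into Proposition \ref{pq component of harmonic forms}. The corollary is therefore a direct bookkeeping consequence of Zucker's $L^2$-isomorphism, the $(p,q)$-decomposition of harmonic forms, and the type vanishing.
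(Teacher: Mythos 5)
Your proof is correct, but it takes a more direct route than the paper. The paper does not decompose $IH^k$ into harmonic $(p,q)$-forms at the level of the corollary itself; instead it invokes Zucker's dimension formula (5.22) of \cite{Zuc1}, which expresses $\dim IH^k(X^*,\V_m)$ as a sum of $L^2$-Dolbeault cohomologies $\dim H^{k-l}_{(2)}(X,\sC^0_I)$ of the cohomology sheaves of the Higgs complex, then applies Theorem \ref{vanishing of cohomologies of cohomology sheaf} (which kills each summand for $0\leq k\leq n-1$ since $k-l<n-|I|$) and finishes the range $n+1\leq k\leq 2n$ by Poincar\'e duality for intersection cohomology. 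You instead pass through $IH^k\simeq \mathfrak h^k_{(2)}=\bigoplus_{p+q=k}\mathfrak h^{p,q}_{(2)}$, refine by Lemma \ref{fine decomposition of harmonic forms}, and apply Proposition \ref{pq component of harmonic forms} directly; this treats $k<n$ and $k>n$ uniformly and avoids both (5.22) and duality. Both arguments rest on exactly the same hard input, namely Proposition \ref{pq component of harmonic forms}; the paper's detour through Theorem \ref{vanishing of cohomologies of cohomology sheaf} is not wasted, since that theorem is needed anyway for the Hodge-number computations in Corollary \ref{hodge decomp of inter cohomlogy}. One small point to make explicit in your version: you use the bigraded decomposition $\mathfrak h^k_{(2)}(X,\V_m)=\bigoplus_{p+q=k}\mathfrak h^{p,q}_{(2)}(X,\V_m)$ for \emph{all} $k$, whereas the paper quotes it from Zucker only for $k=n$; this is indeed valid for a locally homogeneous $\R$-PVHS over a complete K\"ahler manifold (the K\"ahler identities give $\square_D=2\square_{D''}$ and $\square_{D''}$ preserves bidegree, and finite-dimensionality of $H^k_{(2)}$, guaranteed by Theorem \ref{Zucker conjecture}, identifies $L^2$-cohomology with harmonic forms), and Lemma \ref{fine decomposition of harmonic forms} is already stated for every $(p,q)$, so this is a matter of citation rather than a gap.
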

\begin{proof}
By Zucker (5.14),(5.22) in \cite{Zuc1} and
the finite dimensionality
of $H_{(2)}^{*}(X, \sC^0_{I})$, one has the equality
$$
\dim IH^k(X^*, \V_m)=
\dim
H_{(2)}^k(X^*,\V_m)=\sum_{P=0}^{k+|m|}\sum^{n}_{l=0}\sum_{\stackrel{I\subset
\{1,\ldots,n\},}{|I|=l,\ |m_I|+|I|=P}}\dim H_{(2)}^{k-l}(X,
\sC^0_{I}).
$$
In the above formula, it is clear that for $0\leq k\leq n-1$ fixed
and for all $l$, $k-l<n-l=n-|I|$ holds. By Theorem \ref{vanishing of
cohomologies of cohomology sheaf}, it follows that for $0\leq k\leq
n-1$, each direct summand in the right hand side of the formula is
zero, and hence $IH^k(X^*, \V_m)=0$. By Poincar\'{e} duality for
intersection cohomology, $IH^k(X^*, \V_m)$ vanishes for $n+1\leq
k\leq 2n$ as well. Hence the result holds for the $L^2$-cohomology groups.
\end{proof}

\begin{corollary}\label{hodge decomp of inter cohomlogy}
The Hodge decomposition of $H_{(2)}^n(X^*, \V_m)$ reads
$$
H_{(2)}^n(X^*, \V_m)=\bigoplus_{P+Q=n+|m|} H_{(2)}^{P,Q},
$$
where
$$
H_{(2)}^{P,Q}\cong \bigoplus^{n}_{l=0}\bigoplus_{\stackrel{I\subset
\{1,\ldots,n\},}{|I|=l,\ |m_I|+|I|=P}}H_{(2)}^{n-l}(X, \sC^0_{I}).
$$
In particular,
$$H_{(2)}^{n+|m|,0}\cong H^0(\bar X, \sO_{\bar
X}(-S)\otimes \bigotimes_{j=1}^{n}\sL_{j}^{m_j+2}),\quad
H_{(2)}^{0,n+|m|}\cong H^n(\bar X,\bigotimes_{i=1}^{n}\sL_i^{-m_i}).
$$
\end{corollary}
\begin{proof}
Continuing the arguments in the above proof, one sees that for each
$P$,
$$
H_{(2)}^{P,Q}\cong \bigoplus^{n}_{l=0}\bigoplus_{\stackrel{I\subset
\{1,\ldots,n\},}{ |I|=l,\ |m_I|+|I|=P}}H^{n-l}_{(2)}(X, \sC^0_{I}).
$$
It is clear that, for $P=0$ (resp. $P=n+|m|$), the above expression
consists of the unique term $H^n_{(2)}(X, \sC^0_{\emptyset})$ (resp.
$H_{(2)}^0(X, \sC^0_{\{1,\ldots,n\}})$). By Proposition \ref{L2
condition is algebraic}, $H_{(2)}^{n+|m|,0}\cong H^0(\bar X,
\sO_{\bar X}(-S)\otimes \bigotimes_{j=1}^{n}\sL_{j}^{m_j+2})$. By
Serre duality, one has a natural isomorphism $H_{(2)}^{0,n+|m|}\cong
H^n(\bar X,\bigotimes_{i=1}^{n}\sL_i^{-m_i})$.
\end{proof}

\section{Eisenstein cohomology of a Hilbert modular group}\label{Eisenstein cohomology}

Let $X^\sharp$ be the Borel-Serre compactification of $X$ with the
boundary $\partial X^\sharp$. Recall that $X$ is homotopy equivalent
to $X^\sharp$ and hence one has the natural restriction map $r:
H^*(X,\V_{\R})\to H^*(\partial X^\sharp,\V_{\R})$. The theory of
Eisenstein series (see \cite{Ha2} and \cite{Schw}) provides the
following space decomposition
$$
H^*(X,\V_{\R})=H_{!}^*(X,\V_{\R})\oplus H_{\Eis}^*(X,\V_{\R}),
$$
where $H_{!}^*(X,\V_{\R})$ is the image of the cohomology with
compact support, and $H_{\Eis}^*(X,\V_{\R})$ maps isomorphically to
the image of $r$. Its elements can be represented using Eisenstein
series. In this section, we study the Eisenstein cohomology
$H_{\Eis}^*(X,\V_{\R})$. Before doing anything, we first recall the
following result, which is a special case of the main theorems in
\cite{LS}:
\begin{theorem}[Li-Schwermer \cite{LS}] \label{Li-Schwermer}
If $\V_m$ is regular, then $H^i(X,\V_m)=0$ for $0\leq i\leq n-1$,
and $H^i(X,\V_m)=H_{\Eis}^i(X,\V_{m})\stackrel{r}{\cong}H^i(\partial
X^\sharp,\V_{m})$ for $i\geq n+1$.
\end{theorem}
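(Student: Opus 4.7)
The plan is to dispose of the two summands of $H^i(X,\V_m)=H^i_!(X,\V_m)\oplus H^i_{\Eis}(X,\V_m)$ independently, using the intersection cohomology results of Section \ref{intersection cohomology} for the first summand and the automorphic theory of Eisenstein series for the second. A Poincar\'e--Lefschetz duality then bridges the vanishing in low degrees and the boundary isomorphism in high degrees.

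First I would kill $H^i_!$ outside the middle degree. The natural map $H^i_c\to H^i$ factors through the middle-perversity intersection complex as $H^i_c\to IH^i(X^*,\V_m)\to H^i$, and Proposition \ref{injectivity} makes the second arrow injective, so $H^i_!\subseteq IH^i(X^*,\V_m)$. Corollary \ref{vanishing of intersection cohomology} then gives $H^i_!=0$ for every $i\neq n$. For $i\geq n+1$ this already yields $H^i(X,\V_m)=H^i_{\Eis}(X,\V_m)$ and identifies the canonical injection $r\colon H^i_{\Eis}\hookrightarrow H^i(\partial X^\sharp,\V_m)$ with the full restriction map.

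Next I would promote that injection to an isomorphism. From the long exact sequence of the pair $(X^\sharp,\partial X^\sharp)$, surjectivity of $r$ in degree $i$ is equivalent to the vanishing of the connecting map $H^i(\partial X^\sharp,\V_m)\to H^{i+1}_c(X,\V_m)$, which follows as soon as $H^{i+1}_c=0$. Poincar\'e--Lefschetz duality gives
\[
H^{i+1}_c(X,\V_m)\cong H^{2n-i-1}(X,\V_m^{\vee})^{*};
\]
for $i\geq n+1$ one has $2n-i-1\leq n-2$, and $\V_m^{\vee}$ is again regular, so the low-degree vanishing part of the theorem applied to $\V_m^{\vee}$ closes this step.

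The remaining task, and the main obstacle, is the vanishing $H^i_{\Eis}(X,\V_m)=0$ for $0\le i\le n-1$. Each component of $\partial X^\sharp$ is a nilmanifold for a cuspidal Borel subgroup, so $H^*(\partial X^\sharp_c,\V_m)\cong H^*(\Gamma_c,V_m)$ is computed by the Hochschild--Serre spectral sequence
\[
E_2^{p,q}=H^p\bigl(\Gamma_{c,T},H^q(\mathfrak n,V_m)\bigr)\Rightarrow H^{p+q}(\Gamma_c,V_m),\qquad \mathrm{rank}(\Gamma_{c,T})=n-1,
\]
with Kostant's theorem identifying $H^q(\mathfrak n,V_m)$ as a sum of characters $\C_{w\cdot\lambda}$ indexed by length-$q$ Weyl elements. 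A Dirichlet-unit calculation shows that the only $w\cdot\lambda$ restricting to the trivial character of $\Gamma_{c,T}$ sit at $w=1,w_0$ and occur only when $m_1=\cdots=m_n$; so the boundary cohomology is generally non-zero for $i<n$, and the required vanishing must come from the automorphic cut-off. Following Harder--Schwermer, in the regular case the Eisenstein series attached to the trivial Weyl translate $w=1$ have poles outside the critical strip and therefore do not contribute to cohomology; this removes the low-degree boundary classes from $H^*_{\Eis}$ and is the genuine analytic input of the proof.
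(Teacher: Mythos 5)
The paper offers no proof of this statement at all --- it is imported verbatim from \cite{LS} --- so your attempt can only be judged on its own terms and against the machinery the paper builds elsewhere. Most of your architecture is sound and in fact coincides with what the paper does later: the containment $H^i_!(X,\V_m)\subset IH^i(X^*,\V_m)$ together with Corollary \ref{vanishing of intersection cohomology} is precisely the mechanism of Theorem \ref{improvement of Li-Schwemer}; your Poincar\'e--Lefschetz reduction of the surjectivity of $r$ in degrees $\geq n+1$ to low-degree vanishing for $\V_m^{\vee}\simeq\V_m$ is correct; and your Kostant/Dirichlet-unit analysis of the boundary agrees with Lemma \ref{lemma of Zucker} and Claim \ref{basis of boundary} (only $w=1$ and $w=w_0$ survive, and only when $m_1=\cdots=m_n$, so for regular $m$ with the $m_i$ not all equal the whole theorem is immediate from $H^*(\partial X^\sharp,\V_m)=0$).

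The genuine gap is the step you yourself flag as the analytic heart: the vanishing of $H^i_{\Eis}(X,\V_m)$ for $i\leq n-1$ when $m_1=\cdots=m_n\geq 1$. The justification offered --- that the Eisenstein series attached to $w=1$ ``have poles outside the critical strip and therefore do not contribute'' --- is both imprecise and logically insufficient. In the regular case the relevant Eisenstein series are evaluated at points of \emph{absolute convergence} (this is exactly Step 2 of the proof of Theorem \ref{type of Eisenstein coho}, via Lemma 5.7, Ch.\ I of \cite{Fr}), so poles are not the operative phenomenon; and even if a particular summation procedure fails to extend a type-$(w=1)$ boundary class, that does not show that no class in $H^i(X,\V_m)$ restricts to it. Closing this step requires either Harder's theorem that ${\rm Im}(r)$ is exhausted by restrictions of special values and residues of Eisenstein series attached to boundary classes (the actual content of \cite{Ha2}, \cite{Schw}, \cite{LS}), or, more economically with the tools of this paper: first establish surjectivity of $r$ in all degrees $k\geq n$ directly from the absolutely convergent series of Theorem \ref{type of Eisenstein coho}, and then invoke the Poincar\'e--Lefschetz self-annihilation of ${\rm Im}(r)$, namely that ${\rm Im}(r^k)$ and ${\rm Im}(r^{2n-1-k})$ are exact annihilators under the perfect pairing on $H^*(\partial X^\sharp)$. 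Since $\dim H^k(\partial X^\sharp,\V_m)=\binom{n-1}{k}h+\binom{n-1}{k-n}h$ equals $\dim H^{2n-1-k}(\partial X^\sharp,\V_m)$ and $r^{2n-1-k}$ is onto for $k\leq n-1$, this forces ${\rm Im}(r^k)=0$ there. Note that this runs your duality argument in the opposite direction: high-degree surjectivity is the easy, convergent-series input, and low-degree vanishing is its dual consequence, not the other way around.
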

The following lemma is known by (6.13-18) in \cite{Zuc2}:
\begin{lemma}\label{lemma of Zucker}
$H^i(\partial X^\sharp,\V_{m})=0$ unless $m_1=\cdots=m_n$. As a
consequence, $H_{\Eis}^i(X,\V_{m})=0$ if the relation
$m_1=\cdots=m_n$ is not satisfied.
\end{lemma}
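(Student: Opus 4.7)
The plan is to compute $H^*(\partial X^\sharp, \V_m)$ one boundary component at a time and reduce the vanishing to a statement about characters of the unit group. Each of the $h$ connected components $\partial_c X^\sharp$ corresponds to a cusp stabilized by a rational parabolic $P = LN \subset G = R_{F|\Q}SL_2$, and the paper has already identified its topology as a $T^n$-fibration over $T^{n-1}$,
\[
T^n \;\longrightarrow\; \partial_c X^\sharp \;\longrightarrow\; T^{n-1},
\]
with fiber $\Gamma_N \backslash N(\R)$, $\Gamma_N = \Gamma \cap N$, and base obtained from the arithmetic Levi $\Gamma_L$, which sits inside $\sO_F^*/\{\pm 1\}$ and by Dirichlet's unit theorem has free rank $n-1$.

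I would next run the Hochschild--Serre spectral sequence of the extension $1 \to \Gamma_N \to \Gamma \cap P \to \Gamma_L \to 1$:
\[
E_2^{p,q} = H^p\bigl(\Gamma_L,\, H^q(\Gamma_N, V_m)\bigr) \;\Longrightarrow\; H^{p+q}(\partial_c X^\sharp, \V_m).
\]
Since $N$ is abelian of real dimension $n$ and $\Gamma_N$ is a cocompact lattice, van Est identifies the fiber cohomology with Lie algebra cohomology $H^q(\mathfrak{n}, V_m)$. Kostant's theorem for $\mathfrak{g} = \mathfrak{sl}_2^{\oplus n}$, with Weyl group $W = \{\pm 1\}^n$ and $\rho = (1,\ldots,1)$, then yields
\[
H^q(\mathfrak{n}, V_m) \;=\; \bigoplus_{w\in W,\ \ell(w)=q} \C_{w\cdot m},
\]
a sum of one-dimensional $L$-weight spaces with weight $(w\cdot m)_i = m_i$ when the $i$-th sign of $w$ is $+$ and $(w\cdot m)_i = -m_i - 2$ when it is $-$.

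The key step is the vanishing of $H^p(\Gamma_L, \C_{w\cdot m})$ unless the character $\chi_{w\cdot m}\colon u \mapsto \prod_i (u^{(i)})^{(w\cdot m)_i}$ is trivial on the rank $n-1$ free part of $\Gamma_L$. Taking absolute values and logarithms, this triviality amounts to $\sum_i (w\cdot m)_i \log|u^{(i)}| = 0$ for every unit $u \in \Gamma_L$. By Dirichlet, the image of the log map is a lattice of rank $n-1$ spanning the hyperplane $\sum x_i = 0$, so the exponent vector $(w\cdot m)_i$ must lie in the orthogonal line $\R\cdot(1,\ldots,1)$, i.e.\ have all components equal. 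Since $m_i \geq 0$ forces $m_i \neq -m_j - 2$ for any $i,j$, the only possibilities are $w = (+,\ldots,+)$ with $m_1 = \cdots = m_n$ or $w = (-,\ldots,-)$ with $m_1 = \cdots = m_n$. Hence for non-constant $m$ every $E_2^{p,q}$ vanishes, the abutment is zero, and the first assertion follows.

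The consequence for $H^*_{\Eis}(X, \V_m)$ is immediate from its definition recalled at the start of this section: $H^*_{\Eis}(X, \V_\R)$ maps isomorphically onto the image of $r\colon H^*(X, \V_\R) \to H^*(\partial X^\sharp, \V_\R)$, and vanishing of the target kills $H^*_{\Eis}$. The main obstacle I anticipate is the careful bookkeeping of $\Gamma_L$ inside $\sO_F^*$ (passage to finite index, absorption of the $\{\pm 1\}$-torsion) and of the unit action on Kostant weight spaces; however no new phenomena appear, since over $\R$ the finite torsion of $\sO_F^*$ contributes nothing beyond invariants and finite-index sublattices of $\Z^{n-1}$ still span the hyperplane $\sum x_i = 0$, so the logarithm argument is unaffected.
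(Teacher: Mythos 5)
Your argument is correct and is essentially the computation the paper delegates to (6.13)--(6.18) of \cite{Zuc2}: the Hochschild--Serre (equivalently, Leray) spectral sequence of the torus fibration of each boundary component, van Est/Nomizu plus Kostant to reduce the fiber cohomology to one-dimensional weight spaces $\C_{w\cdot m}$, and the Dirichlet unit theorem to show the resulting characters of the rank-$(n-1)$ Levi lattice are nontrivial unless all $m_i$ are equal. The paper offers no proof beyond that citation, so your write-up simply supplies the details of the same route, and the deduction for $H^*_{\Eis}$ from the definition of Eisenstein cohomology is as in the paper.
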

The main result of this section is the following
\begin{theorem}\label{type of Eisenstein coho}
Assume $m_1=\cdots=m_n$ and $l\geq n$. Then the restriction map $r:
H^l(X,\V_{\R})\to H^l(\partial X^\sharp,\V_{\R})$ is surjective and
$\dim H_{\Eis}^l(X,\V_{m})=\binom{n-1}{l-n}h$. Moreover,
$$H_{\Eis}^l(X,\V_{m})\subset F^{|m|+n}H^l(X,\V_{m})$$ holds, where
$F^{.}$ is the Hodge filtration on $H^l(X,\V_{m})$.
\end{theorem}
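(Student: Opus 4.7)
The plan is to establish the three assertions in turn: the dimension of $H^l(\partial X^\sharp,\V_m)$, surjectivity of the restriction $r$ (which then pins down $\dim H_{\Eis}^l$), and the position of the Eisenstein classes in the Hodge filtration.

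\emph{Step 1 (boundary cohomology).} I would decompose $\partial X^\sharp = \bigsqcup_{c=1}^{h}\partial_c X^\sharp$ into cusp components, each a $T^n$-bundle over $T^{n-1}$ coming from the parabolic $P_c$, and use Hochschild--Serre for $\Gamma\cap N_c \triangleleft \Gamma\cap P_c$ to obtain the spectral sequence $E_2^{p,q}=H^p(\Z^{n-1},H^q(\mathfrak{n}_c,V_m))$. The fiber term $H^q(\mathfrak{n}_c,V_m)$ splits by K\"unneth over the real places into factors $\C\cdot e_1^{m_i}$ and $\C\cdot e_2^{m_i}$ in degrees $0$ and $1$, with weights $\pm m_i$ under the unit action; under $m_1=\cdots=m_n$, only the subsets $J=\emptyset$ and $J=\{1,\ldots,n\}$ give a character trivial on norm-one units, so only $q=0$ and $q=n$ contribute. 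For $l\geq n$ only $q=n$ is relevant, yielding $\binom{n-1}{l-n}$ per cusp and $\binom{n-1}{l-n}h$ in total; the spectral sequence degenerates at $E_2$ for degree reasons (only two non-zero columns).

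\emph{Step 2 (surjectivity of $r$).} Non-triviality of $\V_m$ together with $m_1=\cdots=m_n$ forces every $m_i\geq 1$, so $\V_m$ is regular; for $l\geq n+1$ Theorem \ref{Li-Schwermer} gives surjectivity (in fact the isomorphism $H^l(X,\V_m)\simeq H^l(\partial X^\sharp,\V_m)$). For the remaining case $l=n$, I would construct Eisenstein classes by hand: attached to each cusp $c$ is a holomorphic Hilbert--Eisenstein series $E_c$ of parallel weight $m_1+2\geq 3$, viewed as a section of $\bigotimes_i\sL_i^{m_1+2}$, whose constant-term matrix at the cusps is invertible by absolute convergence (as in \cite{Ha2,Schw,Fr}). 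Pairing $E_c$ with the highest weight vector $v_m=e_1^{\otimes m_1}\otimes\cdots\otimes e_1^{\otimes m_n}$ and multiplying by $dz_1\wedge\cdots\wedge dz_n$ gives a closed $\V_m$-valued $n$-form; after the normalization above the resulting $h$ classes surject onto the $h$-dimensional $H^n(\partial X^\sharp,\V_m)$.

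\emph{Step 3 (Hodge type, and main obstacle).} To place the Eisenstein classes in $F^{|m|+n}$, I would expand the explicit representative using $d\log y_i=\tfrac{1}{2iy_i}(dz_i-d\bar z_i)$. The degree-$l$ Eisenstein representative $E_c\cdot v_m\cdot dz_1\wedge\cdots\wedge dz_n\wedge d\log y_{i_1}\wedge\cdots\wedge d\log y_{i_{l-n}}$ splits into Hodge pieces of type $(|m|+n+s,\,l-n-s)$ with $0\leq s\leq l-n$; since $v_m\in E^{|m|,0}\subset\V_m$ and the $n$ holomorphic differentials contribute $(n,0)$, every component satisfies $P\geq|m|+n$, so the class lies in $F^{|m|+n}$. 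The main obstacle of the whole proof is the $l=n$ case of Step 2, which is not covered by Theorem \ref{Li-Schwermer} and rests on the invertibility of the constant-term matrix of the holomorphic Hilbert--Eisenstein series, the essential analytic input taken from \cite{Ha2,Schw,Fr}; once that is in place, Step 3 is a formal check.
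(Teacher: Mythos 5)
Your Steps 1 and 2 are essentially sound and track the paper's route: the boundary computation via Hochschild--Serre and the unit-character argument reproduces what the paper extracts from Zucker's Lie algebra cohomology computations, and invoking Theorem \ref{Li-Schwermer} for surjectivity in degrees $l\geq n+1$ is a legitimate shortcut (the paper instead constructs explicit Eisenstein lifts in every degree $l\geq n$). The genuine gap is in Step 3, which you call ``a formal check'' but which is where most of the paper's work actually lies. First, your coefficient vector $v_m=e_1^{\otimes m_1}\otimes\cdots\otimes e_1^{\otimes m_n}$ is the flat highest weight vector; it is \emph{not} a section of $F^{|m|}\V_m$ (at the point $z$ the line $F^1$ of the $i$-th factor is spanned by $e_{i1}+z_ie_{i2}$, not by $e_{i1}$), so the assertion ``$v_m\in E^{|m|,0}$'' fails, and moreover $E_c\,v_m\,dz_1\wedge\cdots\wedge dz_n$ does not even descend to $X$, since the flat vector does not transform by the scalar automorphy factor that the parallel weight of $E_c$ is meant to cancel. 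One must take $\bigotimes_i(e_{i1}+z_ie_{i2})^{m_i}$ as in Claim \ref{basis of X infty}; only then is the coefficient a holomorphic generator of $F^{|m|}$ and the count $P\geq|m|+n$ meaningful.

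Second, and more seriously, for $l>n$ the expression $E_c\,v_m\,dz_1\wedge\cdots\wedge dz_n\wedge d\log y_{i_1}\wedge\cdots\wedge d\log y_{i_{l-n}}$ is only $\Gamma_\infty$-invariant, because $d\log y_i$ is not $\Gamma$-invariant; it is therefore not a differential form on $X$ and cannot represent a class in $H^l(X,\V_m)$. The genuine representative is the symmetrization $E(\omega_a)=\sum_{M\in\Gamma/\Gamma_\infty}\omega_a|M=E_{\alpha,\beta}(z)\,\omega_a$, where $E_{\alpha,\beta}$ is a \emph{non-holomorphic} Eisenstein series with mixed exponents ($\alpha_i=m_1+1$, $\beta_i=1$ for $i\in a$). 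To finish one must (a) prove absolute convergence of $E_{\alpha,\beta}$ (Lemma 5.7, Ch.~I of \cite{Fr}; this is where non-triviality of $\V_m$ enters and Hecke summation is avoided), (b) show $E(\omega_a)$ is closed, restricts to $\omega_a$ at $\infty$ and to $0$ at the other cusps by computing the constant Fourier coefficient, and (c) read off from the Fourier expansion that $E(\omega_a)$ lies in $A^{n,0}_{\bar X}(\log S)\wedge\overline{A^{|a|,0}_{\bar X}(*S)}\otimes E^{|m|,0}_m$, i.e.\ has at most logarithmic singularities along $S$, so that it defines a class in $F^{|m|+n}$ of the logarithmic de~Rham complex. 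None of this is formal, and it is not contained in the constant-term statement you isolate for the holomorphic ($l=n$) series; identifying the $l=n$ surjectivity as the sole obstacle misplaces the difficulty.
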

After the paper was posted, Wildeshaus informed us that the main
result in \cite {BW} also shows the statement about the Hodge type
of the Eisenstein cohomology in the above theorem. The current
argument in the proof is based on the treatment of the Eisenstein
cohomology for constant coefficients in \cite{Fr} (see \S3 and \S4,
Ch. III in loc. cit.). In the following we assume $m_1=\cdots=m_n$.
It is clear that the proof of the theorem can be reduced to the
statement for the standard cusp $\infty$, which is the
$\Gamma$-equivalence class of $(\infty,\ldots,\infty)$. From now on
we pretend that $\partial X^\sharp$ has only one component. Let
$\Gamma_{\infty}\subset \Gamma$ be the stabilizer of $\infty$ and
put $X_{\infty}=\Gamma_{\infty}\backslash \HH^{n}$.
\begin{proof} We divide the whole proof into several steps.

{\bf Step 1}: A basis of $H^l(X_\infty,\V_m)$ for $n\leq l\leq
2n-1$. Let $X_{\infty}(1)$ be the quotient of the following set
$$
\{(z_1,\ldots,z_n)\in \HH^{n}|\ \prod_{i=1}^{n}y_i=1\}
$$
by $\Gamma_{\infty}$, which is naturally identified with $\partial
X^{\sharp}$. The group $\Gamma_{\infty}$ is of the form
$$
1\to U\to \Gamma_{\infty}\to M\to 1,
$$
where
\begin{eqnarray*}
  U &=& \Biggl\{ \left(
             \begin{array}{cc}
               1 & u^{(1)} \\
               0 & 1 \\
             \end{array}
           \right)\times \cdots\times \left(
                                        \begin{array}{cc}
                                          1 & u^{(n)} \\
                                          0 & 1 \\
                                        \end{array}
                                      \right)\in \Gamma_{\infty}|\
                                      u\in \sO_F
\Biggr\} \\
  M &=& \Biggl\{ \left(
             \begin{array}{cc}
               t^{(1)} & 0 \\
               0 & (t^{-1})^{(1)} \\
             \end{array}
           \right)\times \cdots\times \left(
                                        \begin{array}{cc}
                                           t^{(1)} & 0 \\
                                          0 & (t^{-1})^{(n)} \\
                                        \end{array}
                                      \right)\in \Gamma_{\infty}|\
                                      t\in \sO_F^*
\Biggr\}
\end{eqnarray*}
are free abelian groups of rank $n$ and $n-1$ respectively. The
$X_{\infty}(1)$ has two distinguished submanifolds: One is the
quotient of the set $\{(iy_1,\ldots,iy_n)\in \HH^{n}|\
\prod_{i=1}^{n}y_i=1\}$ by $M$, which is isomorphic to $(S^1)^{n-1}$
with 'coordinates' $\{\log y_1,\ldots,\log y_{n-1}\}$ and denoted
temporarily by $Y$), and the quotient of $\{(x_1+i,\ldots,x_n+i)\in
\HH^{n}|\ x_1,\ldots,x_n\in \R\}$ by $U$, which is isomorphic to
$(S^1)^{n}$ with 'coordinates' $\{x_1,\ldots,x_n\}$ and denoted
temporarily by $Z$. In fact, $X_{\infty}(1)$ is naturally a fiber
bundle with $Y$ (resp. $Z$) a section (resp. fiber) of it (see \S2
in \cite{Ha2}).
\begin{claim}\label{basis of boundary}
For $n\leq l\leq 2n-1$, the following set of vector valued
differential forms over $\HH^{n}$ is $\Gamma_{\infty}$-invariant and
defines a basis of $H^l(X_{\infty}(1),\V_m)$:
$$
\{\omega_a'=\frac{dy_{a}}{y_{a}}\wedge dx_1 \wedge \cdots \wedge
dx_n \otimes \bigotimes_{i=1}^{n}(e_{i1}+x_{i} e_{i2})^{m_1}|\
a\subset \{1,\ldots,n-1\},\ |a|=l-n\},
$$
where for $a=(i_1,\ldots,i_{l-n}),\ i_1<\cdots<i_{l-n}$,
$\frac{dy_{a}}{y_{a}} = \frac{dy_{i_{1}}}{y_{i_{1}}} \wedge \cdots
\wedge \frac{dy_{i_{i-n}}}{y_{i_{l-n}}}$, and $\Biggl\{
e_{i1}=\left(\begin{array}{c}
  1 \\
 0 \\
  \end{array}
  \right),\
e_{i2}=\left(
                                                        \begin{array}{c}
                                                         0 \\
                                                         1 \\
                                                        \end{array}
                                                      \right)\Biggr\}$
is the standard basis of $V_{i}$ at $z_0\in \HH^{n}$ (see
\S\ref{preliminaries}).
\end{claim}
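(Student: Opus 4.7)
The plan is to establish three things for the forms $\omega_a'$: closedness (so they define cohomology classes), $\Gamma_\infty$-equivariance (so they descend to $X_\infty(1)$), and that they span $H^l(X_\infty(1),\V_m)$.

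For closedness on $\HH^n$: the scalar factor $\frac{dy_a}{y_a}\wedge dx_1\wedge\cdots\wedge dx_n$ is closed, and the covariant derivative of the vector factor, namely $\nabla\bigl[\bigotimes_i(e_{i1}+x_ie_{i2})^{m_1}\bigr]=\sum_i m_1(e_{i1}+x_ie_{i2})^{m_1-1}\otimes e_{i2}\otimes\bigotimes_{j\neq i}(e_{j1}+x_je_{j2})^{m_1}\,dx_i$, wedges to zero against the top form $dx_1\wedge\cdots\wedge dx_n$. Hence $d\omega_a'=0$. For $\Gamma_\infty$-equivariance, the key observation is that $(e_{i1}+x_ie_{i2})^{m_1}$ is the $U_i$-equivariant flat lift of the highest-weight vector $e_{i1}^{m_1}\in\mathrm{Sym}^{m_1}V_i$. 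Under $u\in U$ the shift $x_i\mapsto x_i+u^{(i)}$ in the vector factor is cancelled exactly by the action of $\rho_m(u)$ in the appropriate convention (a short direct calculation), while the scalar factor is plainly invariant. Under $t\in M$ with $z_i\mapsto(t^{(i)})^2z_i$, the scalar piece picks up $\prod_i(t^{(i)})^2=N(t)^2=1$, and using $m_1=\cdots=m_n$ the vector piece transforms by $\rho_m(t)$ up to a factor $N(t)^{m_1}$, which is $1$ on the finite-index subgroup of totally positive units (which we may assume $\Gamma_\infty$ lies in after shrinking).

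For the dimension count, I would use the Hochschild--Serre spectral sequence for $1\to U\to\Gamma_\infty\to M\to 1$,
\begin{equation*}
E_2^{p,q}=H^p(M,H^q(U,V_m))\Longrightarrow H^{p+q}(\Gamma_\infty,V_m)=H^{p+q}(X_\infty(1),\V_m).
\end{equation*}
Since $U$ is a free abelian group of rank $n$ acting unipotently, $H^q(U,V_m)$ computes via the Koszul complex associated with the nilpotent operators $N_i=e_{i2}\partial_{e_{i1}}$ on $\mathrm{Sym}^{m_i}V_i$; each $N_i$ has one-dimensional kernel and cokernel, so $\dim H^q(U,V_m)=\binom{n}{q}$, with a natural basis indexed by subsets $S\subset\{1,\ldots,n\}$ of size $q$ and weights given by a Kostant-type formula. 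Under the equal-weight hypothesis $m_1=\cdots=m_n$, the $M$-character on the $S$-piece is trivial precisely when $S=\{1,\ldots,n\}$ (i.e.\ $q=n$), so the spectral sequence degenerates and $H^l(X_\infty(1),\V_m)\simeq H^{l-n}(M,H^n(U,V_m))$, which has dimension $\binom{n-1}{l-n}$. This matches the cardinality of the proposed set of forms.

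Finally, linear independence of the $[\omega_a']$ is transparent: restricting $\omega_a'$ along the fibration $X_\infty(1)\to Y$ and integrating over the fiber $Z\cong(S^1)^n$ leaves $\frac{dy_a}{y_a}$ on $Y\cong(S^1)^{n-1}$, and $\{\frac{dy_a}{y_a}:a\subset\{1,\ldots,n-1\},\,|a|=l-n\}$ is the standard basis of $H^{l-n}(Y,\C)$. Combined with the dimension count, this proves the $\omega_a'$ form a basis. The principal technical obstacle is the character computation of $M$ on the nilpotent cohomology $H^q(U,V_m)$; tracking the diagonal-torus weights on $\bigotimes_i\mathrm{Sym}^{m_i}V_i$ and identifying the unique $M$-trivial piece is exactly where the hypothesis $m_1=\cdots=m_n$ is essential, paralleling Lemma~\ref{lemma of Zucker}.
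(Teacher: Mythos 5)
Your argument is correct and rests on the same skeleton as the paper's: the paper runs the Leray spectral sequence of the fibration $\pi:X_\infty(1)\to Y$ with fibre $Z$, which for these aspherical spaces is exactly your Hochschild--Serre sequence for $1\to U\to\Gamma_\infty\to M\to 1$. The difference is one of execution: the paper cites Harder for the $E_2$-degeneration and van Est plus Zucker's Lie algebra cohomology computations ((6.13), (6.18) in \cite{Zuc2}) to get $H^l(X_\infty(1),\V_m)=H^{l-n}(Y,\C)\otimes H^n(Z,\V_m)$, whereas you compute $H^q(U,V_m)$ directly from the Koszul complex of the commuting nilpotents and then analyse the $M$-characters by hand; your version is more self-contained and also supplies the closedness and invariance checks that the paper leaves implicit. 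One small inaccuracy: the $M$-character on the $S$-piece of $H^q(U,V_m)$ is \emph{not} trivial only for $S=\{1,\ldots,n\}$ --- the piece $S=\emptyset$ carries the character $\prod_i(t^{(i)})^{m_i}$, which is also trivial (on the totally positive units) exactly when $m_1=\cdots=m_n$, consistent with Lemma \ref{lemma of Zucker}; any proper nonempty $S$ would force a relation of the form $m_i+m_j=-2$ and so is genuinely excluded. This slip is harmless for the claim at hand, since the $S=\emptyset$ contribution lives in $E_2^{l,0}=H^l(M,\C)$ and $M\cong\Z^{n-1}$ has cohomological dimension $n-1<l$; but you should state the character computation correctly, both because it is the crux of the degeneration argument and because the surviving $q=0$ column is precisely what produces the Eisenstein classes in degree $l<n$ elsewhere in the theory.
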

\begin{proof}
Let $\pi: X_{\infty}(1)\to Y$ be the fibration. By \S2 in
\cite{Ha2}, the Leray spectral sequence of $\pi$ for
$H^l(X_{\infty}(1),\V_m)$ degenerates at $E_2$. By the theorem of
van Est (see \S2 in \cite{Ha2}), each grading
$H^*(Y,R^{l-*}\pi_*\V_m)$ is computed by its corresponding Lie
algebra cohomology. By the computations on the Lie algebra
cohomology in \S6 of \cite{Zuc2} (see particularly (6.18) and Lemma
(6.13)), one knows that
$$
H^l(X_{\infty}(1),\V_m)=H^{l-n}(Y,\C)\otimes H^{n}(Z,\V_m).
$$
Now it is straightforward to check that $\{\frac{dy_a}{y_a}|\
a\subset \{1,\ldots,n-1\},\ |a|=l-n\}$ provides a basis for
$H^{l-n}(Y,\C)$ and the element $\bigwedge_{i=1}^{n}dx_i \otimes
\bigotimes_{i=1}^{n}(e_{i1}+x_{i} e_{i2})^{m_1}$ is a basis for the
one dimensional space $H^{n}(Z,\V_m)$.
\end{proof}
Note that the inclusion $X_{\infty}(1)\subset X_{\infty}$ is a
homotopy equivalence. We claim the following
\begin{claim}\label{basis of X infty}
The following set of $\Gamma_{\infty}$-invariant vector valued
differential forms over $\HH^{n}$
$$
\{\omega_a = \frac{dz_{a}\wedge d\bar{z}_a}{y_{a}}\wedge dz_{b}
\otimes \bigotimes_{i=1}^{n}(e_{i1}+z_{i}e_{i2})^{m_1}|\ a\subset
\{1,\ldots,n-1\},\ |a|=l-n,\ b=a^c\}$$ defines a basis of
$H^{l}(X_{\infty}, \V_m)$.
\end{claim}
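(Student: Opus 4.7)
The plan is to establish this as an analogue of Claim \ref{basis of boundary} with holomorphic-flavored forms, in three steps: (i) check $\Gamma_\infty$-invariance and closedness of each $\omega_a$, (ii) use the homotopy equivalence $\iota\colon X_\infty(1)\hookrightarrow X_\infty$ to reduce the basis claim to one on $X_\infty(1)$, and (iii) identify $\iota^*[\omega_a]$ with a nonzero scalar multiple of $[\omega_a']$ inside the K\"{u}nneth summand that carries $H^l(X_\infty(1),\V_m)$ in the relevant degrees.

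For invariance, under $u\in U$ the factors $dz_i,\,d\bar z_i,\,y_i$ are fixed and $(e_{i1}+z_ie_{i2})^{m_1}$ is $U$-equivariant tautologically for the local system $\V_{i,m_1}$, exactly as for $\omega_a'$ in Claim \ref{basis of boundary}. Under $t\in M$ we have $z_i\mapsto(t^{(i)})^2 z_i$, so $\frac{dz_i\wedge d\bar z_i}{y_i}$ is fixed while $\prod_{j\in b}dz_j$ acquires a scaling factor $\prod_{j\in b}(t^{(j)})^2$, which is precisely cancelled by the $\rho_m$-character on $\bigotimes_i(e_{i1}+z_ie_{i2})^{m_1}$ once one invokes $m_1=\cdots=m_n$ and the fact that $\Gamma\cap M$ acts through totally positive units, so $N_{F|\Q}(t)=1$. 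For closedness, $d\bigl(\frac{dz_i\wedge d\bar z_i}{y_i}\bigr)=0$ and $d(dz_j)=0$, hence $d\omega_a$ can only come from the flat connection acting on the vector-valued section, which produces a sum of terms each containing $dz_i\otimes e_{i2}$ for some $i\in\{1,\ldots,n\}=a\sqcup b$. Since $dz_i$ already occurs in $\frac{dz_a\wedge d\bar z_a}{y_a}\wedge dz_b$ for every such $i$, the wedge vanishes.

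For the basis property, $\iota^*\colon H^l(X_\infty,\V_m)\xrightarrow{\sim}H^l(X_\infty(1),\V_m)$ is an isomorphism, and by Claim \ref{basis of boundary} the target has dimension $\binom{n-1}{l-n}$, equal to the cardinality of $\{\omega_a\}$, so it suffices to prove linear independence of $\{\iota^*[\omega_a]\}$. Writing $\frac{dz_i\wedge d\bar z_i}{y_i}=-2i\,\frac{dx_i\wedge dy_i}{y_i}$ and $dz_j=dx_j+i\,dy_j$, I would expand $\iota^*\omega_a$ and isolate the top-$x$-degree component, which corresponds to the Leray summand $H^{l-n}(Y,R^n\pi_*\V_m)$ of the fibration $\pi\colon X_\infty(1)\to Y$ with fiber $Z\simeq(S^1)^n$. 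This component works out to $\pm(-2i)^{l-n}\,\frac{dy_a}{y_a}\wedge dx_1\wedge\cdots\wedge dx_n\otimes\bigotimes_i(e_{i1}+z_ie_{i2})^{m_1}$; projecting the vector-valued section onto $H^n(Z,\V_m)^M=\bigotimes_i\C\,e_{i2}^{m_1}$ extracts the coefficient $\prod_i z_i^{m_1}$, a nonvanishing section of $R^n\pi_*\V_m$ on $Y$ that matches the analogous projection of $\omega_a'$ up to a nonzero scalar. Since the index assignment $a\mapsto a$ is a bijection, the classes $\{\iota^*[\omega_a]\}$ are linearly independent and hence form a basis.

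The main obstacle is the bookkeeping in the last step: identifying the Leray decomposition used in Claim \ref{basis of boundary} and verifying that contributions from $H^p(Y,R^q\pi_*\V_m)$ with $q\neq n$ vanish in the range $n\leq l\leq 2n-1$. Following \cite{Zuc2} (see (6.13) and (6.18)), this reduces to checking that the $M$-character on $H^q(\mathfrak u,V_m)$ is nontrivial unless $q\in\{0,n\}$, so those summands drop upon taking $M$-invariants and only the $q=n$ contribution survives in the relevant degree range. With this in hand, the index-by-index matching of the top-$x$-degree projections of $\omega_a$ and $\omega_a'$ is a routine calculation.
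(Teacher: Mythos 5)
Your overall skeleton matches the paper's: reduce to $X_\infty(1)$ via the homotopy equivalence, invoke Claim \ref{basis of boundary}, and then compare $\omega_a$ with $\omega_a'$. The paper also passes through the intermediate form $\frac{dy_a}{y_a}\wedge dx_1\wedge\cdots\wedge dx_n\otimes\bigotimes_i(e_{i1}+z_ie_{i2})^{m_1}$ (citing Remark 3.1, Ch.~III of \cite{Fr} for the first replacement, which your top-$x$-degree extraction reproduces). The divergence, and the gap, is in the last step.

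The decisive point is to show that $\frac{dy_a}{y_a}\wedge dx_1\wedge\cdots\wedge dx_n\otimes\bigotimes_i(e_{i1}+z_ie_{i2})^{m_1}$ and $\omega_a'$ (which carries $\bigotimes_i(e_{i1}+x_ie_{i2})^{m_1}$) represent the same class up to a nonzero scalar. Your mechanism for this --- ``projecting onto $\bigotimes_i\C e_{i2}^{m_1}$ extracts the coefficient $\prod_iz_i^{m_1}$, which matches the analogous projection of $\omega_a'$'' --- does not work as stated: the two coefficients are $\prod_iz_i^{m_1}$ and $\prod_ix_i^{m_1}$, which are not equal, neither is constant along the fiber $Z=U\backslash\R^n$, and neither is even $U$-invariant, so this ``coefficient'' is not a well-defined section of $R^n\pi_*\V_m$ over $Y$. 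Moreover $H^n(Z,\V_m)$ is not the $e_{i2}$-isotypic subspace of $V_m$; it is $\Lambda^n\mathfrak u^*$ tensored with the $\mathfrak u$-coinvariants of $V_m$, and computing the image of the two coefficient tensors there is precisely the nontrivial content. The paper settles it by a direct computation: the difference of the two tensors is a linear combination of terms $\frac{dy_a}{y_a}\wedge dx_1\wedge\cdots\wedge dx_n\otimes\bigotimes_i(y_ie_{i2})^{t_i}\otimes(e_{i1}+x_ie_{i2})^{m_1-t_i}$ with some $t_i>0$, and each such term is exhibited explicitly as an exact form (the primitive drops one $dx_1$ and raises $y_1$). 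Without this (or an equivalent correct computation of the fiberwise class), your linear independence argument is not complete, since a priori the fiber-direction class of $\omega_a$ could even vanish.

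Two smaller points. First, in the invariance check, $\frac{dz_i\wedge d\bar z_i}{y_i}$ is \emph{not} fixed by $M$; it scales by $(t^{(i)})^2$, and invariance of $\omega_a$ only emerges after collecting all factors into $N_{F|\Q}(t)^{m_1+2}$. Second, your reduction of the Leray spectral sequence to the summand $H^{l-n}(Y,R^n\pi_*\V_m)$ is fine and is exactly what the proof of Claim \ref{basis of boundary} extracts from (6.13) and (6.18) of \cite{Zuc2}, so that part needs no new argument.
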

\begin{proof}
By Remark 3.1, Ch.III in \cite{Fr}, $\omega_a$ is cohomologous to
$$\frac{dy_{a}}{y_{a}}\wedge dx_1 \wedge \ldots \wedge dx_n \otimes
\bigotimes_{i=1}^{n}(e_{i1}+z_{i}e_{i2})^{m_1} $$ up to scalar. By
Claim \ref{basis of boundary} and the homotopy equivalence, it
remains to show that $$\frac{dy_{a}}{y_{a}}\wedge dx_1 \wedge \cdots
\wedge dx_n \otimes \bigotimes_{i=1}^{n}(e_{i1}+z_{i}e_{i2})^{m_1}$$
is cohomologous to $\omega_a'$ up to a scalar. Note that the
difference of the above two forms is a linear combination of forms
of the following type:
$$
\frac{dy_{a}}{y_{a}}\wedge dx_1 \wedge \cdots \wedge dx_n  \otimes
\bigotimes_{i=1}^{n}(y_{i}e_{i2})^{t_i}\otimes (e_{i1}+x_{i}
e_{i2})^{m_1-t_{i}},
$$
which is exact once one of the $t_{i}$ is positive. In fact, assume
$t_1\geq 1$ for example, the exterior differential of the following
form
$$
y_{1} \cdot \frac{dy_{a}}{y_{a}}\wedge \bigwedge_{i=2}^{n} dx_i
\otimes (y_{1} e_{12})^{t_{1}-1} \otimes (e_{11}+x_{1}
e_{12})^{m_1-t_{1}+1} \otimes
\bigotimes_{i=2}^{n}(y_{i}e_{i2})^{t_i}\otimes (e_{i1}+x_{i}
e_{i2})^{m_1-t_{i}}
$$
is up to a scalar equal to
$$
\frac{dy_{a}}{y_{a}}\wedge dx_1 \wedge \cdots \wedge dx_n  \otimes
\bigotimes_{i=1}^{n}(y_{i}e_{i2})^{t_i}\otimes (e_{i1}+x_{i}
e_{i2})^{m_1-t_{i}}.
$$
This shows the claim.
\end{proof}
{\bf Step 2}: Convergence of Eisenstein series. For each $\omega_a$,
we consider the following formal differential form $E(\omega_a)$ on
$X$ obtained by symmetrization (see \S3, Ch.III in \cite{Fr}):
$$
E(\omega_a)= \sum_{M \in \Gamma / \Gamma_{\infty}}  \omega_a | M,
$$
where $M=\left(
           \begin{array}{cc}
             a^{(1)} & b^{(1)} \\
             c^{(1)}  & d^{(1)}  \\
           \end{array}
         \right)\times \cdots\times \left(
           \begin{array}{cc}
             a^{(n)} & b^{(n)} \\
             c^{(n)}  & d^{(n)}  \\
           \end{array}
         \right)
$ runs through a set of representatives of $\Gamma /
\Gamma_{\infty}$, and $\omega_a |M= M^{*}\omega_a$ by considering
$M$ as transformation on $X_{\infty}$. If the above series
converges, then $E(\omega_a)$ defines a genuine vector valued
differential form on $X$. The following simple transformation
formulas
\begin{eqnarray*}
  dz_{i} |M &=& (c^{(i)}z_{i}+d^{(i)})^{-2}dz_i,  \quad d\bar{z}_{i} |M =(c^{(i)}\bar{z}_{i}+d^{(i)})^{-2}d\bar{z}_{i}, \\
  y_{i}|M &=& |c^{(i)}z_{i}+d^{(i)}|^{-2}y_{i},\quad (e_{i1} + z_{i}e_{i2}) | M = (c^{(i)}z_{i}+d^{(i)})^{-1} (e_{i1} +
z_{i}e_{i2})
\end{eqnarray*}
show that the series $E(\omega_a)$ obeys the relation $E(\omega_a) =
E_{\alpha,\beta}(z) \cdot \omega_a$, where
$$
E_{\alpha,\beta}(z)=  \sum_{M \in \Gamma / \Gamma_{\infty}}
\prod_{i=1}^n (c^{(i)}z_{i}+d^{(i)})^{-\alpha_{i}}
(c^{(i)}\bar{z}_{i}+d^{(i)})^{-\beta_{i}}
$$
with
\begin{equation*}
  \alpha_{i}   =  \begin{cases}
m_1+1 \indent {\rm if} \indent i\in a\\
m_1+2 \indent {\rm if} \indent i \in a^c
\end{cases},\quad \beta_{i}   =  \begin{cases}
1 \indent {\rm if} \indent i\in a\\
0 \indent {\rm if} \indent i \in a^c
\end{cases}.
\end{equation*}
This is the type of Eisenstein series considered in \cite{Fr}. Note
that for the constant coefficient they consider the border case
$r=1$, which requires the technique of Hecke summation to show the
convergence of the series. In the current case, Lemma 5.7, Ch.I in
\cite{Fr} shows that $E_{\alpha,\beta}(z)$ is absolutely convergent.
We show next that $E(\omega_a)$ is closed. For that, one considers
the Fourier expansion of $E_{\alpha,\beta}(z)$ at $\infty$, and as
argued in Proposition 3.3, Ch.III in \cite{Fr}, the key point is to
study the constant Fourier coefficient. The formula $a_0(y,s)$ in
Page 170, \cite{Fr} for $s=0,\ r=\frac{m_1+2}{2}$ and $\alpha_i,\
\beta_i$ as above shows that the constant Fourier coefficient of
$E_{\alpha,\beta}(z)$ is of the form $A +
\frac{B}{\prod_{i=1}^{n}y_{i}^{2r-1}}$ with $A=1,\ B=0$ (see Theorem
4.9, Ch.III in \cite{Fr}). One concludes from the proof of
Proposition 3.3, Ch.III in \cite{Fr} that $E(\omega_a)$ is closed
(hence a cohomology class in $H^{l}(X, \V_m)$), and it induces the
same cohomology class as $\omega_a$ in $H^{l}(X_{\infty}, \V_m)$.
The same proposition also shows that the restriction of
$E(\omega_a)$ to other cusps is zero. By the theory of Eisenstein
cohomology, $\{E(\omega_a)\}_{a\subset \{1,\ldots,n\},\ |a|=l-n}$
forms a basis of $H^l_{\Eis}(X,\V_m)$.

{\bf Step 3}: Hodge type of Eisenstein cohomology classes. By the
expression of $\omega_a$ in Claim \ref{basis of X infty}, one knows
that $\omega_a$ extends naturally to an element in
$A_{\bar{X}}^{n,0}(\log S)\wedge
\overline{A_{\bar{X}}^{|a|,0}(*S)}\otimes E^{|m|,0}_{m}$. Now by the
expression of the Eisenstein series $E_{\alpha,\beta}(z)$ in Theorem
4.9, \cite{Fr} for $A=1,\ B=0$, one sees that $E(\omega_a)$ lies
again in $A_{\bar{X}}^{n,0}(\log S)\wedge
\overline{A_{\bar{X}}^{|a|,0}(*S)}\otimes E^{|m|,0}_{m}$. The
expression of $E(\omega_a)$ shows that it is of logarithmic
singularity at infinity $S$. Therefore $E(\omega_a)$ represents a
cohomology class in $F^{|m|+n}H^{l}(X, \V_m)$.
\end{proof}

\section{The mixed Hodge structure on the cohomology groups}\label{final results}
Let $j: X\to X^*$ be the natural inclusion, and $H_{(2)}^{n}(X^*,\V_m)$ the $L^2$-cohomology group. Recall that by
the $L^2$-harmonic theory, one has the following natural isomorphism:
$$
H_{(2)}^n(X^*, \V_m)\cong
\mathfrak{h}_{(2)}^{n}(X,\V_m)=\bigoplus_{p+q=n}\mathfrak{h}_{(2)}^{p,q}(X,\V_m).
$$
Therefore, one has a natural map $H_{(2)}^{n}(X^*,\V_m) \rightarrow H^{n}(X,\V_m)$ by taking a cohomology class
to its $L^2$-harmonic representative. We assert the following

\begin{proposition} \label{injectivity}
The natural map $H_{(2)}^{n}(X^*,\V_m) \rightarrow H^{n}(X,\V_m)$ is
injective.
\end{proposition}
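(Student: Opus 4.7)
The plan is to interpret the map sheaf-theoretically and exploit the fact that the Baily--Borel compactification $X^*$ has only isolated singularities, namely the $h$ cusps. Let $j\colon X\hookrightarrow X^*$ denote the open inclusion. With the standard normalization $IC(\V_m)|_X=\V_m$ in degree $0$, there is a canonical morphism $IC(\V_m)\to Rj_*\V_m$ in the derived category of $X^*$ whose induced map on hypercohomology is precisely the map $IH^n(X^*,\V_m)\to H^n(X,\V_m)$ of the proposition. I would place this morphism into the distinguished triangle
$$IC(\V_m)\longrightarrow Rj_*\V_m \longrightarrow C \stackrel{+1}{\longrightarrow}$$
on $X^*$ and analyze the cone $C$ near each cusp.

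The key local input is the middle perversity truncation axiom for $IC$. Since each cusp $p$ has real codimension $2n$ in $X^*$, the middle perversity function $\bar m$ satisfies $\bar m(2n)=n-1$, so $\mathcal{H}^k(IC(\V_m))_p=0$ for $k\geq n$, whereas for $k\leq n-1$ the stalk agrees with $\mathcal{H}^k((Rj_*\V_m)_p)$; equivalently, $IC(\V_m)_p\simeq \tau_{\leq n-1}(Rj_*\V_m)_p$. Consequently $C$ is supported on the finite cusp set $X^*-X$, and its cohomology sheaves $\mathcal{H}^k(C)$ vanish for all $k\leq n-1$.

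The conclusion then drops out of the hypercohomology long exact sequence
$$\cdots\to \HH^{n-1}(X^*,C)\to IH^n(X^*,\V_m)\to H^n(X,\V_m)\to\cdots$$
associated to the triangle: because $C$ is supported on the discrete cusp set with cohomology sheaves concentrated in degrees $\geq n$, the hypercohomology reduces to a direct sum of stalk cohomologies, giving $\HH^{n-1}(X^*,C)=\bigoplus_p \mathcal{H}^{n-1}(C)_p=0$, from which injectivity of $IH^n(X^*,\V_m)\to H^n(X,\V_m)$ is immediate. I do not foresee a serious obstacle here; the argument reduces the statement to the axiomatic truncation behavior of the intersection complex at a zero-dimensional stratum, and neither the specific form of $\V_m$ nor the internal geometry of $X$ enters beyond the mere fact that $X^*-X$ is a finite set of points.
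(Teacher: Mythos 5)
Your argument is correct, but it takes a genuinely different route from the paper. The paper proves the proposition analytically: it realizes $IH^n(X^*,\V_m)$ as the space of $L^2$-harmonic $n$-forms via the Zucker conjecture, and then shows that an $L^2$-harmonic form $\omega$ satisfying $\omega=D\alpha$ for some smooth form $\alpha$ on $X$ must vanish; this is done by decomposing $\omega$ into $(I,\bar J)$-types, passing to the logarithmic de Rham complex on the conjugated manifolds $X_{\bar I^c}$, using $E_1$-degeneration to replace $D$ by $D''$, and concluding with an integration by parts against the Higgs field ($\langle\omega_{I,\bar J},\omega_{I,\bar J}\rangle=\langle\alpha',\theta^*\omega_{I,\bar J}\rangle=0$). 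Your route is purely sheaf-theoretic: you use only that $X^*-X$ is a finite set of points of real codimension $2n$, so that $IC(\V_m)\simeq\tau_{\leq n-1}Rj_*\V_m$ near each cusp, the cone of $IC(\V_m)\to Rj_*\V_m$ is a skyscraper complex concentrated in degrees $\geq n$, and hence $\HH^{n-1}(X^*,C)=0$. Your argument is shorter and strictly more general: it yields injectivity of $IH^k(X^*,\V)\to H^k(X,\V)$ for every $k\leq n$, for an arbitrary local system, and for any compactification by finitely many isolated singular points, with none of the Hilbert modular geometry entering. What the paper's argument buys is that it stays entirely within the $L^2$-harmonic framework on which the rest of the paper is built, producing injectivity directly at the level of the harmonic representatives that are subsequently used to identify $H^{n}_{!}(X,\V_\R)$ with $IH^n(X^*,\V_\R)$ and to split the MHS over $\R$; it also sidesteps the one point your proof leaves implicit, namely that the map realized by forgetting the $L^2$-condition (through which the paper accesses $IH^n$) agrees with the hypercohomology of the attaching morphism $IC(\V_m)\to Rj_*\V_m$ --- a standard but nontrivial compatibility that you should at least cite if you want your argument to prove the statement as the paper uses it.
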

\begin{proof}
The proposition boils down to show the following statement: Assume
we have $\omega\in \mathfrak{h}_{(2)}^{n}(X,\V_m)$ and $\alpha \in
A_{X}^{n-1}(\V_m)$ satisfying $D(\alpha)=\omega$, then $\omega=0$.
In order to prove this we write $\omega =\sum_{p+q=n}\omega_{p,q}$
and further
$\omega_{p,q}=\sum_{I\subset\{1,\ldots,n\},|I|=p}\omega_{I,\bar
I^c}$, where $\omega_{I,\bar I^c}=f_{I,\bar J}dz_I\wedge \overline{
dz_{I^c}}$ (see Lemma \ref{fine decomposition of harmonic forms} and
Proposition \ref{pq component of harmonic forms}). It is enough to
show $\omega_{I,\bar I^c}=0$ for all possible $I$. Let $X_{\bar
I^c}$ be the complex manifold considered in the proof of Proposition
\ref{pq component of harmonic forms} for $J=I^c$ and $\bar{X}_{\bar
I^c}$ a smooth toroidal compactification of $X_{\bar I^c}$. Let
$\iota: X\to\bar X$ be the natural inclusion. By Deligne \cite{De0},
the inclusion
$$
\left([\bigoplus_{p+q=\cdot}\sA^{p,0}_{\bar{X}}(\log S)\wedge
\overline{\sA^{q,0}_{\bar{X}}(*S)}]\otimes \bar \V_m,D \right)
\hookrightarrow (\iota_{*}\sA^{\cdot}_{X}(\V_m),D)
$$
is a quasi-isomorphism. Furthermore, by $E_1$-degeneration of the
Hodge filtration, one has also the quasi-isomorphism
$$
\left([\bigoplus_{p+q=\cdot}\sA^{p,0}_{\bar{X}_{\bar I^c}}(\log
S)\wedge \overline{\sA^{q,0}_{\bar{X}_{\bar I^c}}(*S)}]\otimes \bar
\V_m,D\right)\cong
\left([\bigoplus_{p+q=\cdot}\sA^{p,0}_{\bar{X}_{\bar I^c}}(\log
S)\wedge \overline{\sA^{q,0}_{\bar{X}_{\bar I^c}}(*S)}]\otimes \bar
\V_m,D''_{\bar{X}_{\bar I^c}} \right).
$$
It is not difficult to check that $\omega\in
[\bigoplus_{p+q=n}A^{p,0}_{\bar{X}_{\bar I^c}}(\log S)\wedge
\overline{A^{q,0}_{\bar{X}_{\bar I^c}}(*S)}]\otimes \bar \V_m$. So
by the quasi-isomorphisms, we find actually $\alpha'\in
[\bigoplus_{p+q=n-1}A^{p,0}_{\bar{X}_{\bar I^c}}(\log S)\wedge
\overline{A^{q,0}_{\bar{X}_{\bar I^c}}(*S)}]\otimes \bar \V_m$ such
that $D''\alpha'=\omega$. Note for fixed $I$, $\omega_{I,\bar I^c}$
is holomorphic over $\bar{X}_{\bar I^c}$. One has then
\begin{eqnarray*}
  <\omega_{I,\bar J} ,\omega_{I,\bar J}> &=& <(D''\alpha')_{I,\bar J},\omega_{I,\bar J}> \\
    &=& <(\bar \partial_{\tilde X}\alpha')_{I,\bar J},\omega_{I,\bar J}>+<(\theta_{\tilde{X}}\alpha')_{I,\bar J},\omega_{I,\bar J}> \\
    &=& <\bar \partial_{\tilde X}\alpha',\omega_{I,\bar J}>+<\theta_{\tilde{X}}\alpha',\omega_{I,\bar J}> \\
    &=& <\theta_{\tilde{X}}\alpha',\omega_{I,\bar J}> \\
    &=& < \alpha',\theta^{*}_{\tilde{X}}\omega_{I,\bar J}>\\
    &=& < \alpha',0>\\
    &=& 0,
\end{eqnarray*}
and therefore we get $\omega_{I,\bar J}=0$. So $\omega=0$, and the
proof is completed.

\end{proof}
This proposition allows us to obtain an important byproduct of our study of the MHS on the cohomology groups.
Namely, we are able to show the truth of Conjecture \ref{HZ conjecture} in the case of Hilbert modualr varieties with coefficients.
\begin{theorem}\label{isomorphism of Hodge structures}
The natural isomorphism $r_k: H_{(2)}^{k}(X^*,\V_m)\cong IH^{k}(X^*,\V_m), 0 \leq  k \leq n $ is an isomorphism of Hodge structures.
\end{theorem}
\begin{proof}
Since by Corollary \ref{vanishing of intersection cohomology} the above statement is trivial for $k\neq n$, it suffices
to consider the case $k=n$. Theorem 5.4 and Remark 5.5 (i) \cite{HZ3} asserts that the natural map $H_{(2)}^n(X^*, \V_m)\to  H^n(X,\V_m)$,
which is just the composite of the isomorphism $r_k$ (of real vector spaces) with the natural morphism of mixed Hodge structures $IH^{k}(X^*,\V_m)\to H^n(X,\V_m)$,
is actually a morphism of mixed Hodge structures and its image is identified with the lowest weight of the MHS of $H^n(X,\V_m)$.
Now Proposition \ref{injectivity} implies further: \\
i) The morphism $IH^{k}(X^*,\V_m)\to H^n(X,\V_m)$ is injective and therefore
an isomorphism of Hodge structures $IH^{k}(X^*,\V_m)\cong W_{n+|m|}H^n(X,\V_m)$. \\
ii) An isomorphism of Hodge structures
$H_{(2)}^n(X^*, \V_m)\cong W_{n+|m|}H^n(X,\V_m)$. \\
Altogether, since both are identified with the same Hodge structure, the
$L^2$-cohomology and the intersection cohomology are isomorphic as Hodge structures.
\end{proof}

\begin{lemma}\label{lemma on MHS}
Let $(H_{\R},W_{.},F^{.})$ be a MHS with weights $\ge m+k$ and the
following properties:
$$
H_{\C}=F^0=\cdots=F^{m+n}\supsetneq F^{m+n+1}=0,\ 0=W_{m+k}\subset
\cdots\subset W_{2(m+k)}=H_{\R}
$$
for certain $\frac{k+1}{2}\leq n\leq k$. Then the weight filtration
must be of the form
$$
0=W_{m+k}=\cdots=W_{2(m+n)-1}\subsetneq
W_{2(m+n)}=\cdots=W_{2(m+k)}=H_{\R}.
$$
\end{lemma}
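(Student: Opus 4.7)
The plan is to analyze each weight graded piece $Gr^W_l H$ separately, and to combine the extreme degeneracy of the Hodge filtration with the reality of the MHS to force all but one graded piece to vanish.

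First, each $Gr^W_l H$ carries a pure $\R$-Hodge structure of weight $l$, and the induced Hodge filtration is given by the standard formula $F^p Gr^W_l = (F^p H_\C \cap W_l + W_{l-1})/W_{l-1}$. The hypotheses $F^{m+n} H_\C = H_\C$ and $F^{m+n+1} H_\C = 0$ therefore give $F^{m+n} Gr^W_l = Gr^W_l$ and $F^{m+n+1} Gr^W_l = 0$. For a pure weight-$l$ Hodge structure these two conditions force the Hodge decomposition to collapse to a single type, namely $Gr^W_l H_\C = H^{m+n,\,l-m-n}$.

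The decisive step is the reality constraint $\overline{H^{p,q}} = H^{q,p}$ on the pure piece $Gr^W_l$: complex conjugation sends the unique non-zero component $H^{m+n,\,l-m-n}$ to $H^{l-m-n,\,m+n}$, which must therefore also be non-zero. Since the previous step permits only the single bidegree $(m+n,\,l-m-n)$, we must have $l-m-n=m+n$, i.e., $l=2(m+n)$, unless $Gr^W_l H = 0$. The hypothesis $(k+1)/2 \le n \le k$ (with $m \ge 0$) guarantees $m+k+1 \le 2(m+n) \le 2(m+k)$, so this single admissible weight lies strictly inside the prescribed range $[m+k,\,2(m+k)]$.

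Consequently $Gr^W_l H = 0$ for every $l \neq 2(m+n)$, from which the stated shape follows immediately: the vanishing of all intermediate gradings forces $W_{2(m+n)-1} = W_{m+k} = 0$, the vanishing of all higher gradings forces $W_{2(m+n)} = W_{2(m+k)} = H_\R$, and the strict inclusion is automatic as soon as $H \neq 0$. There is no serious obstacle; the only care needed is the standard bookkeeping that transports the hypotheses on $F^\cdot H_\C$ to the induced filtration on $Gr^W_l$, after which the reality argument is immediate.
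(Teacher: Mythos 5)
Your proof is correct and follows essentially the same route as the paper's: both arguments transport the collapsed Hodge filtration to each weight-graded quotient, observe that only the single bidegree $(m+n,\,l-m-n)$ can occur there, and then invoke Hodge symmetry to force $l=2(m+n)$ for any non-vanishing piece. Your write-up is if anything a slightly cleaner, uniform version of the paper's argument, which treats the weights below and above $2(m+n)$ in two separate steps.
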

\begin{proof}
By the assumption on the Hodge filtration and the Hodge symmetry, it
is easy to see that each graded piece of the weight filtration can
have at most one Hodge type. This implies that the first possible
weight with non-zero dimension is $W_{2(m+n)}$. But then
$W_{2(m+n)}$ must be the whole space. This is because for any $i\geq
2(m+n)+1$, the unique Hodge component $(\frac{i}{2},\frac{i}{2})$ of
$Gr_{i}^{W}$ (assume $i$ even), which is a quotient of
$F^{\frac{i}{2}}\cap W_{i,\C}=0$, is zero. This implies the result.
\end{proof}

\begin{proposition}\label{split MHS}
Let $(H_{\R},W_{.},F^{.})$ be a MHS with weights $\ge m+n$ with
$$
H_{\C}=F^0\supset\cdots\supset F^{m+n}\supsetneq F^{m+n+1}=0,\
0\subset W_{m+n}\subset \cdots\subset W_{2(m+n)}=H_{\R}.
$$ Let $H_{\R}=H_{1,\R}\oplus H_{2,\R}$ be a vector space
decomposition. Assume that $H_{2,\R}\subset F^{m+n}$ and
$H_{1,\R}\subset W_{m+n}$. Then the weight filtration is of the form
$0\subset H_{1,\R}=W_{m+n}=\cdots=W_{2(m+n)-1}\subsetneq
W_{2(m+n)}=H_{\R}$, and the MHS $(H_{\R},W_{.},F^{.})$ is split over
$\R$.
\end{proposition}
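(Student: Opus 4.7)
The plan is to exploit the reality of $H_{2,\R}$ together with the assumption $H_{2,\R}\subset F^{m+n}$ in order to pin down where $H_{2,\R}$ can sit with respect to the weight filtration, then to combine with the given direct sum decomposition to force the shape of $W_\cdot$, and finally to verify that the decomposition is compatible with the Hodge filtration.

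First, since $H_{2,\R}$ is real, its complexification satisfies $H_{2,\C}\subset F^{m+n}\cap\bar F^{m+n}$. I would then show, by induction on $k$, that $F^{m+n}\cap\bar F^{m+n}\cap W_{k,\C}=0$ for every $k\leq 2(m+n)-1$. The base case uses that the weights are $\geq m+n$, so $W_{m+n-1,\C}=0$; the inductive step is immediate because on the pure Hodge structure $\Gr_k^W$ any class in $F^{m+n}\cap\bar F^{m+n}$ would have to be a sum of components of type $(p,q)$ with $p\geq m+n$, $q\geq m+n$, and $p+q=k$, which is impossible as long as $k<2(m+n)$. In particular $H_{2,\R}\cap W_{2(m+n)-1}=0$. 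Using the decomposition $H_\R=H_{1,\R}\oplus H_{2,\R}$ together with $H_{1,\R}\subset W_{m+n}$, any $w\in W_k$ (for $m+n\leq k\leq 2(m+n)-1$) decomposes as $w=h_1+h_2$ with $h_1\in H_{1,\R}\subset W_k$, forcing $h_2=w-h_1\in H_{2,\R}\cap W_k=0$. This gives $W_k\subset H_{1,\R}$, and combined with $H_{1,\R}\subset W_{m+n}\subset W_k$ yields $W_k=H_{1,\R}$ for all such $k$, establishing the asserted shape of the weight filtration.

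The projection $H_{2,\R}\to \Gr_{2(m+n)}^W$ is then an isomorphism of real vector spaces, and since $H_{2,\C}\subset F^{m+n}\cap\bar F^{m+n}$, the pure Hodge structure on $\Gr_{2(m+n)}^W$ is of Hodge--Tate type $(m+n,m+n)$. To exhibit the MHS as split over $\R$, it remains to check that the decomposition $H_\R=H_{1,\R}\oplus H_{2,\R}$ respects the Hodge filtration, i.e.\ that $F^p H_\C=(F^p\cap H_{1,\C})\oplus(F^p\cap H_{2,\C})$ for every $p$. For $p>m+n$ both sides vanish by the hypothesis $F^{m+n+1}=0$ together with the purity of $H_{1,\C}$ at weight $m+n$ and the Hodge--Tate type of $H_{2,\C}$; for $p\leq m+n$ the inclusion $H_{2,\C}\subset F^{m+n}\subset F^p$ gives one containment, and for the reverse any $v\in F^p H_\C$ decomposes as $v=v_1+v_2$ with $v_1=v-v_2\in F^p\cap H_{1,\C}$. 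The sole non-formal step is the Hodge-type vanishing in the first paragraph, but that is forced by the incompatibility of the constraints $p+q=k<2(m+n)$ and $p,q\geq m+n$.
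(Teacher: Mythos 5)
Your proof is correct. The underlying mechanism is the same as the paper's --- Hodge symmetry on the pure graded pieces $Gr^W_k$ combined with the constraint $F^{m+n+1}=0$ forces everything below the top weight to avoid $F^{m+n}\cap \bar F^{m+n}$ --- but you package it differently. The paper passes to the quotient MHS on $H_{\R}/W_{m+n}$ and invokes Lemma \ref{lemma on MHS} to pin down the weight filtration, then identifies $W_{m+n}$ with $H_{1,\R}$ via the sandwich $H_{\C}=H_{1,\C}\oplus H_{2,\C}\subset W_{m+n,\C}\oplus H^{m+n,m+n}\subset W_{2(m+n),\C}$ with $H^{m+n,m+n}=F^{m+n}\cap\bar F^{m+n}$. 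You instead prove the vanishing $F^{m+n}\cap\bar F^{m+n}\cap W_{k,\C}=0$ for $k\le 2(m+n)-1$ directly in $H$ by induction up the weight filtration, deduce $H_{2,\R}\cap W_{2(m+n)-1}=0$, and then squeeze $W_k=H_{1,\R}$ using the direct sum decomposition; this avoids the quotient MHS and the auxiliary lemma entirely, at the cost of a slightly longer verification that the real decomposition is compatible with $F^{\cdot}$ (which the paper gets almost for free from $H_{2,\C}=H^{m+n,m+n}\subset F^{m+n}$). One shared loose end, not worth fixing: the strictness $W_{2(m+n)-1}\subsetneq W_{2(m+n)}$ in the conclusion silently presumes $H_{2,\R}\neq 0$, an implicit assumption in the paper's argument as well.
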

\begin{proof}
Consider the quotient MHS on
$(\frac{H_{\R}}{W_{m+n}},\tilde{W}_.,\tilde{F}^{.})$, where $\sim$
means the quotient filtration. By the assumption on $H_{1,\R}$ and
$H_{2,\R}$, one sees that the above quotient MHS is of the form in
Lemma \ref{lemma on MHS}. Thus one obtains the assertion about the
weight filtration $W_.$ except the equality $W_{m+n}=H_{1,\R}$.

Set $H^{p,q}:=F^p\cap \bar F^q\cap W_{p+q,\C}$. As $W_{m+n}$ is of the lowest weight in the weight filtration,
it has a pure Hodge structure of weight $m+n$ induced by $F^.$ and its Hodge
$(p,q)$-component on $W_{m+n}$ is given by $F^p\cap \bar F^q\cap
W_{m+n,\C}$. So $W_{m+n,\C}=\bigoplus_{p+q=m+n}H^{p,q}$ and
$H^{p,q}\cap W_{m+n,\C}=0$ for $p+q\neq m+n$. Because for
$m+n<p+q<2(m+n)$ $W_{p+q}=W_{m+n}$, one has $H^{p,q}=H^{p,q}\cap
W_{m+n,\C}=0$. Now consider the weight $2(m+n)$ pure Hodge structure
on $Gr_{2(m+n)}^{W}$. By Hodge symmetry and the indexing of the
Hodge filtration, $H^{p,2(m+n)-p}$ is zero unless $p=m+n$. In that
case, $H^{m+n,m+n}=F^{m+n}\cap \bar F^{m+n}$. Note that
$H_{2,\R}\subset H^{m+n,m+n}$ by the assumption. This implies the
following relation:
$$
W_{2(m+n),\C}=H_{\C}=H_{1,\C}\oplus H_{2,\C}\subset W_{m+n,\C}\oplus
H^{m+n,m+n}\subset W_{2(m+n),\C}.
$$
Therefore $H_{2,\R}=W_{m+n}$ and $H_{2,\C}=H^{m+n,m+n}$ hold, which
also shows the relation $W_{l,\C}=\bigoplus_{p+q\leq l}H^{p,q}$ for
each $m+n\leq l\leq 2(m+n)$. To show that $(H_{\R},W_{.},F^{.})$ is
split over $\R$, it remains to show that $F^p=\bigoplus_{r\geq
p}H^{r,s}$ holds for each $p$ (see \S2 in \cite{CKS}). Because
$H=W_{m+n,\C}\oplus H_{2,\C}$ as shown above, and $H_{2,\C}\subset
F^{m+n}$ by assumption, it follows that $F^p=F^p(W_{m+n,\C})\oplus
H_{2,\C}$ for each $p$. Now that $F^p(W_{m+n,\C})=\bigoplus_{r\geq
p}H^{r,m+n-r}$, one obtains then $F^p=\bigoplus_{r\geq p}H^{r,s}$
for each $p$. This proves the result.
\end{proof}

Now we proceed to deduce our main results of the paper from the
above results, together with the established information in previous
sections. Let us return to the decomposition (see \S\ref{Eisenstein
cohomology}):
$$
H^k(X,\V_\R)=H_{!}^{k}(X,\V_\R)\oplus H_{\Eis}^{k}(X,\V_\R).
$$
By Proposition \ref{injectivity} and Theorem \ref{isomorphism of Hodge structures}, we denote again by $IH^k(X^*,
\V_m)$ the image of it in $H^k(X,\V_\R)$. The cohomology classes in
$H^k_!(X,\V_\R)$ are representable by differential forms with
compact support, which are square integrable with respect to
\emph{any} complete K\"{a}hler metric on $X$. Therefore
$H^k_!(X,\V_\R)\subset H_{(2)}^k(X^*, \V_m)=IH^k(X^*,
\V_m)$. The following result
improves Theorem \ref{Li-Schwermer}:
\begin{theorem}\label{improvement of Li-Schwemer}
For $k\neq n$, one has $H^k(X,\V_m)=H_{\Eis}^k(X,\V_m)$.
Furthermore, for $0\leq k\leq n-1$ and $k=2n$, it holds
$H^k(X,\V_m)=0$, and for $n+1\leq k\leq 2n-1$,
$H^k(X,\V_m)=H_{\Eis}^k(X,\V_m)\stackrel{r}{\cong} H^k(\partial
X^\sharp,\V_m)$.
\end{theorem}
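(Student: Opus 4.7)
The plan is to combine three ingredients already on the table: the Eisenstein decomposition $H^k(X,\V_m)=H^k_!(X,\V_m)\oplus H^k_{\Eis}(X,\V_m)$, the $L^2$-inclusion $H^k_!(X,\V_m)\subset IH^k(X^*,\V_m)$ observed just before the statement, and the vanishing results established in earlier sections. The argument naturally splits into the regular case, where Li--Schwermer applies directly, and the non-regular case, where both summands can be killed separately.

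First I would handle all $k\neq n$ uniformly. Corollary \ref{vanishing of intersection cohomology} gives $IH^k(X^*,\V_m)=0$, and the $L^2$-inclusion forces $H^k_!(X,\V_m)=0$. The Eisenstein decomposition then immediately yields the first assertion $H^k(X,\V_m)=H^k_{\Eis}(X,\V_m)$ for every $k\neq n$. It remains to identify $H^k_{\Eis}(X,\V_m)$ with $H^k(\partial X^\sharp,\V_m)$ via $r$ and to deduce the sharper vanishing in the ranges $0\le k\le n-1$ and $k=2n$.

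Next I would distinguish two cases according to whether $m_1=\cdots=m_n$ holds. Since $\V_m$ is assumed non-trivial, the equal-entries condition forces the common value to be positive and hence $\V_m$ to be regular; in that case Theorem \ref{Li-Schwermer} supplies both the vanishing $H^k(X,\V_m)=0$ for $0\le k\le n-1$ and the restriction isomorphism $H^k(X,\V_m)\simeq H^k(\partial X^\sharp,\V_m)$ for $k\ge n+1$. The vanishing at $k=2n$ is then free because $\partial X^\sharp$ has real dimension $2n-1$, so $H^{2n}(\partial X^\sharp,\V_m)=0$. In the complementary case, where $m_1=\cdots=m_n$ fails, Lemma \ref{lemma of Zucker} gives $H^k(\partial X^\sharp,\V_m)=0$ for every $k$; since by the very definition of Eisenstein cohomology recalled at the start of Section \ref{Eisenstein cohomology} the restriction map $r$ is injective on $H^k_{\Eis}$, this forces $H^k_{\Eis}(X,\V_m)=0$ for all $k$. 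Combined with the vanishing of $H^k_!$ for $k\neq n$ from the first step, this yields $H^k(X,\V_m)=0$ throughout the range $k\neq n$, and the asserted isomorphisms hold trivially.

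The logical heart of the argument is thus packaging rather than analysis: the new content for non-regular local systems, which falls outside the scope of Li--Schwermer, is handled entirely by Lemma \ref{lemma of Zucker} (killing the Eisenstein part) together with Corollary \ref{vanishing of intersection cohomology} (killing the interior part). The only point to double-check is the injectivity of $r$ on $H^k_{\Eis}$, which is built into the definition. I do not expect a serious obstacle; the main care is simply to invoke the correct vanishing for each of the two cases and to treat $k=2n$ via dimension of $\partial X^\sharp$ in the regular sub-case.
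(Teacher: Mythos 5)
Your proposal is correct and follows essentially the same route as the paper: the inclusion $H^k_!(X,\V_m)\subset IH^k(X^*,\V_m)$ together with Corollary \ref{vanishing of intersection cohomology} reduces everything to Eisenstein cohomology for $k\neq n$, and the remaining claims are exactly the combination of Theorem \ref{Li-Schwermer} (equal $m_i$, hence regular since $\V_m$ is non-trivial) and Lemma \ref{lemma of Zucker} (unequal $m_i$), with $k=2n$ handled by $\dim_\R\partial X^\sharp=2n-1$. Your write-up merely makes explicit the case division that the paper leaves implicit.
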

\begin{proof}
By the above discussion, one knows that $$\dim
H_{\Eis}^k(X,\V_m)\leq \dim H^k(X,\V_m)\leq \dim H_{(2)}^k(X^*,
\V_m)+\dim H_{\Eis}^k(X,\V_m).$$ By Corollary \ref{vanishing of
intersection cohomology}, it follows that
$H^k(X,\V_m)=H_{\Eis}^k(X,\V_m)$ for $k\neq n$. The remaining part
of the theorem follows from Lemma \ref{lemma of Zucker} and Theorem
\ref{Li-Schwermer}. Also one notices that $H_{\Eis}^{2n}(X,\V_m)=0$,
since $\partial X^\sharp$ is of real dimension $2n-1$.
\end{proof}

\begin{theorem}\label{mixed hodge structure of cohomology}
Let $\V_m$ be the irreducible non-trivial local system as above and
the cohomology group $H^k(X,\V_m)$ is equipped with Saito's MHS. Then:
\begin{itemize}
\item [(i)] For $n+1\leq k\leq 2n-1$, one has $H^k(X,\V_m)=H^k_{\Eis}(X,\V_m)$ and the MHS on
$H^k(X,\V_m)$ is pure and of pure type $(|m|+n,|m|+n)$.
\item [(ii)] $IH^n(X^*, \V_m)=H^{n}_{!}(X,\V_{m})$ and
$H^n(X,\V_m)=IH^n(X^*, \V_m)\oplus H^n_{\Eis}(X,\V_m)$ is the
splitting of the weight filtration over $\R$.
\item [(iii)]  For $n \leq k\leq 2n-1$ the weight filtration is of the following form:
$$
0\subset W_{|m|+k}=\cdots=W_{2(|m|+n)-1}\subset
W_{2(|m|+n)}=\cdots=W_{2(|m|+k)}=H^k(X,\V_m),
$$
where $W_{|m|+k}=IH^k(X^*, \V_m)$ and $Gr_{2(|m|+n)}^{W}\cong
H^k_{\Eis}(X,\V_m)$. The Hodge filtration is of the following form:
$$
H^k(X,\V_m)=F^0\supset \cdots \supset F^{|m|+n}\supset 0,
$$
in which $H^k_{\Eis}(X,\V_m)\subset F^{|m|+n}$ holds.
\end{itemize}
\end{theorem}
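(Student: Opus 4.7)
The plan is to deduce the structure of the MHS by combining three ingredients: the Eisenstein reduction from Theorem~\ref{improvement of Li-Schwemer}, the purity and injectivity for the intersection piece from Theorem~\ref{pure hodge structure on intersection} and Proposition~\ref{injectivity}, and the Hodge-filtration control for Eisenstein classes from Theorem~\ref{type of Eisenstein coho}. A preliminary observation used throughout is that the Higgs bundle $E_m=\bigoplus_{p+q=|m|}E_m^{p,q}$ is supported in Hodge bidegrees with $p\le|m|$, so by the description of $\Omega^{*}_{P}(E_m,\theta_m)$ in \S\ref{logarithmic higgs cohomology} together with Proposition~\ref{logarithmic higgs cohomology computes the grading of hodge filtration}, one has $F^{|m|+n+1}H^k(X,\V_m)=0$ for all $k$.

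For (i), fix $n+1\le k\le 2n-1$. Theorem~\ref{improvement of Li-Schwemer} gives $H^k(X,\V_m)=H^k_{\Eis}(X,\V_m)$, and Theorem~\ref{type of Eisenstein coho} places this space inside $F^{|m|+n}$. Combined with the preliminary vanishing $F^{|m|+n+1}=0$ one gets $F^{|m|+n}=H^k$, and by Hodge symmetry of the real MHS also $\bar F^{|m|+n}=H^k$. Hence every Hodge $(p,q)$-component vanishes unless $p=q=|m|+n$, so all classes are Hodge--Tate of type $(|m|+n,|m|+n)$; this forces the weight to be $2(|m|+n)$, and the lower Deligne--Saito bound $|m|+k\le 2(|m|+n)$ (valid for $k\le 2n$) confirms compatibility.

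For (ii), view $IH^n(X^*,\V_m)$ via Proposition~\ref{injectivity} as an injected subspace of $H^n(X,\V_m)$; by Theorem~\ref{pure hodge structure on intersection} it is pure of weight $|m|+n$, hence sits inside $W_{|m|+n}$. Meanwhile Theorem~\ref{type of Eisenstein coho} gives $H^n_{\Eis}\subset F^{|m|+n}$. To rule out overlap, any $\alpha\in IH^n\cap H^n_{\Eis}$ would be pure of weight $|m|+n$ and lie in $F^{|m|+n}$, forcing type $(|m|+n,0)$; since both $IH^n$ and $H^n_{\Eis}$ are stable under complex conjugation (being defined over $\R$), $\bar\alpha$ would simultaneously have type $(0,|m|+n)$ and lie in $F^{|m|+n}$, a contradiction. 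Combined with $H^n_{!}\subseteq IH^n$ and the Eisenstein decomposition $H^n=H^n_{!}\oplus H^n_{\Eis}$, a dimension count yields both $H^n_{!}=IH^n$ and $H^n=IH^n\oplus H^n_{\Eis}$. Applying Proposition~\ref{split MHS} with $H_{1,\R}=IH^n$, $H_{2,\R}=H^n_{\Eis}$ (taking the $m$ of that proposition to be $|m|$) upgrades this vector-space decomposition to a splitting of the MHS over $\R$, with summands pure of weights $|m|+n$ and $2(|m|+n)$.

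Part (iii) then follows formally: for $n+1\le k\le 2n-1$, purity from (i) collapses the weight filtration at $2(|m|+n)$; for $k=n$, the splitting from (ii) yields $W_{|m|+n}=IH^n(X^*,\V_m)$ and $Gr^{W}_{2(|m|+n)}\cong H^n_{\Eis}(X,\V_m)$. The shape of $F^{\cdot}$ and the inclusion $H^k_{\Eis}\subset F^{|m|+n}$ are immediate from Theorem~\ref{type of Eisenstein coho} and the preliminary vanishing. The main obstacle is the splitting in (ii): securing $IH^n\cap H^n_{\Eis}=0$ requires combining the weight-purity of intersection cohomology with the Hodge-filtration extremality of Eisenstein classes, and the conjugation argument sketched above is exactly where all three inputs must interact simultaneously; once it is in place, Proposition~\ref{split MHS} packages the rest mechanically.
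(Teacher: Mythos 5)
Your proposal is correct and follows essentially the same route as the paper: parts (i) and (iii) come from Theorem \ref{improvement of Li-Schwemer}, Theorem \ref{type of Eisenstein coho} and the Hodge-symmetry argument that the paper isolates as Lemma \ref{lemma on MHS}, while part (ii) rests on Proposition \ref{split MHS} with the containments $H^n_{\Eis}\subset F^{|m|+n}$ and $H^n_{!}\subset IH^n\subset W_{|m|+n}$. The only cosmetic difference is in (ii): the paper applies Proposition \ref{split MHS} directly to the given decomposition $H^n_{!}\oplus H^n_{\Eis}$ and reads off $H^n_{!}=IH^n$ from its conclusion $H_{1,\R}=W_{|m|+n}$, whereas you first prove $IH^n\cap H^n_{\Eis}=0$ by a conjugation argument and a dimension count and then feed $IH^n\oplus H^n_{\Eis}$ into the proposition.
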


\begin{proof}
For $n+1\leq k\leq 2n-1$, (i) and (iii) follows from Theorem
\ref{improvement of Li-Schwemer}, \ref{type of Eisenstein coho} and
Lemma \ref{lemma on MHS}. For $k=n$, one applies Proposition
\ref{split MHS} for $H_{\R}=H^n(X,\V_m)$, $H_{1,\R}=H^n_{!}(X,\V_m)$
and $H_{2,\R}=H^n_{\Eis}(X,\V_m)$. The condition for $H_2$ follows
from Theorem \ref{type of Eisenstein coho}. The relation
$H_{1,\R}\subset IH^n(X^*, \V_m)\subset W_{|m|+n}$ follows from
the above discussion. Then Proposition \ref{split MHS} implies that
$$
H_{1,\R}=IH^n(X^*, \V_m)= W_{|m|+n},
$$
and the splitting of the MHS over $\R$.
\end{proof}
For the MHS $(H^k(X,\V_m),W_.,F^.)$, put
$$
h^{P,Q}_k:=\dim Gr_{F}^{P}Gr_{\bar F}^{Q}Gr_{P+Q}^{W}H^k(X,\V_m),\
H^{P,Q}_k:=F^{P}\cap \bar F^{Q}\cap W_{P+Q,\C}.$$ By Theorem
\ref{mixed hodge structure of cohomology}, $\dim
H^{P,Q}_k=h^{P,Q}_k$.

\begin{theorem}\label{dimension formula of other degrees}
Notation as above. The following statements hold:
\begin{itemize}
\item[(i)] $H^{k}(X,\V_m)=0$ for $0\leq k\leq n-1$ and $k=2n$.
\item[(ii)] If $m_1=\cdots=m_n$, then for $n+1\leq k\leq 2n-1$
$$
h_k^{|m|+n,|m|+n}:=\dim_\C
F^{|m|+n}W_{2(|m|+n)}H^{k}(X,\V_m)=\dim_\C
H^{k}(X,\V_m)=\binom{n-1}{k-n}h,
$$
where $h$ is the number of cusps.
\item[(iii)] If not all $m_i$ are equal, then $H^{k}(X,\V_m)=0$ for $n+1\leq k\leq 2n-1$.
\end{itemize}
\end{theorem}

\begin{proof}
They follow directly from Theorem \ref{improvement of Li-Schwemer},
\ref{mixed hodge structure of cohomology}, \ref{type of Eisenstein
coho} and Lemma \ref{lemma of Zucker}.
\end{proof}

\begin{theorem}\label{algebraic description of Hodge components}
One has the following natural isomorphisms:
\begin{itemize}
\item [(i)] For $n+1\leq k\leq 2n-1$,
$H_k^{|m|+n,|m|+n}\cong H^{k-n}(\bar X,
\bigotimes_{i=1}^{n}\sL^{m_i+2}_{i})$.
  \item [(ii)]
  $H_n^{|m|+n,0}\cong H^{0}(\bar X, \sO_{\bar X}(-S)\otimes
\bigotimes_{i=1}^{n}\sL^{m_i+2}_{i}),\ H_n^{|m|+n,|m|+n}\cong
H^{0}(S, \bigotimes_{i=1}^{n}\sL^{m_i+2}_{i}|_S)$, and for $0\leq
P\leq |m|+n-1, \ P+Q=|m|+n$,
$$
H_n^{P,Q}\cong \bigoplus_{\stackrel{I\subset \{1,\ldots,n\},}{
|m_I|+|I|=P}}H^{k-|I|}(\bar X, \bigotimes_{i\in
I}\sL^{m_i+2}_{i}\otimes \bigotimes_{i\in I^c}\sL^{-m_i}_{i}).
$$
\end{itemize}
\end{theorem}
\begin{proof}
(i) follows directly from Theorem \ref{mixed hodge structure of
cohomology} (i) and Corollary \ref{algebraic description}. By
Theorem \ref{mixed hodge structure of cohomology}, one has for
$0\leq P\leq |m|+n-1$ and $P+Q=|m|+n$,
$$
H_n^{P,Q}=Gr_{F}^{P}H^n(X,\V_m).
$$
The isomorphisms for these $H_n^{P,Q}$ follow from Corollary
\ref{algebraic description}. By Theorems \ref{mixed hodge structure
of cohomology} and \ref{dimension formula of other degrees} (ii),
one has
$$
Gr_{F}^{|m|+n}H^n(X,\V_m)=F^{|m|+n}=H_n^{|m|+n,0}\oplus
H_n^{|m|+n,|m|+n},
$$
and $H_n^{|m|+n,0}=H_{(2)}^{|m|+n,0}(X^*,\V_m)$. By Corollary
\ref{hodge decomp of inter cohomlogy}, $H_n^{|m|+n,0}\cong
H^{0}(\bar X, \sO_{\bar X}(-S)\otimes
\bigotimes_{i=1}^{n}\sL^{m_i+2}_{i})$, and by Corollary
\ref{algebraic description}, $Gr_{F}^{|m|+n}H^n(X,\V_m)\cong
H^{0}(\bar X, \bigotimes_{i=1}^{n}\sL^{m_i+2}_{i})$ (the previous
two isomorphisms are in fact equalities). Finally, one notes that
$H^{1}(\bar X, \sO_{\bar X}(-S)\otimes
\bigotimes_{i=1}^{n}\sL^{m_i+2}_{i})$ is dual to $H^{n-1}(\bar
X,\bigotimes_{i=1}^{n}\sL^{-m_i}_{i})$ by the Serre duality. Since
it is zero by the next lemma, the long exact sequence of sheaf
cohomologies of the following short exact sequence
$$
0\to \sO_{\bar X}(-S)\otimes \bigotimes_{i=1}^{n}\sL^{m_i+2}_{i}\to
\bigotimes_{i=1}^{n}\sL^{m_i+2}_{i} \to
\bigotimes_{i=1}^{n}\sL^{m_i+2}_{i}|_S\to 0
$$
yields the isomorphism $H_n^{|m|+n,|m|+n}\cong
H^0(S,\bigotimes_{i=1}^{n}\sL^{m_i+2}_{i}|_S)$.
\end{proof}

\begin{lemma}\label{euler
characteristic} For $l<n$, $H^l(\bar X,
\bigotimes_{i=1}^{n}\sL_i^{-m_i})=0$.
\end{lemma}
\begin{proof}
By Propositions \ref{degeneration of ss} and \ref{cohomology sheaves
matrix}, $\dim H^{l}(\bar X, \bigotimes_{i=1}^{n}\sL_i^{-m_i})\leq
\dim H^{l}(X,\V_m)$, which is zero for $0\leq l\leq n-1$ by Theorem
\ref{improvement of Li-Schwemer}. Thus $H^{l}(\bar X,
\bigotimes_{i=1}^{n}\sL_i^{-m_i})=0$ for $l<n$.
\end{proof}
We end this paper with some discussions on
$\dim H^0(\bar{X}_{\bar{J}}, \sO_{\bar{X}_{\bar{J}}}(-S)\otimes
\bigotimes_{j=1}^{n}\sL_{j}^{m_j+2})$, as well as the dimension
of the middle degree cohomology.
\begin{proposition}\label{Riemann-Roch formula}
Let $c_i$ be the $i$-th chern class of $\bar X_{\bar J}$, $c'_i$ be
the $i$-th chern class of $T_{\bar{X}_{\bar{J}}}(-\log S_{\bar J})$,
the dual vector bundle of $\Omega^1_{\bar{X}_{\bar J}}(\log S_{\bar
J})$, and $P(c_1,\ldots,c_n)$ be the degree $n$ polynomial computing
$\chi(\bar{X}_{\bar J},\sO_{\bar{X}_{\bar J}})$ in the
Hirzebruch-Riemann-Roch formula. The following formula holds:
$$
\dim H^0(\bar{X}_{\bar{J}}, \sO_{\bar{X}_{\bar{J}}}(-S)\otimes
\bigotimes_{j=1}^{n}\sL_{j}^{m_j+2})=(-1)^n[\prod_{i=1}^{n}(m_i+1)-1]P(c'_1,\ldots,c'_n)+(-1)^nP(c_1,\ldots,c_n).
$$
\end{proposition}
\begin{proof}
Consider first the $J=\emptyset$ case. The above lemma implies that
\begin{eqnarray*}
   \dim H^0(\bar X, \sO_{\bar X}(-S)\otimes
\bigotimes_{j=1}^{n}\sL_{j}^{m_j+2})&=&  \dim H^n(\bar X, \bigotimes_{i=1}^{n}\sL_i^{-m_i})\\
&=&(-1)^n\chi(\bar X, \bigotimes_{i=1}^{n}\sL_i^{-m_i})  \\
   &=&(-1)^n[\prod_{i=1}^{n}(m_i+1)-1]\prod_{i=1}^{n}\sL_{i}+(-1)^n\chi(\bar X,\sO_{\bar
X}).
\end{eqnarray*}
By Hirzebruch proportionality in the non-compact case (see Theorem
3.2 in \cite{Mum}) and Proposition \ref{basic logarithmic higgs
bundles}, it follows that
$\prod_{i=1}^{n}\sL_{i}=P(c'_1,\ldots,c'_n)$. For a general $J$, one
notices that the corresponding statement in Lemma \ref{euler
characteristic} holds also for $X_{\bar J}$. The above argument
works verbatim for the general case.
\end{proof}
However one can not conclude that
$P(c_1,\ldots,c_n)=P(c'_1,\ldots,c'_n)$ in general. In fact, by
Proposition \ref{L2 condition is algebraic}, the space
$H^0(\bar{X}_{\bar{J}}, \sO_{\bar{X}_{\bar{J}}}(-S)\otimes
\bigotimes_{j=1}^{n}\sL_{j}^{m_j+2})$ is exactly the space of cusp
forms on $\HH^n$ with respect to the discrete subgroup $\Gamma_{J}$.
The dimension formula of cusp forms by means of Selberg's trace
formula in such a case is the main result of the work \cite{Shimizu}
by Shimizu for regular $m$ (see \cite{Ish} for the irregular case).
By his formula in the regular case (Theorem 11 \cite{Shimizu}) the
dimension is not proportional to $\prod_{i=1}^{n}(m_i+1)$ because of
an error term coming from the cusps. The same formula also shows
that the dimension of cusp forms with respect to $\Gamma_J$ is
generally different from that with respect to $\Gamma$. These
subtleties do not arise in the case of compact quotients studied by
Matsushima-Shimura \cite{MS}. Now for each subset $J\subset
\{1,\cdots,n\}$, we put
$$
h(J,m)=\dim H^0(\bar{X}_{\bar{J}},
\sO_{\bar{X}_{\bar{J}}}(-S)\otimes
\bigotimes_{j=1}^{n}\sL_{j}^{m_j+2}).
$$
It is independent of the choice of a smooth toroidal
compactification of $X_{\bar J}$ and $h(J,m)=h(J^c,m)$ by Hodge
isometry.
\begin{proposition}\label{dimension formula of middle degree}
Let $\V_m$ be an irreducible non-trivial local system over $X$. Put
$\delta(m)=1$ when $m_1=\cdots=m_n$ is satisfied and otherwise zero.
Then it holds that
$$
\dim H^{n}(X,\V_m)=\delta(m)h+\sum_{I\subset \{1,\cdots,n\}}h(I,m).
$$
Moreover, for $P+Q=|m|+n$,
$$
h^{P,Q}_{n}=\sum_{\stackrel{I\subset \{1,\ldots,n\},}{
|m_I|+|I|=P}}h(I,m),
$$
and $h_n^{|m|+n,|m|+n}=\delta(m)h$. Otherwise $h_n^{P,Q}=0$.
\end{proposition}
\begin{proof}
It follows from Corollary \ref{hodge decomp of inter cohomlogy},
Propositions \ref{byproduct}, \ref{L2 condition is algebraic},
Theorem \ref{mixed hodge structure of cohomology} (ii), \ref{type of
Eisenstein coho} and Lemma \ref{lemma of Zucker}.
\end{proof}

\end{document}